\def\FINAL{1}
\def\ACKN{1}
\let\ge\geqslant
\let\le\leqslant
\begin{document}

\title{Models of curves over DVRs}
\author{Tim Dokchitser}
\thanks{This research is supported by the \hbox{EPSRC} grant EP/M016838/1}
\date{\today}
\begin{abstract}
Let $C$ be a smooth projective curve over a discretely valued field $K$, 
defined by an affine equation $f(x,y)=0$.
We construct a model of $C$ over the ring of integers of $K$ using a toroidal embedding 
associated to the Newton polygon of $f$. We show that under `generic' conditions
it is regular with normal crossings, and determine when it is minimal, the global sections 
of its relative dualising sheaf, and the tame part of the first \'etale cohomology of $C$. 
\end{abstract}

\maketitle

\section{Introduction}

The purpose of this paper is to construct regular models and 
study invariants of arithmetic surfaces using a toric resolution derived
from the defining equation.
Let $K$ be a field with a discrete valuation $\vK$ and residue field~$k$,
and $C/K$ a smooth projective curve 
specified by an affine equation $f(x,y)=0$.
We show that the Newton polytope $\Dv$ of $f$ with respect to~$\vK$,
under a `generic' condition called $\Dv$-regularity, determines explicitly

\begin{itemize}
\item[---]
the minimal regular normal crossings model of $C$ over $O_K$;
\item[---]
whether $C$ and $\Jac C$ have good, semistable and tame reduction;
\item[---]
the action $\Gal(\bar K/K)\acts\H(C_{\bar K},\Q_l)$ when $C$ is tamely ramified, 
and the action on its wild inertia invariants in general, for $l\ne\vchar k$;
\item[---]
a basis of global sections of the relative dualising sheaf;
\item[---]
the reduction map on points from the generic to the special fibre.
\end{itemize}
A regular model is usually constructed by starting with any model over $O_K$, and
repeatedly blowing up at points and components of the special fibre and taking normalisations. 
In effect, the toric resolution replaces repeated blow-ups along coordinate axes, and for 
$\Dv$-regular curves one such resolution is enough to get a good model. 

\subsection{Regular model}
\label{ssintromain}

Our main objects are the Newton polytopes of the defining equation $f = \sum a_{ij}x^iy^j$ 
of the curve $C$ over $K$ and over $O_K$,
$$
\begin{array}{lclrlll}
  \Delta & = & \text{convex hull}\bigl(\>  (i,j) &\bigm|\>a_{ij}\ne 0\>\bigr)  &\subset\R^2,\\[2pt]
  \Dv    & = & \text{lower convex hull}\bigl(\>  (i,j,\vK(a_{ij}))\!\!\! &\bigm|\>a_{ij}\ne 0\>\bigr)  &\subset\R^2\times\R.
\end{array}
$$
(The lower convex hull is those points $Q$ in the convex hull such that $Q-(0,0,\epsilon)$ is not
in it for any $\epsilon>0$.) Thus, $\Delta$ is a 2-dimensional convex polygon, and above every
point $P\in\Delta$ there is a unique point $(P,v(P))\in\Delta_v$. 
The function $v: \Delta\to\R$ is piecewise affine,
and breaks $\Delta$ into 
2-dimensional \emph{$v$-faces} and 1-dimensional \emph{$v$-edges},
the images of faces and edges of the polytope $\Dv$ under the homeomorphic projection to $\Delta$.
We will associate
\begin{itemize}
\item 
denominators $\delta_L$, $\delta_F\in\N$ to every $v$-edge $L$ and $v$-face $F$ (see \ref{notintden}),
\item 
proper 1-dimensional schemes $\bar X_F/k$ to every $v$-face $F$, and\\
finite 0-dimensional schemes $X_L/k$ to every $v$-edge $L$ (see \ref{defres}-\ref{defXFXL}),
\item
slopes $[s_L^1,s_L^2]$ at every $v$-edge $L$ from adjacent $v$-faces (see \ref{defslopes}).
\item
$\Delta(\Z)=\text{interior}(\Delta)\cap\Z^2$, the set of interior integer points.
\end{itemize}

\noindent
We say that $f$ (or $C$) is \emph{$\Delta_v$-regular} essentially%
\footnote{The condition for outer $v$-faces can be made slightly weaker, see \ref{dvreg}.}
\marginpar{Is that true? Check footnote}
if all $\bar X_F, X_L$ are smooth. Under these conditions\marginpar{need ref},
$C$ is the completion of $\{f=0\}\subset\G_m^2$ in the toric variety with fan $\Delta$;
see \S\ref{sBaker}. In particular, the genus of $C$ is $|\Delta(\Z)|$ and
it has a basis of regular differentials
{\scalebox{0.95}{$x^iy^j\frac{dx}{xyf'_y}$}}
indexed by $(i,j)\in\Delta(\Z)$.

As an example, let $\pi\in K$ be a uniformiser, and take a curve (left)
$$
C\colon y^2\!+\!\pi x^3 y\!+\!x^3\!+\!\pi^5=0
\qquad\quad 
\pbox[c]{10cm}{\INTROEXCHART}
\qquad\quad 
\pbox[c]{10cm}{\INTROEXMODEL}
$$
Then $\Delta$ is a quadrangle, with values of $v$ on $\Delta\cap\Z^2$ as above (middle).
It has $v$-faces $F_1$, $F_2$ with $\delta_{F_1}=6$, $\delta_{F_2}=3$, 
\marginpar{and list the slopes, as a displayed formula}
and five $v$-edges (4 outer, 1 inner). 
Here $C$ has genus 2 and is $\Dv$-regular, for any ground field $K$.


\marginpar{Instead say that for any curve we construct a model? And then prove regularity
if $C$ is $\Dv$-regular?} 

In \S\ref{sProof} we construct a proper flat model $\cC_\Delta/O_K$ of $C/K$, using a 
toroidal embedding associated to $\Dv$. For $\Dv$-regular curves the main result is

%

\begin{theorem}[\ref{mainthm1}+\ref{mainthmdiff}]
\label{imainthm1}
If $f$ is $\Dv$-regular, then $\cC_\Delta/O_K$ is a regular model of $C/K$ with strict normal crossings.
Its special fibre $\cC_k/k$ consists~of:

\begin{enumerate}
\item
A complete smooth curve $\bar X_F/k$ of multiplicity $\delta_F$, for every $v$-face~$F$.
\item
For every $v$-edge $L$ with slopes $[s_1^L,s_2^L]$ 
pick $\frac{m_i}{d_i}\in\Q$ so that
$$
  s_1^L=\frac{m_0}{d_0}\!>\!\frac{m_1}{d_1}\!>\!\ldots\!>\!\frac{m_r}{d_r}\!>\!\frac{m_{r+1}}{d_{r+1}}=s_2^L
    \>\>\quad\text{with}\>\>\>
  \scalebox{0.9}{$\left|
  \begin{matrix}
  m_i\!\!\! & m_{i+1} \cr
  d_i\!\!\! & d_{i+1} \cr
  \end{matrix}
  \right|$}=1.
$$
Then $L$ gives $|X_L(\bar k)|$ chains of $\P^1$s of length $r$ from%
\footnote{If $L$ is at the boundary of $\Delta$, it has only one adjacent $v$-face
  and there is no $\bar X_{F_2}$.}
$\bar X_{F_1}$ to $\bar X_{F_2}$ 
and multiplicities $\delta_L d_1,...,\delta_L d_r$.%
\footnote{If $r=0$, this is interpreted as $\bar X_{F_1}$ meeting $\bar X_{F_2}$ transversally 
at $|X_L(\bar k)|$ points in the inner case, and no contribution from $L$ in the outer case.
In the outer case, this happens precisely when $\delta_L=\delta_{F_1}$.}
The absolute Galois group $\Gal(k^s/k)$ permutes the chains through its action on $X_L(\bar k)$.
\end{enumerate}
If $f'_y$ is not identically zero, then the differentials 
$$
  \pi^{\lfloor v(i,j)\rfloor} \omegaij, \qquad (i,j)\in\Delta(\Z)
$$ 
form an $O_K$-basis of global sections of the relative dualising sheaf
$\omega_{\scriptscriptstyle \cC_\Delta/O_K}$.
\end{theorem}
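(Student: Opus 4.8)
The plan is to carry out everything in the toric charts of the model $\cC_\Delta$ constructed in \S\ref{sProof}, where $\Dv$-regularity unwinds into a short list of explicit transversality statements. Recall the shape of that construction: the faces of $\Dv$ --- equivalently the $v$-faces $F$, the $v$-edges $L$, and their vertices --- assemble into a fan in $\R^2\times\R_{\ge 0}$, and subdividing the two-dimensional cones lying over the $v$-edges into unimodular ones (this is exactly the effect of inserting the slopes $\tfrac{m_i}{d_i}$ with consecutive unit determinant, as in the statement) yields a toric scheme $\mathcal T_\Delta$ that is regular, proper and flat over $O_K$, whose special fibre is a union of toric boundary divisors. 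The model $\cC_\Delta$ is the Zariski closure of $C\cap\G_m^2$ in $\mathcal T_\Delta$; it is $O_K$-flat (torsion-free over a DVR), proper (since $\mathcal T_\Delta$ is), and its generic fibre is the completion of $\{f=0\}\subset\G_m^2$ in the toric surface with fan $\Delta$, hence equals $C$ by \S\ref{sBaker} --- so $\cC_\Delta$ is indeed a model of $C$.

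First I would establish regularity and the strict normal crossings property. In each maximal affine chart $U_\sigma$ of $\mathcal T_\Delta$ (a localisation of a monomial $O_K$-algebra, attached to a maximal cone $\sigma$) the ideal of $\cC_\Delta$ is generated by a single transform $f_\sigma$ of $f$, obtained by dividing $f$ by the monomial that selects the face of $\Dv$ dual to $\sigma$, while the toric boundary --- in particular the special fibre --- is a union of coordinate hyperplanes. The key point is that the non-empty intersections of $\cC_\Delta$ with the toric strata of $\mathcal T_\Delta$ are, up to passing to smaller tori, exactly the schemes $\bar X_F$ (over $v$-faces) and $X_L$ (over $v$-edges), all smooth by $\Dv$-regularity, and a chart-by-chart computation then shows that $\{f_\sigma=0\}$ is smooth over $O_K$ away from the boundary and meets every boundary stratum transversally. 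As $\mathcal T_\Delta$ is regular, this forces $\cC_\Delta$ to be regular and the divisor cut on it by the boundary of $\mathcal T_\Delta$, hence the special fibre, to have strict normal crossings. I expect this uniform chart analysis --- matching the combinatorics of the subdivided fan with the algebra of the $f_\sigma$, and in particular controlling the torus-fixed points, where several boundary divisors and the hypersurface all meet --- to be the main technical obstacle.

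Granting regularity and normal crossings, describing the special fibre $\cC_k$ is bookkeeping with the fan. Its irreducible components are the intersections $D_\rho\cap\cC_\Delta$ over the rays $\rho$ whose primitive generator has positive last coordinate, and the multiplicity of such a component in $\mathrm{div}(\pi)$ is exactly that last coordinate. A ray over a $v$-face $F$ contributes $D_\rho\cap\cC_\Delta=\bar X_F$, of multiplicity $\delta_F$. A $v$-edge $L$ produces one ray per subdivision slope $\tfrac{m_i}{d_i}$; the corresponding component is $\P^1_{X_L}$, geometrically $|X_L(\bar k)|$ disjoint copies of $\P^1$, of multiplicity $\delta_L d_i$, and the unit-determinant condition makes consecutive $\P^1$s meet transversally, so the $r$ of them form chains running from $\bar X_{F_1}$ to $\bar X_{F_2}$ (only to $\bar X_{F_1}$ in the outer case); the degenerate case $r=0$, and the outer subcase $\delta_L=\delta_{F_1}$, are read off directly. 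Since $X_L$ is a $k$-scheme, $\Gal(k^s/k)$ permutes the chains through its natural action on $X_L(\bar k)$.

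For the relative dualising sheaf: since $\cC_\Delta$ is a regular (hence Gorenstein) two-dimensional scheme, proper and flat over the DVR $O_K$, the sheaf $\omega_{\cC_\Delta/O_K}$ is invertible, and it restricts on the generic fibre to $\Omega^1_{C/K}$, for which the $\omegaij$, $(i,j)\in\Delta(\Z)$, form a $K$-basis by \S\ref{sBaker}. Adjunction along the Cartier divisor $\cC_\Delta\subset\mathcal T_\Delta$, together with the toric formula for $\omega_{\mathcal T_\Delta/O_K}$ (the structure sheaf twisted by the toric boundary), identifies $\omega_{\cC_\Delta/O_K}$ with the restriction to $\cC_\Delta$ of an explicit, torus-graded invertible sheaf on $\mathcal T_\Delta$. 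Computing its global sections chartwise --- equivalently, through the adjunction sequence and a vanishing statement on $\mathcal T_\Delta$ --- shows that $H^0(\cC_\Delta,\omega_{\cC_\Delta/O_K})$ is torus-graded with each weight-$(i,j)$ component a free rank-one $O_K$-module, and that the integer multiple of $\omegaij$ generating it is the least $a$ with $a\,m_D+\mathrm{ord}_D(\omegaij)\ge 0$ for every special-fibre component $D$ (the multiplicities $\delta_F$ and $\delta_L d_i$ enter here, and the maximum over all $D$ collapses the affine pieces of $v$ to $v$ itself). The arithmetic of these inequalities yields precisely the exponent $\lfloor v(i,j)\rfloor$, so the $\pi^{\lfloor v(i,j)\rfloor}\omegaij$ form an $O_K$-basis; keeping exact track of the adjunction correction along the $\P^1$-chains, which is what turns the naive bound $\lceil v(i,j)\rceil$ into $\lfloor v(i,j)\rfloor$, is the second point that needs care.
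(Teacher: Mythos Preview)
Your treatment of regularity and the special fibre is essentially the paper's own: a toroidal embedding via a fan built from $\Dv$, unimodular subdivision along $v$-edges via the slopes $\tfrac{m_i}{d_i}$, and a chart-by-chart check that the transform $f_\sigma$ cuts the toric strata transversally under $\Dv$-regularity. One factual correction: in the paper's construction the fan $\Sigma$ has no $3$-dimensional cones, so $X_\Sigma$ is flat and separated over $O_K$ but explicitly \emph{not proper}; properness of $\cC_\Delta$ is obtained a posteriori from properness of its special fibre. This does not affect your regularity argument, but it does affect what comes next.

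For the differentials your route genuinely diverges from the paper's, and there is a gap. Your adjunction strategy rests on a ``vanishing statement on $\mathcal T_\Delta$'' and the claim that $H^0(\cC_\Delta,\omega_{\cC_\Delta/O_K})$ is torus-graded by the $\omega_{ij}$. Since $\mathcal T_\Delta$ is not proper, the standard toric vanishing of $H^1(\omega_{\mathcal T_\Delta/O_K})$ is not available, and even if one compactified, the adjunction sequence computes $H^0(\omega_{\cC_\Delta})$ as a quotient by $f\cdot H^0(\omega_{\mathcal T_\Delta})$, which mixes weights unless you also establish $H^0(\omega_{\mathcal T_\Delta})=0$. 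More to the point, the assertion that the $O_K$-lattice $H^0(\omega_{\cC_\Delta/O_K})\subset H^0(\Omega^1_{C/K})$ is diagonal in the basis $\{\omega_{ij}\}$ \emph{is} the saturation statement --- it is not a formality. There is no torus action on $\cC_\Delta$, so this does not come for free.

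The paper handles this quite differently. It first computes the order of each $\omega_{ij}$ along every component $\bar X_F$ directly (Prop.~\ref{difforder}), using the complete-intersection description of $\omega_{\cC_\Delta/O_K}$ in the explicit charts; this shows each $\pi^{\lfloor v(i,j)\rfloor}\omega_{ij}$ is a global section. Saturation is then proved by contradiction: suppose $\tfrac1\pi\sum_{(i,j)\in\Sigma} u_{ij}\,\omega^v_{ij}$ were regular, pick $P\in\Sigma$ with $v(P)-\lfloor v(P)\rfloor$ minimal and a $v$-face $F\ni P$, and observe that the terms of minimal order along $\bar X_F$ must cancel there. By Remark~\ref{difind} the only linear relation among the $\omega_{ij}$ over $\bar F(\Z)$ is the one coming from $\overline{f_F}$ itself, so all vertices of $F$ lie in $\Sigma$. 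This lets one pass to neighbouring $v$-faces without changing the minimal order, and propagating outward eventually forces $\Sigma$ to meet $\partial\Delta$ --- impossible since $\Sigma\subset\Delta(\Z)$. This combinatorial propagation is the substance of the argument and is what your outline is missing.
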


In the example above, the special fibre is shown on the right.
See Theorem \ref{mainthm2} for a slightly more general statement, for arbitrary curves.

\subsection{Reduction}
\label{ssIntroRed}

There is a natural description, in terms of $\Delta_v$,
of the dual graph of the special fibre (\ref{dualgraph}), 
of the minimal regular model with normal crossings (\ref{minmain})
and of the reduction map of points to the special fibre (\ref{redpts}).
From these one can also deduce the following criterion for good, semistable, and tame%
\footnote{By `tame' we mean semistable over some tamely ramified extension 
of $K$; this is automatically the case if $\vchar k=0$ or $\vchar k>2\genus(C)+1$}
reduction for $\Dv$-regular curves and their Jacobians.

Write $\Delta(\Z)^F\subseteq\Delta(\Z)$ for points that are in the interiors of $v$-faces,
and $\Delta(\Z)^L$ for the others (lying on $v$-edges). We let subscripts,
such as $\Delta(\Z)_\Z$ or $\Delta(\Z)^F_{\Z_p}$, indicate that we further restrict to points 
with $v(P)\in\Z$ or $v(P)\in\Z_p$, respectively. (When $p=0$, the latter is interpreted 
as an empty condition). See \S\ref{ssnotation} for other notation.

\begin{theorem}[=\ref{redcond}]
Suppose $C/K$ is $\Dv$-regular, of genus$\,\ge\!1$. Then

\marginpar{$F(\Z)$ undefined yet; not principal $v$-faces}

\begin{itemize}
\item[(1)]
$C$ is $\Dv$-regular over every finite tame extension $K'/K$.
\item[(2)]
$C$ has good reduction $\iff$ $\Delta(\Z)\!=\!F(\Z)$ for some $v$-face $F$ with $\delta_F\!=\!1$.
\item[(3)]
$C$ is semistable $\iff$ every principal $v$-face $F$ (see \ref{defprincipal}) has $\delta_F=1$.
\item[(4)]
$C$ is tame $\iff$ every principal $v$-face $F$ has $\vchar k\nmid\delta_F$.\\
In this case, $C$
is $\Dv$-regular over every finite extension $K'/K$.
\end{itemize}
Suppose $k$ is perfect, and let $J$ be the Jacobian of $C$. Then
\begin{itemize}[resume]
\item[(5)]
$J$ has good reduction  $\iff$  $\Delta(\Z)=\Delta(\Z)^F_{\Z}$.
\item[(6)]
$J$ is semistable  $\iff$  $\Delta(\Z)=\Delta(\Z)_{\Z}$.
\item[(7)]
$J$ is tame  $\iff$ $\Delta(\Z)=\Delta(\Z)_{\Z_p}$, where $p=\vchar k$. In this case,\\
$J$ has potentially good reduction  $\iff$  $\Delta(\Z)^L=\emptyset$, and\\
$J$ has potentially totally toric reduction  $\iff$  $\Delta(\Z)^F=\emptyset$.
\end{itemize}
\end{theorem}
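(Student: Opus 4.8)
The plan is to read off parts (1)--(4) from the explicit description of the minimal regular model in Theorems~\ref{imainthm1} and~\ref{minmain}, and parts (5)--(7) from the description of the inertia action on $\H(C_{\bar K},\Q_l)$ stated in the introduction, combined with the Néron--Ogg--Shafarevich and Grothendieck criteria. The base change underlying (1) and (4) comes first: for a finite tame extension $K'/K$ of ramification degree $e$ (so $\vchar k\nmid e$), the coefficients $a_{ij}$ of $f$ are unchanged and the valuation simply scales, $v_{K'}|_{K^\times}=e\,\vK$. Hence $\Delta$ and its decomposition into $v$-faces and $v$-edges (which depends only on the piecewise affine $v$ up to positive scaling) are literally unchanged; the denominators $\delta_F,\delta_L$ become $\delta_F/\gcd(\delta_F,e)$, $\delta_L/\gcd(\delta_L,e)$, the slopes become $e\,s^L_i$, and the schemes $\bar X_F,X_L$ of \ref{defXFXL} change only by the residue extension $k\subseteq k'$ and extraction of an $e'$-th root with $e'\mid e$. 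In particular they stay smooth (and likewise the weaker outer-$v$-face condition of \ref{dvreg}), so $C$ is $\Dv$-regular over $K'$; this is (1), and it shows that over any tame $K'$ the model $\cC_\Delta$ of \S\ref{sProof} is again the model of Theorems~\ref{imainthm1}, \ref{minmain} with the same components and rescaled multiplicities.

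For (2)--(4) I would use that, for $\genus(C)\ge1$, $C$ has good reduction iff its minimal regular model has smooth special fibre, and is semistable iff that special fibre is reduced (it is already strict normal crossings by \ref{imainthm1}). By \ref{minmain} the special fibre is assembled from the $\bar X_F$ (multiplicity $\delta_F$, genus $|F(\Z)|$ by \S\ref{sBaker}) and the $v$-edge chains (multiplicities $\delta_Ld_i$), with the contributions of non-principal $v$-faces contracted. It is therefore smooth iff a single component $\bar X_F$ remains, which forces $\delta_F=1$ and $|F(\Z)|=\genus(C)=|\Delta(\Z)|$, i.e. $\Delta(\Z)=F(\Z)$: this is (2). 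It is reduced iff every surviving $\bar X_F$ has $\delta_F=1$ and every chain multiplicity is $1$; two adjacent faces with $\delta_F=1$ have integral slopes and force $\delta_L=1$, and one can then route the chain through the intermediate integers (consecutive determinants $1$) so that all $d_i=1$. Thus $C$ is semistable iff every principal $v$-face has $\delta_F=1$, which is (3). By the base change discussion a tame extension of degree $e$ sends $\delta_F\mapsto\delta_F/\gcd(\delta_F,e)$, and this can be made $1$ for all principal $F$ at once precisely when $\vchar k\nmid\delta_F$ for each of them; together with (3) over $K'$ this gives (4), and once $C$ is semistable over a tame $K'$ all relevant denominators are already $1$ there, so no further (possibly wild) ramified base change can reintroduce non-smoothness of the $\bar X_F,X_L$, which is the last clause of (4).

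For the Jacobian, $V_lJ\cong\H(C_{\bar K},\Q_l)^\vee$, and by the introduction the $\Gal(\bar K/K)$-action decomposes $\H(C_{\bar K},\Q_l)$, compatibly with inertia, into $|\Delta(\Z)|$ two-dimensional pieces indexed by $P\in\Delta(\Z)$: on the $P$-piece inertia acts trivially iff $P\in\Delta(\Z)^F$ and $v(P)\in\Z$; non-trivially unipotently iff $P\in\Delta(\Z)^L$ (and $v(P)\in\Z$); through a non-trivial finite character of order the denominator of $v(P)$ if $v(P)\in\Z_p\setminus\Z$; and with non-trivial wild inertia if $v(P)\notin\Z_p$ (where $p=\vchar k$). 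By Néron--Ogg--Shafarevich and Grothendieck's criterion, $J$ has good, semistable, resp. tame reduction iff inertia acts trivially, unipotently, resp. with trivial wild part on $V_lJ$, i.e. iff $\Delta(\Z)=\Delta(\Z)^F_\Z$, $\Delta(\Z)=\Delta(\Z)_\Z$, resp. $\Delta(\Z)=\Delta(\Z)_{\Z_p}$: this is (5), (6), (7). For the last two clauses of (7), passing to an extension over which inertia acts trivially or unipotently, the abelian part is $2|\Delta(\Z)^F|$-dimensional and the toric part $|\Delta(\Z)^L|$-dimensional, so $J$ has potentially good reduction iff $\Delta(\Z)^L=\emptyset$ and potentially totally toric reduction iff $\Delta(\Z)^F=\emptyset$, consistently with the dual-graph picture of \ref{dualgraph} and the identity $\genus=\text{toric rank}+\text{abelian rank}$ for semistable curves. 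The delicate point throughout is (2)--(4): identifying which components of $\cC_\Delta$ survive in the minimal model and checking that reducedness of the surviving fibre is governed precisely by the principal $v$-faces — this, together with the base-change stability of $\Dv$-regularity, is where the real work lies, whereas (5)--(7) are essentially formal once the inertia action from the introduction is in hand.
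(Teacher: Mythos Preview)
Your approach is essentially the paper's own: parts (1)--(4) via the explicit minimal model (Theorems \ref{mainthm1}, \ref{minmain}) and the base-change behaviour of \ref{dvbasechange}, and parts (5)--(7) via the inertia description of Theorem \ref{tamecoh} together with the abelian/toric decomposition of \ref{corabtor}. The paper deduces (5) from \ref{corabtor} rather than directly from N\'eron--Ogg--Shafarevich, but this is a cosmetic difference (and indeed the paper notes the NOS route as an alternative).

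One overstatement to flag: you write that $\H(C_{\bar K},\Q_l)$ decomposes into $|\Delta(\Z)|$ two-dimensional pieces indexed by $P\in\Delta(\Z)$, with the piece for $P$ carrying non-trivial wild inertia when $v(P)\notin\Z_p$. The paper does \emph{not} prove this --- Theorem \ref{tamecoh} only describes $\H(C)^{\Iwild}$ as an $I_K$-module and computes its dimension as $2|\Delta(\Z)_{\Z_p}|$; there is no canonical decomposition of the wild part indexed by points. Fortunately this does not affect your argument: (6) and (7) only need the dimension comparison $\dim\H(C)^{\Iwild}=2|\Delta(\Z)_{\Z_p}|$ versus $2g=2|\Delta(\Z)|$, and (5) then follows from (6) plus the vanishing of the toric part. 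So rephrase that paragraph to invoke only what \ref{tamecoh} actually gives (wild-inertia invariants and their dimension), and the proof goes through as in the paper.
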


\subsection{\'Etale cohomology}
\label{ssIntroEt}

One of the fundamental invariants of a curve $C$ over a global field is its global Galois representation
and the associated $L$-function $L(C,s)$. 
To study it in practice, one needs to understand the corresponding local Galois representations.
They can be quite intricate when $C$ has bad reduction, but for tame $\Dv$-regular curves we can give
a complete description, as follows.

%
%
%
%
%

Let $K$ be complete with perfect residue field~$k$ of characteristic $p\ge 0$. 
Assume that $C/K$ is $\Dv$-regular and fix $l\ne p$. The representation in question is the
\'etale cohomology of $C$ with the action of the absolute Galois group,
$$
  G_K=\Gal(K^s/K) \quad\acts\quad H^1(C) = \H(C_{\bar K},\Q_l).
$$
Let $\Iwild\normal I_K\normal G_K$ be the wild inertia and the inertia subgroups.
It turns out that the action of $I_K$ on the subspace of wild inertia invariants 
$H^1(C)^{\Iwild=1}$ (which is all of $H^1(C)$ if $C$ is tame) 
depends only on $\Dv$, and has the following elementary description.

Fix a uniformiser $\pi$ of $K$. 
Each point $P\in \Delta(\Z)_{\Z_p}$ defines a tame 
character $\chiP: I_K\to\text{\{roots of unity\}}$ by $\sigma\mapsto \sigma(\pi^{v(P)})/\pi^{v(P)}$.
Let $V^{\ab}_{\tame}$, $V^{\toric}_{\tame}$ be the unique continuous representations of $I_K$
over $\Q_l$ that decompose over $\bar\Q_l$ as
$$
  V_{\tame}^{\ab}\>\> \iso_{\scriptscriptstyle\bar\Q_l}\!\!
  \bigoplus_{P\in \Delta(\Z)^F_{\Z_p}} \!\!\!\! (\chiP\oplus\chiPinv), \qquad
  V_{\tame}^{\toric}\>\> \iso_{\scriptscriptstyle\bar\Q_l}\!\!
  \bigoplus_{P\in \Delta(\Z)^L_{\Z_p}} \!\!\!\! \chiP.
$$
%
%
Then there is an isomorphism of $I_K$-modules (Theorem \ref{tamecoh}),
%
\marginpar{$\Sp_2$ needs to be defined. Note that for $V_l$ it may be only used 
  for inertia, otherwise twist it by 1}
$$
  H^1(C)^{\Iwild=1}\>\>\iso\>\> 
  V_{\tame}^{\ab} \>\>\oplus\>\> V_{\tame}^{\toric}\!\tensor\!\Sp_2.
$$
In particular, the dimension of the wild inertia invariants is 
$2|\Delta(\Z)_{\Z_p}|$.

When $k$ is finite,
we describe $H^1(C)^{\Iwild=1}$ as a full $G_K$-represen\-tation.
To a $v$-face $F$ of $\Delta$ we associate a scheme $\bar X_F^{\tame}/O_K$,
that depends only on the coefficients $a_P$ for $P$ along $F$. It is tame,
defined by an equation that has only one $v$-face, and its $H^1$ admits an 
explicit description in terms of point counting (Theorem \ref{onetame}).
Similarly we get schemes $X_L^{\tame}/O_K$ for $v$-edges $L$.
Then there is in isomorphism of $G_K$-representations (see Theorem \ref{tamedec})
$$
  H^1(C)^{\Iwild=1}\ominus\triv\>\>\>\iso\>\> 
    \bigoplus\subsmalltext4{$F$}{$v$-faces} (H^1(\bar X_F^{\tame})\ominus\triv)
      \>\>\>\oplus\>
    \bigoplus\subsmalltext4{$L$ inner}{$v$-edges} (H^0(X_L^{\tame})\tensor\Sp_2).
$$

\subsection{Applications}

As an illustration, we
\marginpar{could mention semistable hyperelliptic curves in M2D2}

\begin{itemize}
\item[---]
recover Tate's algorithm for elliptic curves (\S\ref{sEll}),
\item[---]
determine regular models of Fermat curves $x^p+y^p=1$ over $\Z_p$ and tame extensions
  of $\Z_p$, that seem to be unavailable (\S\ref{sFermat}),
\item[---]
compute regular models and their differentials for some curves from the literature, in an
elementary way (\ref{minremark}, \ref{expss}, \ref{ex188}).
\end{itemize}

Generally, on the computational side our motivation came from determining arithmetic invariants
of curves that enter the Birch--Swinnerton-Dyer conjecture, such as the differentials, conductor
and Tamagawa numbers. 
On the theoretical side, for $\Dv$-regular curves we also recover explicit versions of various
classical results: existence of (minimal) model with normal crossings,
Saito's criterion for wild reduction (\cite[Thm 3]{Saito}; see \cite{Hal} as well),
and Kisin's theorem on the continuity of $l$-adic representations for $H^1$ of curves
in the tame case (see \ref{remkisin}). It also extends some computations of regular models as 
tame quotients \cite{Vie,LorD,CES} to the wild case.
%
%
%

\subsection{Remarks}

(a) Note that Theorem \ref{imainthm1} applies to curves with wild reduction, because the construction 
is `from the bottom' (toroidal embedding) rather than `from the top' (taking a quotient of a semistable
model over some larger field). 
In fact, $\Dv$-regularity is automatic for the most `irregular' reduction types,
the ones where the only points in $\Dv\cap\Z^3$ are vertices of~$\Dv$.
(This is the case with the example above.)
Then the shape of the special fibre of their regular models does not depend 
on the residue characteristic at all. 
\marginpar{$y^5 + p x^2 y^3 + p^2 x^5 y = x^{11} + p^{13}$ (genus 20)}
This explains why for example elliptic curves 
$y^2=x^3+\pi$, $y^2=x^3+\pi^5$,  
$y^2=x^3+\pi x$, $y^2=x^3+\pi^3 x$
are always of Kodaira type II, II*, III, III*, irrespectively of whether $\vchar k$ is 2, 3 or $\ge 5$.

(b) Also, the theorem produces a regular model with normal crossings, which is somewhat more
pleasant than the minimal regular model for classification and computational purposes
(as it can be encoded combinatorially in the dual graph of the special fibre),
and sometimes for theoretical purposes as well; see e.g. \cite{Saito, Hal}.
%

(c) The technique appears to be useful for arithmetic schemes of higher dimension
as well, and over more general local rings. However, we only treat the case of curves over DVRs here, 
as it seems sufficiently interesting and already sufficiently involved.

(d) Although we have chosen to formulate the results for global equations, 
the construction of the regular model is really a local process (in effect 
a version of a repeated blow up). 

(e) In a way, our results for arithmetic surfaces are a natural extension of two classical 
lower-dimensional analogues --- 1-dimensional schemes over a field, and over a DVR.
For a plane curve over a field, Baker's theory computes the genus, points at infinity and differentials 
from a Newton polygon of the defining equation; we review this in \S\ref{sBaker}.
And for univariate $f(x)$ with coefficients in a complete DVR, the Newton polygon of $f$ 
reflects how $f$ factors, in effect forcing a decomposition of $\H[0]$ of the scheme $f=0$;
the results from \S\ref{ssIntroEt} simply extend this to bivariate $f(x,y)$ and their $\H[1]$.

(f) 
Finally, there is an obvious question as to what extent the theory extends to arbitrary curves, 
not just the $\Dv$-regular ones. 
\marginpar{If yes, one could possibly reprove
various fundamental results such as the semistable reduction theorem, existence of 
regular models, and the theorem of Saito.}
It appears~that~it does, but this requires additional work. 
We postpone this to a future paper.

\comment
Remark on 2IV-3. If principal components are permuted by inertia,
there is no locally $\Dv$-regular model. But could upgrade the theorem:
let $C$ be a complete intersection,
$$
  f(x,y,z)=0, \qquad \pi=z^n g(x,y,z), \qquad (n\!>\!0, z\nmid g).
$$
Then $C$ has a proper model obtained from the toroidal embedding of $f$.
It is describable, independent of the presentation of $f$ and regular at the 
components that correspond to faces away from the cone of $g$
(or $z^ng$, perhaps). 
This allows to resolve types like 2IV-3 recursively, and together with 
the normalisation of positive genus components handles general curves.
\endcomment


\newpage

\marginpar{\cite{LiuM} e.g. table on p.160 in tame cases.}

\marginpar{Cf. \cite[9/1.7]{Liu} for normal crossings (NC) models and 
  \cite[9/3.36]{Liu} for existence of minimal NCs.}

\marginpar{As we allow nodes, these are not SNCs?}

\marginpar{E.g. could use to study local rings such as $x^m y^n=\pi^r$ in tame extensions.}

\marginpar{It is an interesting question to extend the results 
  on cohomology and differentials to glued regular models as well.}


\marginpar{Numerology is well-known, see Remark \ref{sternbrocot}.}

\marginpar{Reflects different behaviour for genus 1 deficient curves.}

\subsection{Notation}
\label{ssnotation}

Throughout the paper we use the following notation:

\medskip

\marginpar{$f'_y\ne 0$ means $K(C)/K(x)$ is separable}

\begin{tabular}{llllll}
$K$, $\vK$ & field with a discrete valuation; in \S\ref{sBaker}, $K$ is any field \cr
$O_K$, $k$, $p$ & ring of integers, residue field, $\vchar k$ \cr
$\pi$ & a fixed choice of a uniformiser of $K$\cr
$f$&$=\sum a_{ij}x^i y^j$, equation in $K[x,y]$, not 0 or a monomial\cr
$f'_y$ & derivative of $f$ with respect to $y$; in \S\ref{sBaker}, \S\ref{sDiff} we assume \cr
&$f'_y\ne 0$, equivalently $K(C)/K(x)$ is separable\cr
$C$ & smooth projective curve over $K$ birational to $f=0$\cr
$\Delta$   & Newton polygon of $f$ over $K$ in $\R^2$, as in \S\ref{ssintromain} \cr
$\Delta_v$ & Newton polytope of $f$ over $O_K$ in $\R^3$, as in \S\ref{ssintromain}  \cr
$v$ & piecewise affine function $v: \Delta\to \R$ induced by $\vK$, as in \S\ref{ssintromain} \cr
$\partial$ & boundary (of a convex polygon in $\R^2$)\cr
$L$, $F$ & $v$-edges and $v$-faces of $\Delta$, the images of 1- and 2-dimensional\cr
   & faces of $\Delta_v$ under the homeomorphic projection $\Dv\to\Delta$ \cr
$\Delta(\Z)$ & interior$(\Delta)\cap\>\Z^2$; similarly $L(\Z)$, $F(\Z)$ \cr
$\bar\Delta(\Z)$ & closure$(\Delta)\cap\>\Z^2$; similarly $\bar L(\Z)$, $\bar F(\Z)$ \cr
$\delta_P, \delta_L, \delta_F$ & the denominator of $v(P)$ for $P\in\bar\Delta(\Z)$, and the common \cr
& denominators of $v(P)$ for $P\in \bar L(\Z)$ or $\bar F(\Z)$ (\ref{notintden})\cr
$f|_L$, $f|_F$ & restriction of $f$ to a $v$-edge or a $v$-face (\ref{defres}) \cr
$\overline{f_L}$, $\overline{f_F}$ & reduction to $k[t]$, $k[x,y]$ of the restrictions (\ref{defred})\cr
$X_L$, $X_F$ & affine schemes $\{\overline{f|_L}=0\}\subset\G_{m,k}$ and 
$\{\overline{f|_F}=0\}\subset\G_{m,k}^2$ (\ref{defXFXL})\cr
$\bar X_F$ & completion of $X_F$ with respect to its Newton polygon (\ref{defXFXL})\cr
\rnc{} & flat proper regular $O_K$-model with normal crossings\cr
  & of a curve over $K$ (exists in genus $\ge 1$, see \cite[9/1.7]{Liu})\cr
\mrnc{} & minimal \rnc{} model (unique, see \cite[9/3.36]{Liu})\cr
$\cC_\Delta/O_K$ & flat proper model of $C/K$ constructed in \S\ref{sProof},\cr
& \rnc{} if $C$ is $\Dv$-regular (cf. \ref{mainthm1}, \ref{mainthm2})\cr
$\cC_\Delta^{\min}/O_K$ & \mrnc{} model of a $\Dv$-regular curve (\ref{mainthm1}+\ref{minmain})\cr
$K^s$, $\bar K$ & separable and algebraic closure of $K$ \cr
$G_K$ & $\Gal(K^s/K)$, the absolute Galois group\cr
$I_K\normal G_K$ & the inertia group\cr
$\Iwild\normal I_K$ & the wild inertia subgroup\cr
$\Jac C$ & Jacobian of $C$\cr
$l$ & prime different from $p=\vchar k$ \cr
$\Sp_2$ & standard representation $I_K\to\GL_2(\Q_l)$, $\sigma\mapsto \smallmatrix 1{t(\sigma)}01$, where \cr
& $t: I_K\to\Z_l$ is the $l$-adic tame character (see \cite[4.1.4]{TaN})\cr
$\oplus, \ominus$ & direct sum and difference of representations\cr
$\triv$ & trivial representation\cr
$\G_{m,K}$ & $\Spec K[t,t^{-1}]$, affine line minus the origin \cr
$m_{*j}$, $m_{i*}$ & $j$th column and $i$th row of a matrix $M=(m_{ij})$ \cr 
$\tilde m_{*j}$, $\tilde m_{i*}$ & $j$th column and $i$th row of a matrix $M^{-1}=(\tilde m_{ij})$ \cr 
\end{tabular}

\medskip

\noindent
All representations are finite-dimensional, models are flat and proper, lattices are affine, 
and polygons are convex lattice polygons (vertices in $\Z^2$). 

See Table \ref{glossarytable} 
for examples of curves and 
the special fibres of their \mrnc{} models, with a glossary of notation used in the pictures. 



\begin{table}[!htbp]       
\par\noindent\hskip -1.7cm
\input{glossary.inc}
\par\bigskip
\caption{Examples and notation in the pictures. 
For the sample equations (right column) suppose $\vchar k=0$, say.}
\par\vskip 1cm
\label{glossarytable} 
\end{table}

\newpage

\marginpar{Use twiddles for reduction? Too many bars.}
\marginpar{\rIn}
\marginpar{Volume $\vol$ (or area? Or just ``not a line segment''?).}
\marginpar{Make $v$ into $\vK$ for the field}
\marginpar{Make, somewhat radically, $\cC$ into $C$ and $C$ into $C_K$? But not in Baker?}
\marginpar{0-, 1-, $v$-faces should be of $\Dv$, everywhere. Check.}

\endsection
\section{Baker's theorem}
\label{sBaker}

By a classical result going back to Baker in 1893 \cite{Bak}, under generic
conditions the genus of a plane curve is the number of interior integral 
points in its Newton polygon $\Delta$. Modern versions, over a general field, have 
been proved in \cite[Lemma 3.4]{KWZ}, \cite[Thm 4.2]{BP} and \cite[Thm 2.4]{Bee}.
There are extensions to higher-dimensional varieties and their invariants,
notably over~$\C$ \cite{Kush,Khov,BKK},\cite[\S4]{Bat}; 
see also \cite[Thm 1.3]{DL} over finite fields.
The question which curves admit a `Baker model' is addressed in detail in \cite{CV}.
Here we give a slightly different (and elementary) proof of Baker's theorem.
It describes regular differentials and points at infinity as well, and we will need
the steps later.

%
%
%
%
%

\begin{notation}
\label{notbaker}
Let $K$ be any field. Fix a polynomial, which is not zero or a monomial,
$$
  f = \sum_{i,j} a_{ij}x^iy^j \qquad \in K[x,y].
$$
Recall that the \emph{Newton polygon} $\Delta$ of $f$ is
$$
  \Delta = \text{convex hull}
  \bigl(\>  (i,j) \bigm|\>a_{ij}\ne 0\>\bigr)\subset\R^2.
$$
It is the smallest convex set containing the exponents of non-zero monomials,
and by assumptions it is a convex polygon with at least two vertices. 
Write

\medskip
\qquad
\begin{tabular}{lll}
$\Delta(\Z)$ &=& $\Z^2\>\cap$ interior of $\Delta$,\cr
$\bar\Delta(\Z)$ &=& $\Z^2\cap\Delta$.\cr
\end{tabular}
\smallskip\par\noindent

If $\Delta$ is not a line segment, we call its 1-dimensional faces \emph{edges}
%
We
use `$L\subset\partial\Delta$'
as a shorthand for `for $L$ an edge of $\Delta$'. For such an edge $L$, write
\begin{center}
$\LS\!=\!\LSsup{\Delta}\!=\!$ unique affine function $\Z^2\surjects\Z$ 
with $\LS|_L^{}\!=\!0$ and $\LS|_\Delta^{}\!\ge\! 0$.
\end{center}

\noindent
When $\Delta$ is a line segment, we view it as having 
two edges $L_1$, $L_2$ and $\Delta(\Z)=\emptyset$, and pick $L_i^*$ as above (not unique
in this case) so that $L_1^*=-L_2^*$.
\marginpar{Here edges are closed, but later $v$-stuff is open}

If $L\cap\Z^2$ = $\{(i_0,j_0),...,(i_r,j_r)\}$, ordered\footnote{The 
two possible orders give $f_L(t)$ vs $f_L(1/t)$; this choice does not affect 
anything} along $L$, set
\begin{center}
$
  f_L(t) = \sum_{n=0}^r a_{i_n,j_n} t^n \in K[t].
$
\end{center}
\end{notation}
%



%

\begin{theorem}
\label{thmbaker}
Let $f\!\in\!K[x,y]$, with Newton polygon $\Delta\!\subset\!\R^2$. If
$C_0: f=0$ is a smooth%
\footnote{smooth$=$non-singular over $\bar K$, and square-freeness is also smoothness; 
recall that $\Delta$=line segment is allowed (non-connected curves) but not $\Delta=$point or $\emptyset$
(not a curve)} curve in $\G_{m,K}^2$ and $f_L$ is square-free for every edge $L$, then
\begin{enumerate}
\item 
The non-singular complete curve $C$ birational to~$C_0$ has genus~$|\Delta(\Z)|$.
\item
\marginpar{See \cite{CDV}}
$C$ has a basis of regular differentials, non-vanishing on $C_0$,
$$
  \textstyle\omega_{ij}=\omegaij, \qquad   (i,j)\in\Delta(\Z).
$$
(If $f'_y\!=\!0$, equivalently $K(C)/K(x)$ is inseparable, then swap $x\!\leftrightarrow\!y$.)
\item
\marginpar{$K^s$ or $\bar K$ or $K^s$ or $K$ perfect? Does not matter in this theorem, probably, but matters
  for singular curves}
There is a natural bijection that preserves $\Gal(K^s/K)$-action,
$$
  C(\bar K)\setminus C_0(\bar K) \quad{\buildrel 1:1\over\longleftrightarrow}\quad 
    \coprod_{L\subset\partial\Delta} \text{\{roots of $f_L$ in $\bar K^\times$\}}.
$$
\item
If $P$ corresponds to a root of $f_L$ via (3), then
$$
\begin{array}{llllllll}
  {\rm (4a)} &&  \ord_P x^i y^j &=& \LS(i,j)-\LS(0,0) && \text{for}\>\> (i,j)\in\Z^2;\cr
  {\rm (4b)} &&  \ord_P \omega_{ij} &=& \LS(i,j)-1    && \text{for}\>\> (i,j)\in\Delta(\Z).
\end{array}
$$
\end{enumerate} 
\marginpar{Check whether this or outer-regularity is what is called $\Delta$-regular}
\end{theorem}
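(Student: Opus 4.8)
The plan is to realise the complete curve $C$ as the closure of $C_0$ inside the toric surface $\mathbb{P}_\Delta$ attached to the (normal) fan of $\Delta$, and to read off all four statements from the combinatorics of that compactification together with the adjunction formula. First I would fix the toric surface $X_\Delta$ over $K$ with its torus $\mathbb{G}_{m,K}^2$, whose boundary divisors $D_L$ are indexed by the edges $L\subset\partial\Delta$ (so $\LSsup{\Delta}$ is, up to the lattice normalisation, the order of vanishing along $D_L$ of the torus characters), and let $\bar C_0$ be the Zariski closure of $C_0$. The square-freeness of each $f_L$ is exactly the condition that $\bar C_0$ meets each $D_L$ transversally in $\deg f_L$ distinct points away from the torus-fixed points, so $\bar C_0$ is already smooth along the boundary; combined with smoothness of $C_0$ in the torus this gives that $\bar C_0$ is smooth, hence equals $C$, and that $C\setminus C_0 = \bar C_0\cap\partial X_\Delta = \coprod_L (\bar C_0\cap D_L)$. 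Identifying $\bar C_0\cap D_L$ with the roots of $f_L$ in $\bar K^\times$ — via the parametrisation of $D_L^\circ\cong\mathbb{G}_{m}$ by a primitive vector along $L$ — is a direct local computation and is Galois-equivariant because it is cut out by $f_L\in K[t]$; this proves (3). Statement (4a) then falls out of the same local description: in the local coordinate $t$ at $P$, the function $x^iy^j$ has a zero/pole of order $\LSsup{\Delta}(i,j)$ up to the common shift by $\LSsup{\Delta}(0,0)$ coming from the choice of local uniformiser, because $\LSsup{\Delta}$ is precisely the boundary-order functional for $D_L$.

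For (1) and (2), the cleanest route is adjunction on $X_\Delta$ (after an equivariant resolution of its at-worst cyclic-quotient singularities, which does not meet $\bar C_0$ by the transversality above). One writes $\omega_{ij}=x^iy^j\frac{dx}{xy f'_y}$, notes $\frac{dx}{xy}$ is the standard invariant 2-form-to-1-form residue datum, and computes its divisor on $C$: on the torus $C_0$ it is regular and nonvanishing (this is where $f$ smooth and $f'_y\ne0$, i.e. $K(C)/K(x)$ separable, enters — if $f'_y\equiv 0$ swap $x$ and $y$), and at a boundary point $P$ over the edge $L$ the order of $\frac{dx}{xyf'_y}$ is $-\LSsup{\Delta}(0,0)-1$ by a local computation at $D_L$ (differentiating the defining equation along the toric chart). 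Hence $\ord_P\omega_{ij}=\LSsup{\Delta}(i,j)-1$, which is (4b), and $\omega_{ij}$ is a global regular differential exactly when $\LSsup{\Delta}(i,j)\ge 1$ for every edge $L$, i.e. when $(i,j)$ lies in the interior of $\Delta$, i.e. $(i,j)\in\Delta(\Z)$. The $\omega_{ij}$ are visibly $K$-linearly independent (distinct monomials), so they span a subspace of $H^0(C,\Omega^1)$ of dimension $|\Delta(\Z)|$; since $\dim H^0(C,\Omega^1)=\genus(C)$, statements (1) and (2) follow at once provided one shows $\genus(C)\le|\Delta(\Z)|$ for the reverse inequality. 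That last bound I would get from adjunction: $2\genus(C)-2 = \bar C_0\cdot(\bar C_0+K_{X_\Delta})$ on the resolved toric surface, and this intersection number evaluates combinatorially to $2|\Delta(\Z)|-2$ (essentially Pick's theorem: $\bar C_0^2$ contributes the normalised area and $\bar C_0\cdot K_{X_\Delta}$ the boundary lattice length, and $\#\{\text{boundary lattice points}\}+\#\{\text{interior lattice points}\}-1 = \text{area}+\tfrac12\#\text{boundary}$ rearranges to the claim).

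The main obstacle I anticipate is handling the toric surface's singularities cleanly over a \emph{general} (not algebraically closed, not perfect) field $K$: $X_\Delta$ has cyclic quotient singularities at the torus-fixed points corresponding to vertices of $\Delta$, and to run adjunction one must either resolve them equivariantly (and track that $\bar C_0$ avoids the exceptional locus — which it does, by the boundary transversality, but this needs to be said carefully) or instead argue purely on the normalisation $C$ by a local-at-each-boundary-point computation, summing the local contributions of $dx/(xyf'_y)$ over all roots of all $f_L$ and invoking the genus formula $\deg K_C = 2\genus(C)-2$. I would favour the second, more hands-on approach to keep the field hypotheses minimal: all the needed local computations are in the smooth locus and are insensitive to the ground field, the divisor class computation reduces to a finite sum of the quantities $\LSsup{\Delta}(0,0)$ and the edge lattice lengths, and the combinatorial identity is Pick's theorem. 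The Galois-equivariance in (3) and (4) is then immediate because every construction is defined over $K$.
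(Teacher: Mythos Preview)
Your proposal is correct and lands on essentially the same argument as the paper, though you frame it more abstractly. The paper never invokes the toric surface $X_\Delta$ or adjunction; instead it writes down, for each edge $L$, an explicit $\SL_2(\Z)$ monomial change of coordinates $(X,Y)=(x^my^n,x^{m'}y^{n'})$ under which $f$ becomes $X^*Y^*(f_L(X)+Yh(X,Y))$, giving a smooth chart $C_L\subset\mathbb{G}_m\times\mathbb{A}^1$ with $C_L\setminus C_0=\{\text{roots of }f_L\}$. This is exactly your ``second, more hands-on approach'': it sidesteps the toric singularities entirely (the charts never see the torus-fixed points), so the obstacle you anticipated simply does not arise.

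One small redundancy in your plan: once you have $\ord_P\omega_{ij}=\LSsup{\Delta}(i,j)-1$ at every boundary point, summing over all $P$ gives $\deg K_C=\sum_L(\deg f_L)(\LSsup{\Delta}(i,j)-1)$, and the combinatorial identity (the paper proves it by cutting $\Delta$ into triangles; Pick's theorem works too) shows this equals $2|\Delta(\Z)|-2$. That already forces $\genus(C)=|\Delta(\Z)|$, so there is no need for a separate adjunction argument to bound the genus from above.
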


\begin{proof}
Let $C$ be the non-singular complete curve birational to $C_0$, possibly non-connected.
The statements are easy when $\Delta$ is a line segment 
($C_0=$union of $\G_m$s, $C$=union of $\P^1$s), 
so assume that vol$(\Delta)>0$.
We will cover $C$ by
charts $C_L$, $C_0\injects C_L\injects C$, indexed by the edges $L$ of $\Delta$.

First, let $\phi: (i,j)\mapsto mj-ni+s$ be any affine function $\Z^2\surjects \Z$ 
which is 
non-negative on $\Delta$ and zero on some vertex; so $l=(m,n)$ is a primitive 
generator of the line $L=\phi^{-1}(0)$, going counterclockwise around $\Delta$.%
\footnote{Thus $\phi=L^*$ in the case $L\cap\Delta$=edge}
Write $mn'-m'n=1$ with $m',n'\in\Z$.
In terms of $\XX=x^m y^n, \YY=x^{m'} y^{n'}$,
$$
  f(x,y) = \XX^? \YY^? \bigl(f_L(\XX) + \YY h(\XX,\YY)\bigr), \qquad h\in K[\XX^{\pm1},\YY].
$$
The univariate polynomial $f_L$ is a non-zero constant 
when $L\cap\Delta$=vertex, and is separable (by assumption) of positive degree 
when $L\cap\Delta$=edge. In both cases, 
$$
  C_L: f_L(\XX) + \YY h(\XX,\YY) = 0
$$
is a smooth curve in $\G_m\times\A^1$ with $C_L\cap \G_m^2= C_0$;
%
so $
  C_0\injects C_L\injects C
$.
The $\bar K$-points in $C_L\setminus C_0$ have $\YY=0$ and correspond to the roots of $f_L$
in $(K^s)^\times$, with the same Galois action. At these points,

\smallskip

\begin{tabular}{llllll}
\qquad&$(\XX=)$ & $x^m y^n$     & is regular non-vanishing, \cr
\qquad&$(\YY=)$ & $x^{m'} y^{n'}$ & is regular vanishing. \cr
\end{tabular}

Conversely, pick $P\in C\setminus C_0$. Either $x$ or $y$ has a zero or
a pole at $P$,~so
$$
  (i,j) \mapsto \ord_P x^i y^j 
$$
is a non-trivial linear map $\Z^2\to \Z$. It has a rank 1 kernel spanned by some primitive 
vector $(b,-a)$. Changing its sign if necessary, it follows from the above construction that $P$ is 
`visible' on $C_L$ obtained from $\phi: (i,j)\mapsto ai+bj+c$, for a unique $c$. Finally,
$$
  \LS(i,j)=ai+bj+c = \ord_P x^i y^j + c, \qquad \LS(0,0)=c.
$$ 
This proves (3) and (4a). 
%
%

(4b) Suppose $P$ corresponds to a root of $f_L$ via (3). 


It suffices to prove $\ord_P \omega_{ij}=\LS(i,j)-1$ 
for \emph{one} $(i,j)\in\Z^2$
and use (4a) 
to get all $(i,j)$. By Lemma \ref{diffGm}, this claim is 
invariant under $\SL_2(\Z)\subset\Aut\G_m^2$, reducing to the case
$$
  L\subset\text{$y$-axis}, \quad C_0: f_L(y)+xG(x,y)=0,
    \quad \omega_{11}=\tfrac{dx}{f_L'+xG'_y}.
$$
Since $f_L(y)$ is assumed to be separable and $P\in\text{$y$-axis}$,
the denominator of $\omega_{11}$ does not vanish at $P$.
Because $dx$ is regular non-vanishing on $\A^1\times\G_m$, we get
$\ord_P\omega_{11}=0$, as required.

%
%

(1), (2)
The total degree of any $\omega_{ij}$ is the sum of
$
  \ord_P \omega_{ij} = \LS(i,j)-1
$
over all $L\subset\partial\Delta$, taken with multiplicity $\deg f_L$. 
An elementary calculation (Lemma \ref{lemphisum}) shows that 
this sum is $2|\Delta(\Z)|-2$. This proves (1),
as the degree of a differential form on a complete genus $g$ curve is $2g-2$.
Finally, the $\omega_{ij}$ with $(i,j)\in\Delta(\Z)$ form a basis of regular
differentials, as there are~$g$ of them and 
there cannot be a linear relation between monomials $x^i y^j$ of degree $<\deg f$. 
(See Remark \ref{difind} for a slightly stronger statement.)
\end{proof}

\begin{remark}
\label{difind}
If $f=\sum a_{ij}x^i y^j$ satisfies the conditions of the theorem, 
the differentials
$\omega_{ij}$ with $(i,j) \in \bar\Delta(\Z)$
have exactly one linear relation between them up to scalars%
\footnote{It is not hard to see that this is a basis of differentials that are regular on $C_0$ 
  and have at most simple poles on $C\setminus C_0$, though we will not need this.}%
, namely $\sum a_{ij} \omega_{ij}=0$.
This is clear from the minimality (=square-freeness) of the defining equation $f=0$.
\marginpar{In particular, removing either a vertex or an edge from $\Delta\cap \Z^2$ 
results in linearly independent differentials.}
\end{remark}

\begin{example}
\label{ArkRab}
The following are genus 0 curves, with $\Delta(\Z)=\emptyset$:

\medskip
\qquad\begin{tikzpicture}
\dotgrid3 0024
\draw[-] (G11)--(G13);
\dotgrid3 40{10}3
\draw[-] (G51)--(G91)--(G52)--(G51);
\dotgrid3 {12}{0}{16}{4}
\draw[-] (G131)--(G151)--(G133)--(G131);
\dotgrid3 {18}{0}{21}{3}
\draw[-] (G191)--(G201)--(G202)--(G192)--(G191);
\dotgrid3 {23}{0}{29}{3}
\draw[-] (G241)--(G281)--(G262)--(G242)--(G241);
\end{tikzpicture}

\begin{center}
Example: Newton polygons of genus 0 curves (say char $K\nmid 2mn$)\\
$y^n=1$, \ $y=x^n\!+\!1$, \ $x^2\!+\!y^2=1$, \ $xy\!+\!x\!+\!y=1$, \ $x^n\!-\!2=y(x^m\!-\!1)$
\end{center}

\noindent
Conversely, Rabinowitz \cite[Thm~1]{Rab}, extending Arkinstall \cite[Lemma~1]{Ark} shows
that a convex lattice polygon $\Delta\subset\R^2$ with $\Delta(\Z)=\emptyset$ is 
$\GL_2(\Z)$-equivalent to one of the pictures above: a line segment, a right triangle 
with legs $1,n$ or $2,2$, or a trapezium of height 1 (right-angled as above, if desired).
\end{example}

\begin{example} 
Curves of genus $\le 4$ always have an equation satisfying Baker's theorem 
if $K$ is large enough, but not in general in higher genus \cite{CV}.
For instance, in genus 3, every smooth projective curve $C/K$ is 
either hyperelliptic or a plane quartic (but not both). 
If, say, $K=\bar K$, then $C$ has an equation $f=0$ with $\Delta$ either

\medskip
\begin{center}
\begin{tikzpicture}
\dotgrid4 0066
\draw[-] (G11)--(G51)--(G15)--(G11);
\node at (G22) {$\scriptstyle\bullet$};
\node at (G23) {$\scriptstyle\bullet$};
\node at (G32) {$\scriptstyle\bullet$};
\dotgrid{2.66} {9}0{19}4
\node at (2.5,0.8) {or};
\draw[-] (G101)--(G181)--(G103)--(G101);
\node at (G112) {$\scriptstyle\bullet$};
\node at (G122) {$\scriptstyle\bullet$};
\node at (G132) {$\scriptstyle\bullet$};
\end{tikzpicture}\\
\end{center}


\noindent
A basis of regular differentials is $\dxfy, x\dxfy, y\dxfy$ and 
$\dxfy, x\dxfy, x^2\dxfy$, respectively.
Similarly, in genus$\,\le\!2$ all curves have hyperelliptic models when $K=\bar K$,  
and there are seven (disjoint) types in genus 4 \cite[\S7]{CV}.
\marginpar{Regularity in $\G_m^2$ can always be satisfied, 
possibly over any field, with nodes 
at the boundary (or interior if one prefers).
Or locally, if nodes are rational (and $K$ is perfect?)}
\end{example}

\marginpar{
If $x^m y^n$ is a monomial, and $\vchar K\nmid mn$,
note on restriction to a sublattice? Need later that $\Dv$-regularity is stable
in tame covers.
}

%
%
%
%

\begin{remark}
\label{rmkbaker}
Let $C_0: f=0$ be an equation in $\G_{m,K}^2$, with $\vol(\Delta)>0$.

(a) If $C_0$ satisfies the conditions of Theorem \ref{thmbaker},
the proof shows that the associated smooth complete curve $C$ is covered by charts
\begin{equation}\label{Ccover}
  C = \bigcup_{L\subset\partial\Delta} C_L, \qquad C_L\,\injects\,\G_m\!\times\!\A^1\,\text{ closed}.
\end{equation}
These $\G_m\!\times\!\A^1$ cover, up to a finite set, 
a toric variety~$\TD$ with polytope $\Delta$, and $C$ 
is simply~the closure of $C_0$ in $\TD$.
It is often called the toric resolution of $C_0$ on $\TD$ (see \cite[\S2.1]{CDV},
\cite{Koe}, \cite{Cas})\marginpar{And other refs?} or tropical compactification \cite{Tev}. 
Two standard examples are

\begin{tabular}{l@{\qquad}l@{\quad$\supset$\quad}l}
\hbox to 5em{\hfill} $\Delta$ & $\TD$ & \hbox to 3em{\hfill} $C$ \cr
\hline
triangle on $(0,0)$, $(n,0)$, $(0,n)$ & $\P^2$ & curve of degree $n$,\Tcr
rectangle $[n,0]\times [0,m]$ & $\P^1\times\P^1$ & curve of degree $(m,n).$\cr
\end{tabular}

\smallskip
\noindent
(b) The theorem constructs $C_L$ as in \eqref{Ccover} and
the closure $C\subset \TD$ of $C_0$ for \emph{any} $f$ with Newton polygon $\Delta$, whether
$C_0$ is smooth or not. We call this scheme the \emph{completion $C_0^\Delta$ 
of $C_0$ with respect to its Newton polygon}.
Since $\TD$ is a complete variety, $C_0^\Delta$ is a proper scheme over $K$.

\smallskip
\noindent
(c) When $C_0$ is smooth, 
the squarefreeness condition in Baker's theorem to ensure smoothness of $C$ is sufficient
but not quite necessary. We say that $C_0$ is \emph{outer regular}
at an edge $L$ of $\Delta$ if $C_L$ is smooth. 

When $C_0$ is outer regular at all $L$, in other words when $C_0^\Delta$ is smooth,
the curve $C_0$ is often called nondegenerate with respect to $\Delta$ \cite[\S2, Def. 1]{CDV}. 
For example, $(x-1)^2+(y-1)^2=1$ is smooth in $\G_{m,\C}^2$,
and its closure in $\TD=\P^2_\C$ is smooth as well. The restriction of $f$
to the two edges of $\Delta$ on the coordinate axes is not squarefree, but $C_0$ is 
outer regular there.

\smallskip
\noindent
(d) Fix $\Delta$ with $\vol(\Delta)>0$ and let $V=\{\text{vertices of $\Delta$}\}$.
All $f\in\bar K[x,y]$ with Newton polygon $\Delta$
form a family $\cC\subset \cP\times\TD$ with a parameter variety
$\cP\iso \A^{\bar\Delta(\Z)\setminus V}\times\G_m^{V}$, flat over $\cP$.
Its generic member $f=0$ is smooth\footnote{This is subtle when $\vchar K>0$ 
  and fails in higher dimensions \cite[\S2, below Cor.\,1]{CDV}} 
\cite[\S2,\,Prop~1]{CDV}, \hbox{irreducible} (dimension count), and hence connected.
Consequently, \emph{any} $C_0$ with Newton polygon $\Delta$ is connected
(Zariski connectedness), and 
\begin{center}
  $C_0^\Delta$ has arithmetic genus $|\Delta(\Z)|$,
\end{center}
since arithmetic genus is constant in flat families  \cite[III.9.13]{Har}.
In particular, when $C_0$ is an integral scheme and so defines a curve, we have
\begin{center}
  geometric genus of $C_0$ $\>\>\le\>\>$ $|\Delta(\Z)|$ \qquad (Baker's inequality).
\end{center}
\end{remark}


\marginpar{
Singularities on a curve of arithmetic genus 0: aren't they automatically resolvable? 
Perhaps using models of P1's (always trees, same multiplcity etc.?)
}



\endsection
\section{Setup and main theorem}
\label{sRM}

From now on, $K$ is a field with a discrete valuation $v: K^\times \surjects \Z$,
ring of integers $O_K$,
uniformiser $\pi$ and residue field $k$. 
As in \S\ref{ssintromain}, fix 
$$
  f = \sum_{i,j} a_{ij}x^iy^j  \qquad \quad \in K[x,y],
$$
and consider its Newton polytopes over $K$ and over $O_K$,
$$
\begin{array}{lclrlll}
  \Delta & = & \text{convex hull}\bigl(\>  (i,j) &\bigm|\>a_{ij}\ne 0\>\bigr)  &\subset\R^2,\\[2pt]
  \Dv    & = & \text{lower convex hull}\bigl(\>  (i,j,\vK(a_{ij}))\!\!\! &\bigm|\>a_{ij}\ne 0\>\bigr)  &\subset\R^2\times\R.
\end{array}
$$
Assume from now on that $\vol(\Delta)\!>\!0$.
Above every point $P\in\Delta$ there is a unique point $(P,v(P))\in\Delta_v$. 
This defines a piecewise affine function $v: \Delta\to\R$, and
the pair $(\Delta,v)$ determines $\Dv$. 

%

\let\tD\Dv

\begin{definition}[Faces]
\label{deffaces} 
The images of the 0-, 1- and 2-dimensional (open) faces of the polytope $\tD$ under the 
homeomorphic projection $\tD\to\Delta$ are called 
\emph{$v$-vertices},
\emph{$v$-edges} and
\emph{$v$-faces} of $\Delta$. 
Thus, $v$-vertices are points in $\Z^2$, 
$v$-edges (denoted~$L$) are homeomorphic to an open interval,
and $v$-faces (denoted $F$) to an open disk. 
\marginpar{We will always have fixed $f$, $\Dv$ in mind, and say `$i$-faces' 
meaning $i$-faces of $\Delta$ with respect to $v$}
\end{definition}

\begin{notation}[Integer points, denominators] 
\label{notintden}
For $v$-edges and $v$-faces write 
$$
  L(\Z)=L\cap\Z^2, \qquad F(\Z)=F\cap\Z^2.
$$ 
As before, 
$\Delta(\Z)$ = (interior of $\Delta)\cap\Z^2$, and we write $\bar L(\Z)$, $\bar F(\Z)$, $\bar \Delta(\Z)$
to include points on the boundary. We decompose $\Delta(\Z)$ into
$$
\begin{array}{llllllllllll}
\Delta(\Z)^F &=& \bigl\{P\in\Delta(\Z) \bigm| P \in    F(\Z)\text{ for some $v$-face $F$}\bigr\},\\[2pt]
\Delta(\Z)^L &=& \bigl\{P\in\Delta(\Z) \bigm| P \notin F(\Z)\text{ for any  $v$-face $F$}\bigr\}.\\[1pt]
\end{array}
$$
We also use subscripts to restrict to points with $v(P)$ in a given set, such as
%
$$
\begin{array}{llllllllllll}
F(\Z)_{\Z_p} &=& \bigl\{P\in F(\Z) \bigm| p\nmid\den(v(P)) \bigr\}
             &=& \bigl\{P\in F(\Z) \bigm| v(P)\in\Z_p \bigr\}.
\end{array}
$$
(When $p=0$, the above is interpreted as an empty condition.)
We write $\Delta(\Z)^F_{\Z}$ etc. for multiple constraints. 
The \emph{denominator} $\delta_\lambda$ of a $v$-edge or a $v$-face $\lambda$ is, equivalently 
(see Lemma \ref{lemdelta} for (1)$\Leftrightarrow$(3)),
\begin{enumerate}
\item 
common denominator of $v(P)$ for $P\in\bar \lambda(\Z)$;
\item
smallest $m\ge 1$ for which $\bar \lambda(\Z)_{\oneoverZ{m}}=\bar \lambda(\Z)$;
\item
index of the affine lattice $\Lambda_\lambda$ spanned by $\bar \lambda(\Z)$ in its saturation in~$\Z^2$.
\end{enumerate}
If $F$ is a $v$-face, and we write 
\begin{center}
$v_F$ = unique extension of $v|_F$ to an affine function $\Z^2\to\Q$,
\end{center}
then $\Lambda_F=v_F^{-1}(\Z)$ and $\Z^2/\Lambda_F$ is cyclic of order $\delta_F$, 
again by Lemma \ref{lemdelta}. 
\end{notation}

\begin{example}
\label{exc1}
Let $C: f=0$ be the genus 4 curve defined by
$$
  f\>=\>y^3+\pi^2x^2y^2+y^2+\pi^3x^6+\pi x^3+\pi^3.
$$  
Here $\Dv$ has $v$-faces $F_1,F_2,F_3$ 
with $\delta_F$=3,3,6,
$v$-edges with $\delta_L$=1,1,1,2,3,3,3
and five $v$-vertices (from the terms of $f$ except for $\pi^2x^2y^2$ that has too high valuation).
Below (left) is a picture of $\Delta$ broken into $v$-faces, and 
the values of $v$ on $\bar\Delta(\Z)$. 
%
`Capital' ones indicate that the corresponding coefficient of $f$ has exactly 
that valuation (and not larger):
%
\smallskip
\noindent
\marginpar{3D picture on the right?}
\marginpar{Into introduction, explaining the results? \'Etale cohomology,
 differentials, $\vchar k\ne 3$ and what happens when $\vchar k=3$ with
 shifts, Tate's algorithm and glueing}

\EXTHREEMODEL

\noindent
In this example, $|\Delta(\Z)|=4$, with 
$|\Delta(\Z)^F|=|\Delta(\Z)^L|=2$.
%
\end{example}

To produce the regular model
and describe the equations for the components of the special fibre (picture on the right),
we need to extract monomials from $f$ that come from a given face, restrict them to a sublattice,
and reduce the resulting equation:

%
%
%

\begin{definition}[Restriction]
\label{defres}
Let 
$g=\sum_{\ii\in\Z^n} c_\ii \xx^\ii\!\in\!K[\xx]$
be a polynomial in $n$ variables
$\xx\!=\!(x_1,...,x_n)$, and $\emptyset\ne S\subset\Z^n$. Take
$\Lambda$ to be the smallest affine lattice with $S\subset\Lambda\subset\Z^n$,
say of rank $r$. 
Choose an isomorphism $\phi: \Z^r\to\Lambda$, write $\yy\!=\!(y_1,...,y_r)$ 
and define the \emph{restriction}
$$
  g|_S = \sum_{\ii\in\phi^{-1}(S)} c_{\phi(\ii)} \yy^\ii.
$$
Different choices of $\phi$ change $g|_S$ by a $\GL_r(\Z)$-transformations 
of the variables and multiplication by monomials. 
For a $v$-vertex/edge/face $\lambda$,~set
$$
  f|_\lambda = f|_{\bar \lambda(\Z)_{\Z}}.
$$
\end{definition}

\begin{definition}[Reduction]
\label{defred}
Suppose $h\in K[x,y]$, and
there are $c,m,n\in\Z$ such that $\tilde h(x,y)=\pi^c h(\pi^m x,\pi^n y)$ 
has coefficients in $O_K$, and $h\in K[x,y]$ and $\tilde h\mod\pi\in k[x,y]$
have the same Newton polygon. Then we say that {$\tilde h\mod\pi$} 
is the \emph{reduction} of $h$, and denote it $\bar h$.
\end{definition}

\begin{example}
Let $f=y^2-x^3-\pi$, and $F$ the unique $v$-face of $\Delta$.
In the notation of \ref{defres} and \ref{defred}, the lattice
$\Lambda=\Z(3,0)+\Z(0,2)$ has index $\delta_F=6$ in $\Z^2$,
and $f|_F=f|_{\bar F(\Z)_{\Z}}=f|_\Lambda$ and its reduction $\overline{f|_F}$ are as follows:

\begin{center}
\EXRESTRICTION
\end{center}
\end{example}

\begin{definition}[Components $X_F$, $X_L$]
\label{defXFXL}
In the notation of \ref{defres}, \ref{defred}, define $k$-schemes
\begin{enumerate}
\item
$X_L\colon\{\overline{f|_L}=0\}\subset\G_{m,k}$ for each $v$-edge $L$ of $\Delta$,
\item
$X_F\colon\{\overline{f|_F}=0\}\subset\G_{m,k}^2$ for each $v$-face $F$ of $\Delta$,
\item
$\bar X_F$ = completion of $X_F$ with respect to its Newton polygon (\ref{rmkbaker}b); 
it is a geometrically connected scheme, proper over $k$ (\ref{rmkbaker}d).
\end{enumerate}
The vertices and edges of the Newton polygon of $\overline{f_F}$ 
correspond to those of~$F$ (cf. example above), and 
so there is a bijection from Baker's Theorem,
\begin{equation}
\label{xfpts}
  \bar X_F(\bar k)\setminus X_F(\bar k) \quad \longleftrightarrow \quad
  \coprod_{L\subset\partial F} X_L(\bar k).
\end{equation}
%
%
%
\end{definition}

We can now define $\Dv$-regularity. Modulo one allowed exception, 
it is simply saying that all $X_F$ need to satisfy Baker's Theorem assumptions:

\begin{definition}[$\Dv$-regularity]
\label{dvreg}
We say that $C$ (or $f$) is \emph{$\Dv$-regular} if all $X_F$, $X_L$ are smooth over $k$, 
\marginpar{Outer not defined yet}%
except that for $v$-edges $L\subset\partial F$ with $L\subset\partial\Delta$ and 
$\delta_L=\delta_F$, 
we drop the assumption that $X_L$ is smooth but require $\bar X_F$~to be 
outer regular at $L$, i.e. smooth 
at the points that correspond to $L$ via~\eqref{xfpts}.

It is not hard to see that regularity does not depend on the choice of $\pi$ and those 
made in \ref{defres}.
\end{definition}

\marginpar{By Baker, implies smoothness of the generic fibre? Was used in regularity proof.}

\begin{example}
\label{exc2}
The curve $C/K$ of Example \ref{exc1} 
is $\Dv$-regular iff $\vchar k\!\ne\!3$. 
(There is only one $v$-edge to check, all other restrictions are linear). 
%
%
%
\end{example}

\marginpar{Cannot write $F_1\cap F_2$ because $v$-faces are open. Check.}

\begin{remark}[Base change]
\label{dvbasechange}
Suppose $K'/K$ is an extension of discretely valued fields of ramification degree~$e$, 
with residue field extension $k'/k$.

For a $v$-face $F$,
write $X_F'$ for the scheme $X_F$ computed for $f$ as an element of $K'[x,y]$.
Let $\Lambda_F$ be the affine lattice spanned by $\bar F(\Z)_{\Z}$, 
so that $\Z^2/\Lambda_F$ is cyclic of order $\delta_F$ (see \ref{notintden}).
Let
$\Lambda_F\subseteq\Lambda_F'\subseteq\Z^2$ be  
the unique intermediate lattice with $(\Lambda_F':\Lambda_F)=\gcd(\delta_F,e)$. Let
$\phi: \G_{m,k'}^2=\Spec k'[\Lambda_F']\to \Spec k[\Lambda_F]=\G_{m,k}^2$ be the corresponding 
cover. Then 
$$ 
  X'_F \iso X_F \times_{\G_{m,k}^2,\phi} \G_{m,k'}^2,
$$
\marginpar{Heavy notation, make $X'$; and check outer regular condition}
and a similar statement holds for $v$-edges. Thus,
\begin{enumerate}
\item 
If $\gcd(\delta_F,e)=1$, then 
$X_F/k$ smooth $\iff X'_F$ is.
\item
If $p\nmid\gcd(\delta_F,e)$, then $\phi$ is \'etale, so
$X_F/k$ smooth $\Rightarrow X'_F$~is.
\item
If $p\nmid\gcd(\delta_F,e)$ for every $v$-face $F$ of $\Delta$, e.g. $K'/K$ is tame, then
\begin{center}
  $C/K$ is $\Dv$-regular \quad $\implies$ \quad $C/K'$ is $\Dv$-regular.
\end{center}
\end{enumerate}
\end{remark}


Finally, we need slopes of two $v$-faces $F_1$, $F_2$ meeting at a $v$-edge $L$.
They will control the chains of $\P^1$s between $\bar X_{F_1}$ and $\bar X_{F_2}$:

\begin{definition}[Slopes]
\label{defslopes}
\marginpar{Fix choices once and for all. This has the effect of making the 
  dual graph directed. Note on changing direction by reflecting $P$ and swapping 
  $F_1$ and $F_2$}
Every $v$-edge $L$ is either \emph{inner}, bounding two $v$-faces, say $F_1$ and $F_2$, 
or \emph{outer}, bounding $F_1$ only. Choose $P_0, P_1\in\Z^2$ with $\LSsup{F_1}(P_0)=0$,
$\LSsup{F_1}(P_1)=1$ (see \ref{notbaker}).
The \emph{slopes $[s_1^L,s_2^L]$ at $L$} are
$$
  s_1^L = \delta_L\>(v_1(P_1)\!-\!v_1(P_0)),  \quad\>\> s_2^L = 
            \bigleftchoice{\delta_L\>(v_2(P_1)\!-\!v_2(P_0))}{\text{for $L$ inner,}}
                      {\lfloor s_1^L-1\rfloor}{\text{for $L$ outer,}}
$$
where $v_i$ is the unique affine function $\Z^2\to\Q$ that agrees with $v$ on $F_i$.

\smallskip

In \S\ref{sProof} from $\Dv$ we construct a scheme $\cC_\Delta/O_K$. The reader is invited
to skip the construction completely since regularity and the special fibre of $\cC_\Delta$
can be described solely in terms of the above data. The results are:

\marginpar{How the choice of $P$ affects what}
\end{definition}

\begin{theorem}[Model in the $\Dv$-regular case]
\label{mainthm1}
Suppose $C: f=0$ is $\Dv$-regular.
Then $\cC_\Delta/O_K$ is a regular model of $C/K$
with strict normal crossings. Its special fibre $\cC_k/k$ is as follows:
\begin{enumerate}
\item 
Every $v$-face $F$ of $\Delta$ gives a complete smooth curve $\bar X_F/k$ of multiplicity $\delta_F$
and genus $|F(\Z)_{\Z}|$.
\item
For every $v$-edge $L$ with slopes $[s_1^L,s_2^L]$ 
pick $\frac{m_i}{d_i}\in\Q$ so that
$$
  s_1^L=\frac{m_0}{d_0}\!>\!\frac{m_1}{d_1}\!>\!\ldots\!>\!\frac{m_r}{d_r}\!>\!\frac{m_{r+1}}{d_{r+1}}=s_2^L
    \>\>\quad\text{with}\>\>\>
  \scalebox{0.9}{$\left|
  \begin{matrix}
  m_i\!\!\! & m_{i+1} \cr
  d_i\!\!\! & d_{i+1} \cr
  \end{matrix}
  \right|$}=1.
$$
Then $L$ gives $|X_L(\bar k)|$ chains of $\P^1$s of length $r$ from $\bar X_{F_1}$ to $\bar X_{F_2}$ 
(and open-ended in the outer case) and multiplicities $\delta_L d_1,...,\delta_L d_r$.%
\footnote{If $r=0$, this is interpreted as $\bar X_{F_1}$ meeting $\bar X_{F_2}$ transversally 
at $|X_L(\bar k)|$ points in the inner case, and no contribution from $L$ in the outer case.
In the outer case, this happens precisely when $\delta_L=\delta_{F_1}$.}
The group $G_k$ permutes the chains by its natural action on $X_L(\bar k)$.
\end{enumerate}
\end{theorem}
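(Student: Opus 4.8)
The plan is to build $\cC_\Delta$ explicitly from a toroidal embedding attached to $\Dv$, and then read off the special fibre locally. First I would set up the combinatorics: the graph of $v$ over $\Delta$, i.e. the polytope $\Dv\subset\R^3$, gives a fan in $\R^3$ (the normal fan of the unbounded polyhedron $\{(i,j,w):w\ge v(i,j)\}$), and I would form the associated toric $O_K$-scheme, with the extra coordinate recording $\vK$. Concretely, for each vertex, $v$-edge, and $v$-face of $\Delta$ one gets an affine toric chart $\Spec O_K[\text{monomials}]$; the curve $\cC_\Delta$ is the closure of $C\cap\G_{m,K}^2$ in this scheme. Each chart comes with natural coordinates $\XX,\YY$ adapted to the face (as in the proof of Baker's theorem, but now with a $\pi$-coordinate), and in such a chart the equation of $C$ becomes, after clearing a monomial, $f|_\lambda(\XX) + (\text{higher order in }\YY,\pi) = 0$, whose reduction mod the relevant local parameter recovers $\overline{f|_\lambda}$. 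The key is to choose the fan so that: (a) on the chart for a $v$-face $F$ the special fibre equation is $\overline{f|_F}$, realizing $\bar X_F$, with multiplicity $\delta_F$ coming from the index of $\Lambda_F$ in $\Z^2$ (equivalently the denominator of $v$ on $F$); (b) on the chart for a $v$-edge $L$ with non-integral slopes, I would triangulate the corresponding 2-cone in the fan using the $\frac{m_i}{d_i}$ from the continued-fraction/Stern–Brocot data in the statement — each subdivision ray $\binom{d_i}{m_i}$ produces a $\P^1$ in the chain, the unimodularity condition $\left|\begin{smallmatrix}m_i & m_{i+1}\\ d_i & d_{i+1}\end{smallmatrix}\right|=1$ makes consecutive rays span a smooth (unimodular) cone so the $\P^1$s meet transversally, and the multiplicity of the $i$th $\P^1$ is $\delta_L d_i$ because the ray has $\pi$-order $\delta_L d_i$; the number $|X_L(\bar k)|$ of chains equals the number of boundary points of $\bar X_F$ lying over $L$, via the bijection \eqref{xfpts}.

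Next I would prove regularity and strict normal crossings. This is a local statement at each point of $\cC_k$. Away from the toric boundary of each chart this is exactly $\Dv$-regularity: smoothness of $X_F$ (resp.\ $X_L$) makes $\cC_\Delta$ smooth there, and along the intersection of two such strata one checks the crossing is transverse using the unimodularity of the fan. At the "edge" charts the model is by construction a sub-toric scheme of a regular toric $O_K$-scheme (regularity of the ambient toric scheme is the unimodularity of each cone in the subdivided fan), and one checks that the curve is a Cartier divisor meeting the toric strata transversally; here the outer-edge exception with $\delta_L=\delta_F$ is where $\bar X_F$ need only be *outer regular* rather than $X_L$ smooth, and this is precisely the condition that $\cC_\Delta$ be regular at the finitely many points over that edge. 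The slope convention ($s_2^L=\lfloor s_1^L-1\rfloor$ in the outer case) is exactly chosen so that the chain is resolved all the way to a regular model with no remaining singular toric point. One then verifies $\cC_\Delta$ is proper (the toric $O_K$-scheme is proper since $\Delta$ is bounded) and flat over $O_K$ with generic fibre $C$ (it contains $C\cap\G_m^2$ as a dense open, and $C$ is the completion of that in $\TD$ by Baker, §\ref{sBaker}).

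Finally I would identify the Galois action and multiplicities cleanly. The $G_k$-action on $\cC_k$ is induced from the $G_k$-action on the defining equations over $k$: the components $\bar X_F$ are defined over $k$ (since $f|_F$ is), so $G_k$ fixes each $\bar X_F$ as a set, while the chains over $L$ are indexed by $X_L(\bar k)$ — a finite $\bar k$-scheme with its natural Galois action — so $G_k$ permutes the chains accordingly; the length $r$, the multiplicities $\delta_L d_i$, and the attaching data are all intrinsic to $\Dv$ and hence $G_k$-equivariant. The genus claim $\genus(\bar X_F)=|F(\Z)_\Z|$ is Baker's theorem applied to $\overline{f|_F}$, noting that the interior lattice points of the Newton polygon of $\overline{f|_F}$ correspond bijectively to $F(\Z)_{\Z}$.

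The main obstacle I expect is the local regularity verification at the edge charts: one must show that after the prescribed unimodular subdivision the total space is regular *and* that the strict transform of $C$ meets every toric stratum transversally, which requires a careful local coordinate computation showing the leading term of $f$ in each chart is a non-degenerate monomial times a unit (this is where $\Dv$-regularity of the adjacent $v$-faces, not just of $L$ itself, gets used). Everything else — properness, flatness, the shape of the special fibre, the Galois action — follows formally from standard toric geometry once the charts and the fan subdivision are pinned down.
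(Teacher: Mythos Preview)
Your approach is essentially the one the paper takes: construct $\cC_\Delta$ as the closure of $\{f=0\}\subset\G_{m,K}^2$ in a toric $O_K$-scheme $X_\Sigma$ whose fan comes from the faces of $\Dv$ refined by the Hirzebruch--Jung data $\frac{m_i}{d_i}$, then verify regularity chart-by-chart using $\Dv$-regularity. The paper carries this out by writing down explicit matrices $M_{L,i}\in\SL_3(\Z)$ (one for each pair of adjacent rays in the subdivided fan at a $v$-edge $L$) giving monomial coordinate changes $(x,y,\pi)\leftrightarrow(X,Y,Z)$, and then computes $\cF_M(X,Y,0)$, $\cF_M(X,0,Z)$ directly to identify the special fibre with $\overline{f_F}$, $\overline{f_L}$; regularity is checked by computing $\dim m_P/m_P^2$ at each $P\in\cC_k(\bar k)$.

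One correction: your properness argument does not work as stated. The ambient toric scheme $X_\Sigma$ is \emph{not} proper over $O_K$ --- the fan $\Sigma$ has no $3$-dimensional cones (it lives in $\R^2\times\R_{\ge 0}$, and the subdivision only produces cones of dimension $\le 2$). The paper instead argues that the special fibre $\cC_k$ is proper over $k$ (each $\bar X_F$ is proper by construction as a completion with respect to its Newton polygon, each $X_L\times\Gamma_L$ is proper since $X_L$ is finite and $\Gamma_L$ is a chain of $\P^1$s), and then invokes the criterion that a flat finite-type $O_K$-scheme with proper special fibre is proper (\cite[3/3.28]{Liu} or EGA IV.15.7.10). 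You should substitute this argument for yours.
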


\CONTRONEFACEPIC

\begin{center}
Contributions to $\cC_k$ from an inner $v$-edge $L_1$ and an outer $v$-edge $L_2$
\end{center}

This is a special case of the following more general version:

\marginpar{Assume $\Delta$ has positive volume?}

\begin{theorem}[General case]
\label{mainthm2}
Suppose $f(x,y)=0$ defines a 1-dimensional scheme $C_0\subset\G_{m,K}^2$.
Then $\cC_\Delta/O_K$ is a proper flat model of the completion $C=C_0^\Delta$. 
Its%
\marginpar{Note: don't even assume generic fiber is smooth (can do arbitrary 
  nodal curves in Baker using shifts, though nodes need to be made rational,
  so in principle can handle more general curves in 3D as well)}
special fibre $\cC_k$ is a union of
closed subschemes $\bar X_F$ indexed by $v$-faces $F$ and $X_L\times_k \Gamma_L$
indexed by $v$-edges $L$, all 1-dimensional over $k$:
\marginpar{Check imperfect residue field}
\begin{enumerate}

\item
The $X_L, X_F, \bar X_F$ are as in \ref{defXFXL}. 
The $\bar X_F$ are geometrically connected, have arithmetic genus $|F(\Z)_{\Z}|$, and 
come with multiplicity $\delta_F$ in the special fibre
(i.e. defined by \smash{\scalebox{0.9}{$\overline{f_F}^{\delta_F}=0$}} in $\cC_k$).

\marginpar{Actually the theorem states that $\cC$ exists before 
we pick $n_i/d_i$}

\item
For every $v$-edge $L$ with slopes $[s_1^L,s_2^L]$ 
pick $\frac{n_i}{d_i}\in\Q$ so that
$$
  s_1^L=\frac{n_0}{d_0}\!>\!\frac{n_1}{d_1}\!>\!\ldots\!>\!\frac{n_r}{d_r}\!>\!\frac{n_{r+1}}{d_{r+1}}=s_2^L
    \>\>\quad\text{with}\>\>\>
  \scalebox{0.9}{$\left|
  \begin{matrix}
  n_i\!\!\! & n_{i+1} \cr
  d_i\!\!\! & d_{i+1} \cr
  \end{matrix}
  \right|$}=1.
$$
Let $\Gamma_L=\Gamma_L^1\cup...\cup\Gamma_L^r$ be a chain of $\P^1_k$s, with 
multiplicities $\delta_L d_i$, meeting transversally:
\marginpar{Can the statement and the proof be adapted to apply to 2IVstar-0 resolution? That is,
  with $\pi$ replaced by a more general $Z$}
$\infty\in\Gamma_L^i$ is identified with $0\in\Gamma_L^{i+1}$;\\
when $r=0$, let $\Gamma_L=\Spec k$, viewing it as $0\in \Gamma^1_L$ and $\infty\in \Gamma^r_L$.
\item
The subscheme $X_L\times\{0\}\subset X_L\times\Gamma^1_L$ is identified with the subscheme of 
$\bar X_{F_1}\setminus X_{F_1}$ that corresponds to $L$ via \eqref{xfpts}; similarly for
$X_L\times\{\infty\}\subset X_L\times\Gamma^r_L$ and $\bar X_{F_2}\setminus X_{F_2}$ when $L$ is inner.
\item
The intersections of $\bar X_{F_i}$ ($i=1,2$) with $X_L\times\Gamma_L$ and, when $r=0$, 
of $\bar X_{F_1}$ with $\bar X_{F_2}$ at $X_L$ are transversal.
In other words, the intersection, as a scheme, is given by \smash{\scalebox{0.9}{$\overline{f_L}^{\delta_L}=0$}}.
\end{enumerate}
The model $\cC_\Delta$ is given by explicit charts in \S\ref{sProof}.
It is geometrically regular at
\begin{itemize}
\item 
the smooth locus of $X_F$, for every $v$-face $F$,
\item
(smooth locus of $X_L$)$\times\Gamma_L$, for every $v$-edge $L$,
\item
the smooth points of $\bar X_{F}\setminus X_F$ that correspond to $L$ via \eqref{xfpts},
when $L\subset\partial F$ is outer with $\delta_L=\delta_F$, and $r=0$ in (2).
\end{itemize}
If $C_0$ is $\Dv$-regular, then $C$ is smooth, and
\marginpar{the generic fibre $\cC_K/K$ is semistable and?} 
$\cC_\Delta/O_K$ is its \rnc{} model.
\end{theorem}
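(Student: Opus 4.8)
The plan is to verify every assertion on the explicit affine charts of $\cC_\Delta$ produced in \S\ref{sProof}. Those charts exhibit $\cC_\Delta$ as the Zariski closure of $C_0$ in a toric $O_K$-scheme $\cT$ attached to $\Dv$: the defining fan in $\R^3$ is the inner normal fan of the polyhedron $\{(u,t)\mid u\in\Delta,\ t\ge v(u)\}$, refined along every $v$-edge $L$ by inserting the rays dictated by the chosen slopes $s_1^L=\tfrac{n_0}{d_0}>\tfrac{n_1}{d_1}>\dots>\tfrac{n_{r+1}}{d_{r+1}}=s_2^L$ of~(2). The bounded faces of this polyhedron project onto the $v$-vertices, $v$-edges and $v$-faces of $\Delta$, and the unimodularity relation $\left|\begin{smallmatrix}n_i&n_{i+1}\\ d_i&d_{i+1}\end{smallmatrix}\right|=1$ is precisely what makes the refined fan regular along the strata over the $v$-edges. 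The $v$-faces $F$ and the rays inserted over the $v$-edges $L$ index the prime divisors $D_F$ and $D_L^i$ of the special fibre $\operatorname{div}_{\cT}(\pi)$, occurring there with multiplicities $\delta_F$ and $\delta_L d_i$ respectively --- the last coordinates of the primitive ray generators, by a direct lattice-point computation (cf.\ \ref{notintden}).

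\emph{Properness, flatness, generic fibre.} The support of the fan is all of $\R^2\times\R_{\ge0}$, so $\cT/O_K$ is proper, hence so is the closed subscheme $\cC_\Delta$. In each chart $\cC_\Delta$ is cut out of a domain that is flat over $O_K$ by a single element which is a non-zero-divisor and not divisible by $\pi$; so $\cC_\Delta$ has no component inside the special fibre, it is $O_K$-flat, and $\cC_k=\cC_\Delta\times_{O_K}k$ is pure of dimension~$1$. Over $K$ the chart equations recover the toric surface $\TD$ and the Baker charts of \eqref{Ccover}, so $\cC_\Delta\times_{O_K}K\cong C_0^\Delta=C$ and $\cC_\Delta$ is a model of $C$; and when $C_0$ is $\Dv$-regular the scheme $C_0^\Delta$ is outer regular at every edge of $\Delta$, so $C$ is smooth by Theorem \ref{thmbaker} and Remark \ref{rmkbaker}(c).

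\emph{Special fibre.} Work chart by chart. Over a $v$-face $F$ the ambient toric chart of $\cT$ has coordinates $\XX^{\pm1},\YY^{\pm1}$ and a $z$ with $z^{\delta_F}=\pi\cdot(\text{monomial})$, and $f$ becomes $(\text{monomial})\cdot(\tilde f_F+z\,\tilde h)$ with $\tilde f_F\bmod\pi=\overline{f_F}$. Intersecting $\cC_\Delta$ with the Cartier divisor $D_F=\{z=0\}$ gives the curve $\overline{f_F}=0$, whose Newton-polygon completion is $\bar X_F$; since $D_F$ appears with multiplicity $\delta_F$ in $\operatorname{div}_{\cT}(\pi)$, this contributes $\bar X_F$ with multiplicity $\delta_F$ to $\cC_k$ (i.e.\ cut out there by $\overline{f_F}^{\delta_F}=0$). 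The geometric connectedness, arithmetic genus $|F(\Z)_{\Z}|$ and the bijection \eqref{xfpts} are Baker's theorem for $X_F\subset\G_{m,k}^2$ (Remark \ref{rmkbaker}). Over a $v$-edge $L$, the toric surface transverse to the $D_L$-stratum of $\cT$ is the one attached to the cone $\langle n_{F_1},n_{F_2}\rangle$, whose minimal resolution is the Hirzebruch--Jung (Stern--Brocot) chain read off from $[s_1^L,s_2^L]$; the inserted divisors cut $\cC_\Delta$ in $X_L\times\Gamma_L^i$ with $\Gamma_L^i\cong\P^1_k$ a transverse fibre, chained up as in (2)--(3), carrying multiplicities $\delta_L d_i$, and meeting $\bar X_{F_1}$ (resp.\ $\bar X_{F_2}$) exactly at the points of $\bar X_{F_1}\setminus X_{F_1}$ (resp.\ $\bar X_{F_2}\setminus X_{F_2}$) corresponding to $L$ via \eqref{xfpts} --- because $L$ is simultaneously an edge of $\Dv$ adjacent to $F_1,F_2$ and an edge of the Newton polygons of $\overline{f_{F_1}}$ and $\overline{f_{F_2}}$. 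The transversality statements in (2)--(4) (the intersection scheme being $\overline{f_L}^{\delta_L}=0$) are then visible on the charts, the relevant components of the resolved $\cT$ being distinct and smooth and $\cC_\Delta$ a single equation in it.

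\emph{Regularity and conclusion.} At a smooth point of $X_F$ the chart of $\cT$ is smooth over $O_K$ (a ray is a regular cone) and $\cC_\Delta=\{\tilde f_F+z\,\tilde h=0\}$; on $D_F$ we have $z$ in the maximal ideal while $d\overline{f_F}\ne0$, so the defining equation lies outside the square of the maximal ideal and $\cC_\Delta$ is geometrically regular there; the same Jacobian argument on the resolved charts over $v$-edges handles $(\text{smooth locus of }X_L)\times\Gamma_L$. The third locus ($L\subset\partial F$ outer with $\delta_L=\delta_F$ and $r=0$) is the mixed-characteristic analogue of Remark \ref{rmkbaker}(c): the chart is a Baker chart $\G_m\times\A^1$ over $O_K$ and outer regularity of $\bar X_F$ at the points from $L$ gives regularity. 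Finally, if $C_0$ is $\Dv$-regular then every $X_F$ and $X_L$ (and every required outer point of each $\bar X_F$) is smooth, so these three loci cover $\cC_\Delta$ and it is regular everywhere; by (1)--(4) its special fibre is a union of smooth curves and chains of $\P^1$s meeting pairwise transversally, hence a strict normal crossings divisor, and $C$ is smooth --- so $\cC_\Delta$ is an \rnc{} model of $C$. The main obstacle I anticipate is the analysis over the $v$-edges: showing that the transverse toric singularity is resolved exactly by the chain coming from $[s_1^L,s_2^L]$, that the exceptional $\P^1$s carry the multiplicities $\delta_L d_i$ in $\operatorname{div}(\pi)$, and that the two ends of each chain glue onto $\bar X_{F_1}$ and $\bar X_{F_2}$ at the correct Baker boundary points; a secondary nuisance is patching the face-, edge- and vertex-charts together and controlling the overlaps near the $v$-vertices.
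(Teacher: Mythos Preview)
Your proposal is correct and follows the paper's proof in \S\ref{sProof} closely: construct a toric $O_K$-scheme from the fan attached to $\Dv$, take the closure of $C_0$, and verify everything chart by chart. The chart analysis, the identification of the pieces of the special fibre via the restrictions $\overline{f_F},\overline{f_L}$, and the regularity argument via $\dim m_P/m_P^2$ all match what the paper does (with the paper supplying the explicit $\SL_3(\Z)$ matrices $M_{L,i}$ that you leave implicit).

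The one genuine difference is properness. You take the fan to be the full inner normal fan of the polyhedron over $\Dv$, with support all of $\R^2\times\R_{\ge0}$, so that the ambient toric scheme $\cT$ is proper and $\cC_\Delta\subset\cT$ inherits properness. The paper instead uses only the 0-, 1- and 2-dimensional cones (no 3-dimensional cones over the $v$-vertices), so its $X_\Sigma$ is explicitly \emph{not} proper; it then argues that each piece $\bar X_F$, $X_L\times\Gamma_L$ of $\cC_k$ is proper over $k$, hence $\cC_k$ is proper, hence $\cC_\Delta/O_K$ is proper by \cite[3/3.28]{Liu}. Your route is conceptually cleaner but carries two small debts you should pay: check that after inserting the extra rays over each $v$-edge the 3-dimensional cones over the $v$-vertices still form a fan (they need to be subdivided compatibly), and check that the closure of $C_0$ in your larger $\cT$ does not acquire points in the 0-dimensional torus orbits. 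Both are true (the second because the paper's $\cC_\Delta$ is already proper, hence closed in any separated scheme containing it), but the paper's approach avoids having to say so.
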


The theorems are proved in \S\ref{sProof}. See Table \ref{glossarytable} for 
some examples, both $\Dv$-regular and not.
We end this section with a few comments:

\begin{remark}[Hirzebruch-Jung]
\label{sternbrocot}
To see that sequences as in \ref{mainthm2}(2) exist, take all numbers 
in $[s_2^L,s_1^L]\cap\Q$ of denominator $\le\max(\den s_1^L,\den s_2^L)$ in decreasing order. 
This is essentially a Haros (=Farey) series, 
and so satisfies the determinant condition \cite[Ch.\,III,\,Thm.\,28]{HW}. Now repeatedly remove,
in any order, terms of the form (\emph{loc.\,cit.},\,Thm.\,29)
$$
  \ldots > \frac{a}{b} > \frac{a+c}{b+d} > \frac{c}{d} > \ldots  
    \quad\longmapsto\quad
  \ldots > \frac{a}{b} > \frac{c}{d} > \ldots,
$$
until this is no longer possible; this 
corresponds to blowing down $\P^1$s of self-intersection $-1$, see \eqref{selfinteq} below.
The resulting minimal sequence is unique%
\footnote{E.g., \mrnc{} models would not be unique otherwise}.
If $(s_2^L,s_1^L)\cap\Z=\{N,...,N+a\}$ is non-empty, it has the form
$$
  s_1^L=\frac{n_0}{d_0}\!>\!\ldots\!>\!\frac{n_k}{d_k}\!>\!N\!+\!a\!>\!\ldots\!>\!
    N\!+\!1\!>\!N\!>\!\frac{n_l}{d_l}\!>\!\ldots\!>\!  
  \frac{n_{r+1}}{d_{r+1}}=s_2^L,
$$
with $d_0,...,d_k$ decreasing and $d_l,...,d_{r+1}$ increasing. If $s_2^L>0$, say (else shift 
by an integer), the numbers $N\!>\!\frac{n_l}{d_l}\!>\!\ldots\!>\!\frac{n_{r+1}}{d_{r+1}}$  
are the approximants of the Hirzebruch-Jung continued fraction expansion of $s_2^L$, 
see \cite[\S2.6]{Ful}.
(Similarly, for the first terms consider the expansion of $1-s_1^L$.)
\marginpar{See defslopes if changing signs is explained there. Explain when there are 
  no intermediate integers?}
These are designed to resolve toric singularities, which is essentially what the 
proof of \ref{mainthm2} does, so their appearance is expected. 
\marginpar{Finally, we remark that chains between two faces with $\delta_F=1$ have denominator 1.}
\end{remark}

\begin{remark}[Minimal vs normal crossings]
\label{minremark}
If $C/K$ is $\Dv$-regular, the theorem constructs a \rnc{} model, in fact \mrnc{} after
an easy modification (see \S\ref{sMinimal}). Recall that this is \emph{not} the same as the minimal regular 
model: the latter has generally fewer components but more complicated singularities in the 
special fibre. A \rnc{} model is encoded by the dual graph and the component multiplicities,
so it is somewhat easier to work with (and to draw, for the matter). Note that if a component $X$
of multiplicity $m$ meets $Y_1,...,Y_n$ of multiplicities $m_1,...,m_n$ transversally, then the 
self-intersection
\begin{equation}\label{selfinteq}
  X\cdot X = -\>\frac{m_1+...+m_n}{m} \qquad   (\in -\N).
\end{equation}
If $X$ has genus 0 and $X\cdot X=-1$, then $X$ may be blown down. However, when $n\ge 3$, the result 
is no longer r.n.c. For example, take the genus 12 curve $C/\Q_7$ from \cite{GRSS},
$$
  x^7=\frac{y^3\!-\!2y^2\!-\!y\!+\!1}{y^3\!-\!y^2\!-\!2y\!+\!1}.
$$ 
Letting 
$y\mapsto\frac{2y-3}{3y-2}$ and clearing the denominator, we get a model
(tree-like with no positive genus components, in agreement with \cite[App. A]{GRSS})
  \par\medskip\EXGRSS\hbox to 1.5em{\hfill}\par\smallskip\noindent  
\end{remark}

The components $X_{F_1}, X_{F_2}$ (multiplicity 21) have self-intersection $-1$. If they are blown
down, we find components of multiplicity 14 meeting `7,6 and 1', so they get self-intersection $-1$
and can be blown down again. Ditto for `6' and then `3', resulting in a minimal regular model 
which has one multiplicity 1 component with two (bad) singularities on the special fibre. 

%
%

%


\begin{example}
\label{expss}
Here is an example for Theorem \ref{mainthm2} (and not just \ref{mainthm1}).
Consider the curve $C_5/\Q_2$ from \cite[\S13.3]{PSS}. 
Letting $x\mapsto x\!+\!1, y\mapsto xy\!+\!1$ 
we find an equation
%

\begin{center}
$(3x^3\!+\!6x^2)y^3\!+\!9xy^2\!-\!(2x^3\!-\!6)y\!-\!4x^2\!-\!6x\!-\!4=0$.
\end{center}

\noindent
Here $\Dv$ and the special fibre of the model $\cC_\Delta$ are as follows:

\begin{center}
\PSSCURVE
\end{center}

\marginpar{$X,Y\to$ some other names}

\noindent
At the $v$-face $F_1$ (in red), the reduced equation is%
\footnote{as clear from the picture of $\Dv$ on the left, thanks to the scarcity 
of units in $\F_2$}
$\overline{f_{F_1}}=\XX\YY+\XX+\YY+1$ and it gives two irreducible components $X_{F_1^a}: \XX=1$ and 
$X_{F_1^b}: \YY=1$ meeting transversally at $P=(1,1)$; 
both have multiplicity $\delta_{F_1}=4$ on the special fibre.
The point $P$ is not regular, and the regular model has a chain of $\P^1$s 
with multiplicity 4 from $X_{F_1^a}$ to $X_{F_1^b}$ (zigzag on the right picture). Its length 
is not determined by $\Dv$, and requires a computation in the local ring at $P$.
(It is actually 5, see \cite[\S13.3]{PSS}.) 
\end{example}

\marginpar{The same curve
$
  -2x^3y-2x^3+6x^2y+3xy^3-9xy^2+3xy-x+3y^3-y=0.
$
works immediately at $p=3$}

\begin{remark}[Reduction of points]
\label{redpts}
Let $\cC/O_K$ be any regular model of a curve $C/K$. Write $N$ for the set of 
multiplicity 1 components of the special fibre $\cC_k$ of $\cC$, and $X^{ns}\subseteq X$ for the smooth locus
of $X$ for $X\in N$.
There is a natural reduction map 
$$
  \red\colon C(K) \to \cC_k(k),
$$
landing in $\coprod_{X\in N} X^{ns}(k)$ (see \cite[9/1.32]{Liu}),
and onto it if $K$ is Henselian. 
(In particular, if $N=\emptyset$, then $C(K)=\emptyset$.)

Now suppose $C/K$ is $\Dv$-regular and $\cC=\cC_\Delta$. 
The components $X\in N$ come from three sources, and are indexed by:
\begin{itemize}
\item[($1_i$)]
triples $(L,r,n)$ where $L$ is an \emph{inner $v$-edge} with $\delta_L=1$,
$r$ a root of $\overline{f|_L}$ in $k^\times$, and $n\in (s_2^L,s_1^L)\cap\Z$, plus
\item[($1_o$)]
pairs $(L,r)$ where $L\subset\partial F$ is an \emph{outer $v$-edge}, $\delta_L=1$, $\delta_F>1$
and $r$ a root of $\overline{f|_L}$ in $k^\times$, plus
\item[(2)]
\emph{$v$-faces} $F$ with $\delta_F=1$.
\end{itemize}
\noindent
Write $C_0$ for the curve $f=0$ in $\G_m^2$, and let $P\in C(K)$ be a point.

(a) Suppose $P\in C(K)\setminus C_0(K)$. By Baker's theorem \ref{thmbaker}(3) it corresponds 
to a root $r$ of $f_L$ for an edge $L$ of $\Delta$. The edge breaks into $v$-edges that 
correspond to factors of $f_L$ by its Newton polygon over the completion of~$K$. The factor
that has $r$ as a root gives the $v$-edge indexing the component to which $P$ reduces
(Case $(1_o)$).

(b) Suppose $P\in C_0(K)$, so $x(P),y(P)\in K^\times$.
The linear form 
$$
  \cL_P(i,j,z) = v(x(P))i+v(y(P))j+z
$$
on $\Delta_v\subset\R^3$ is minimised on some face.
It is easy to see that this face must project down either onto a $v$-edge of $\Delta$ 
\marginpar{Needs to quote the proof. Relation of $\cL_P$ to $F^*$?}
(Case $(1_i)$ or $(1_o)$) or a $v$-face (Case $(2)$). It corresponds to the component to 
which $P$ reduces.
\end{remark}

\begin{example}
The curve in Table \ref{glossarytable}(i) has $|N|=2$, with one component 
from $F_2$ (of genus 1) 
and one from the leftmost $v$-edge (of genus 0). Here 
\begin{center}
\begin{tabular}{llllll}
$v(x(P))<0$ & $\Rightarrow$ & $\red(P) \in\ $genus 1 component,\cr
$v(x(P))>4$ & $\Rightarrow$ & $\red(P) \in\ $genus 0 component.
\end{tabular}
\end{center}

\noindent
There are no points with $v(x(P))\in\{0,1,2,3,4\}$, as all the corresponding $\cL_P$ are 
minimised on a vertex of $\Dv$; the $v$-edge $L$ between $F_1$ and $F_2$ 
gives no components since $(s_2^L,s_1^L)\cap\Z=\emptyset$.
Replacing $\pi^3$ by $\pi^n$ with $n\ge 5$
in the equation of $C$ would introduce $\lfloor\frac{n-1}{4}\rfloor$ components 
of multiplicity 1 on the chain of $\P^1$s from $L$, to which points can reduce as well.
\end{example}

\endsection
\section{Construction of the model $\cC_\Delta$}
\label{sProof}

We now construct the model $\cC_\Delta$ and prove Theorem \ref{mainthm2}.

If $p$ is harmless, $k=\bar k$ and $C$ is $\Dv$-regular,
one can get $\cC_\Delta$ by passing to $K_e=K(\sqrt[e]{\pi})$ with
$e=\lcm_F\delta_F$, showing that $C/K_e$ is semistable (easy), and taking the quotient of a
semistable model $\cC/O_{K_e}$ by the action of $\Gal(K_e/K)\iso\Z/e\Z$; say $\zeta_e\in K$. 
This approach is well studied \cite{Vie,LorD,CES,Hal}, 
and the theorem follows from here for $\Dv$-regular curves, but only in the \emph{tame} case
$p\!\nmid\!e$. 
In the wild case $p|e$, however, $C/K_e$
has no reason to be semistable. To deal with that, and with non-$\Dv$-regular curves, 
we take another approach and brutally construct $\cC_\Delta$ with a toroidal 
embedding over~$O_K$. The numerology of the fan is, however, exactly the same as 
from the quotient construction. In fact, the charts are explicit, and 
toric geometry is only needed to get a free proof that $\cC_\Delta$ is separated.

%
%
%

\stepr 1{Planes}
\marginpar{Relying only on $v$-vertices of $\Delta$; the toric variety in Baker also only depends
on the vertices of $\Delta$}
Take a $v$-edge, say inner, $L\subset \partial F_1, \partial F_2$. Let $\tilde F_1$, $\tilde F_2$, 
$\tilde L\subset\R^3$ be the planes/line through the faces of $\tD\subset\R^3$ above them.
Pick an `origin' $\tilde P_0\in\tilde L\cap\Z^3$. 
We construct vectors $\nu, \omega_0, \ldots, \omega_{r+1}\in\Q^3$ 
with $\omega_i$ lying above one another (same $x,y$ coordinates
and $v$-coordinate decreasing with $i$),~with
$$
  \tilde L=\tilde P_0\!+\!\R\nu,\quad
  \tilde F_1=\tilde P_0\!+\!\R\nu\!+\!\R\omega_0,\quad
  \tilde F_2=\tilde P_0\!+\!\R\nu\!+\!\R\omega_{r+1},
$$
$$
  \det\smallmatrix{\nu_x}{\omega_{i,x}}{\nu_y}{\omega_{i,y}}=1, \qquad 
  \det(\nu,\omega_i,-\omega_{i+1})=\tfrac{1}{\delta_L d_i d_{i+1}}.
$$
\vskip -3cm
\FONETWOPIC
\bigskip

\noindent
To be precise, let $L$ be any $v$-edge and $\tilde L$ as above. Write $\delta=\delta_L$.
Fix $P_0, P_1\in \Z^2$ with 
$\LS(P_0)=0$, $\LSsup{F_1}(P_1)=1$ as in \ref{defslopes} and fractions $\frac{n_i}{d_i}$ ($i=0,...,r\!+\!1$)
as in \ref{mainthm2}. Thus,
$$
  v_{F_1}(P_1)-v_{F_1}(P_0)=\frac{n_0}{\delta d_0}\!>\!\frac{n_1}{\delta d_1}\!
     >\!\ldots\!>\!\frac{n_r}{\delta d_r}\!>\!\frac{n_{r+1}}{\delta d_{r+1}}.
$$

Write $\tilde L=P_0+\R\nu$ with $\nu=(v_x,v_y,v_z)$ such that $\delta\nu\in\Z^3$ is
primitive and $(v_x,v_y)$ goes counterclockwise along $\partial F_1$.
Write $P_1-P_0=(w_x,w_y)$ and $\omega_i=(w_x,w_y,\frac{n_i}{\delta d_i})$;
the choices force $v_x w_y-v_y w_x=1$.
We also make one modification:
$$
\text{If $L$ is outer, redefine $d_{r+1}\!=\!0$, $n_{r+1}\!=\!-1$, $\omega_{r+1}\!=\!(0,0,-\tfrac1\delta)$.}
  \eqno{(\dagger)}
$$
These modified charts will be responsible for the `most outer' points of $\cC_\Delta$.

In all cases, define planes in $\R^3$ (see above picture, when $L$ is inner),
$$
  \cP_{L,i} = \R\nu + \R\omega_i \quad\qquad i=0,\ldots,r\!+\!1.
$$

\noindent
The planes $P_0+\cP_{L,i}$ rotate around $\tilde L$; the first one 
contains $\tilde F_1$, and the last either contains $\tilde F_2$ ($L$ inner) or is 
vertical ($L$ outer).
Let 
$$
  \cP_{L,i}^{\perp+} \subset \cP_{L,i}^{\perp}
$$
be the ray ($\iso \R_+$) for which the half-space 
$P_0 + \cP_{L,i} + \cP_{L,i}^{\perp+}$ contains $\Dv$.
%

\stepr 2{Toroidal embedding}
\marginpar{State what is our space and its dual, $M$ and $N$, and $\perp$}
For toroidal embeddings over a DVR, we follow \cite[\S IV.3]{KKMS}.
For a $v$-edge $L$, 
define polyhedral cones in $\R^2\times\R_+$,

\marginpar{$\perp$ should be $\vee$? Actually ok, because we are in $\R^2\times\R_+$}
\begin{tabular}{@{\qquad}l@{\quad}l@{\qquad}l}
0-dimensional cone  & $\sigma_0=\{0\}$  \Tcr
1-dimensional cones & $\sigma_{L,i}=\cP_{L,i}^{\perp+}$ &  ($0\le i\le r+1$) \cr
2-dimensional cones & $\sigma_{L,i,i+1}=\cP_{L,i}^{\perp+}+\cP_{L,i+1}^{\perp+}$ & ($0\le i\le r$). \Bcropt{2}
\end{tabular}

\noindent
The set $\Sigma$ of all such cones from all $L$ is a fan.
The associated toric scheme 
$$
  T_\Sigma=\bigcup_{\sigma\in\Sigma}T_\sigma, \qquad\quad T_\sigma=\Spec O_K[\sigma^\vee\cap\Z^3]
$$
(denoted $X_\Sigma=\bigcup X_\sigma$ in \cite{KKMS}) 
is flat and separated over $O_K$ (though not proper, as there are no 3-dimen\-sional cones).
Its generic fibre naturally contains $\G_{m,K}^2$, and we define 
\begin{center}
$\cC_\Delta$ $\>\>=\>\>$ closure of $\{f=0\}\subset \G_{m,K}^2$ in $T_\Sigma$. 
\end{center}


\stepr 3{Charts}
Let us describe the charts $T_\sigma$ explicitly. 
Fix $L$ as in \refstepr 1, and tweak $\nu, \omega_i, -\omega_{i+1}$ 
into a basis of $\Z^3$: take $\delta\nu$ (which is primitive) for the first basis vector.
For the other two, shift $d_i\omega_i$ by an appropriate multiple $k_i\nu$ to get
rid of the denominator in the last coordinate. In other words, pick $k_i\in\Z$ with
$$
  k_i\>\>\equiv\>\>-n_i \cdot (\delta v_z)^{-1}     \mod \delta \qquad\qquad   (0\le i\le r+1).
$$
%
%
As $\delta\nu$ is primitive, $\delta v_z$ is invertible mod $\delta$, 
\marginpar{By defn of $\delta_L$}
so this is posssible; 
when $\delta=1$, set $k_i=0$, say. For $0\le i\le r$ define
\begin{center}
\scalebox{0.9}{$
  M_{L,i}=
  \begin{pmatrix}
  \delta v_x\! & d_i w_x \!+\! k_i v_x\! & - d_{i+1} w_x \!-\! k_{i+1} v_x\cr
  \delta v_y\! & d_i w_y \!+\! k_i v_y\! & - d_{i+1} w_y \!-\! k_{i+1} v_y\cr
  \delta v_z\! & \quad\frac{n_i}\delta \!+\! k_i v_z\! & \quad\,-\frac{n_{i+1}}{\delta} \!-\! k_{i+1} v_z\cr
  \end{pmatrix}
     =
  \begin{pmatrix}
  \delta v_x & d_i w_x & -d_{i+1} w_x \cr
  \delta v_y & d_i w_y & -d_{i+1} w_y \cr
  \delta v_z & \frac{n_i}\delta & -\frac{n_{i+1}}{\delta}\\[3pt]
  \end{pmatrix}
  \begin{pmatrix}
  1\! & \frac{k_i}\delta\!\! & -\frac{k_{i+1}}\delta \cr
  0\! & 1 & 0 \cr
  0\! & 0 & 1 \cr
  \end{pmatrix} 
    = S T.
$}
\end{center}
\marginpar{Except we now need to redefine $n_{r+1}=-1$, $d_{r+1}=0$ in the outer case}
%
Our choices guarantee that $M=M_{L_i}$ has integer entries. Moreover,
$$
\scalebox{1.0}{$\begin{array}{llllll}
  \det M&=&\det S = \delta v_x \frac{-d_i w_y n_{i+1} + d_{i+1} w_y n_i}\delta
    - \delta v_y \frac{-d_i w_x n_{i+1} + d_{i+1} w_x n_i}\delta + \delta v_z \cdot 0\cr
      &=& \delta v_x \frac{w_y}{\delta} - \delta v_y \frac{w_x}{\delta} = 1.
\end{array}$}
$$
Thus $M\in\SL_3(\Z)$, with inverse $M^{-1} = (\tilde m_{ij}) = T^{-1} S^{-1}$,
\begin{equation}\label{Minveqn}
\scalebox{0.9}{$
  M_{L,i}^{-1} = 
  \begin{pmatrix}
  1 & -k_i/\delta & k_{i+1}/\delta \cr
  0 & 1 & 0 \cr
  0 & 0 & 1 \cr
  \end{pmatrix} 
  \begin{pmatrix}
  w_y/\delta & -w_x/\delta & 0 \cr
  n_{i+1} v_y \!-\! \delta d_{i+1} v_z w_y\! & -n_{i+1} v_x\!+\!\delta d_{i+1} v_z w_x\! & \delta d_{i+1} \cr
  n_i v_y \!-\! \delta d_i v_z w_y & -n_i v_x \!+\! \delta d_i v_z w_x & \delta d_i \cr
  \end{pmatrix}$}.
\end{equation}
Because of the choices of $\nu, \omega_i$ and their orientation, we have
$$
\begin{array}{l@{\>=\>}l@{\>=\>}l@{\qquad}l@{\>=\>}l@{\>=\>}l}
  \cP_{L,i} & \R\nu + \R\omega_i & \R\> m_{*1}+ \R\> m_{*2}, & \sigma_{L,i} & \cP_{L,i}^{\perp+} & \R_+\>\mt3*, \cr
  \cP_{L,i+1} & \R\nu + \R\omega_{i+1} & \R\> m_{*1}+ \R\> m_{*3}, & \sigma_{L,i+1} & \cP_{L,i+1}^{\perp+} & \R_+\>\mt2*, \cr
\multicolumn{6}{c}{\cP_{L,i}\cap\cP_{L,i+1}=\R\nu= \R\> m_{*1}, \quad \sigma_{L,i,i+1} = \langle \cP_{L,i}^{\perp+},\cP_{L,i+1}^{\perp+}\rangle = \R_+\>\mt2* \!+\! \R_+\>\mt3*,}
\end{array}
$$
and monomial exponents from the dual cones are
$$
\begin{array}{llllllllllll}
  \sigma_{L,i}^\vee \cap \Z^3 &=& \Z m_{*1} + \Z m_{*2} + \Z_+ m_{*3} \cr
  \sigma_{L,i+1}^\vee \cap \Z^3 &=& \Z m_{*1} + \Z_+ m_{*2} + \Z m_{*3} \cr
  \sigma_{L,i,i+1}^\vee \cap \Z^3 &=& \Z m_{*1} + \Z_+ m_{*2} + \Z_+ m_{*3}. \cr
\end{array}
$$
%
%
In other words, the coordinate transformation 
$$
\begin{array}{l@{\>\>}l@{\>\>}l@{\>\>}l@{\>\>}l@{\>\>}l@{\>\>}l@{\>\>}l}
(\XX,\YY,\ZZ) &{=}& 
  ({x^{m_{11}}y^{m_{21}}\pi^{m_{31}}},
      {x^{m_{12}}y^{m_{22}}\pi^{m_{32}}},
      {x^{m_{13}}y^{m_{23}}\pi^{m_{33}}})
&{=}& (x,y,\pi) \bullet M,\cr
(x,y,\pi) &{=}& (\XX,\YY,\ZZ) \bullet M^{-1}
\end{array}
$$
gives ring homomorphisms (as $\mt23,\mt33 \ge 0$, the denominator makes sense)
$$
  K[x^{\pm1},y^{\pm1}]
  \quad 
  {\buildrel \scriptscriptstyle M\over \iso}
  \quad 
  \frac{O_K[\XX^{\pm1},\YY^{\pm1},\ZZ^{\pm1}]}{(\pi-\XX^{\tilde m_{13}}\YY^{\tilde m_{23}}\ZZ^{\tilde m_{33}})}
  \>\>\longleftinjects\>\>
  \frac{O_K[\XX^{\pm1},\YY,\ZZ]}{(\pi-\XX^{\tilde m_{13}}\YY^{\tilde m_{23}}\ZZ^{\tilde m_{33}})} = R,
$$
and this is the inclusion $O_K[\sigma_0^\vee]\leftinjects O_K[\sigma_{L,i,i+1}^\vee]$ of \cite{KKMS}.
Thus,
\marginpar{
Separate lemma on the reduced equation for a general linear form --- useful for points, 
differentials, regularity etc. (even properness)?}
$$
\begin{array}{llllllllllll}
  T_{\sigma_{L,i,i+1}}&=&\Spec R, &&&
  T_{\sigma_{L,i+1}}&=&\Spec R[Z^{-1}],\cr
  T_{\sigma_{L,i}}&=&\Spec R[Y^{-1}], &&&
  T_{\sigma_0}&=&\Spec R[Y^{-1},Z^{-1}],\cr
\end{array}
$$
and these are all flat $O_K$-schemes of relative dimension 2.
%
%
Glueing the $T_{\sigma_{L,i,i+1}}$ for varying $L$ and $i$ along their common 
opens gives $T_\Sigma$.

\stepr 4{The closure of $f=0$ in $T_\Sigma$}
The model $\cC_\Delta$ is covered by schematic closures of $\{f=0\}\subset \G_{m,K}^2$ 
in the various $T_{\sigma_{L,i,i+1}}$. 
Explicitly, view $f$ as a polynomial in $x,y,\pi$ 
whose non-zero coefficients $u_{ij}$ are in $\smash{O_K^\times}$,
\begin{equation}\label{fFMeq}
  f = \sum_{i,j} u_{ij} \pi^{v(a_{ij})}x^i y^j, \qquad u_{ij}=\frac{a_{ij}}{\pi^{v(a_{ij})}}.
\end{equation}
Let $M=(m_{ij})\in\SL_3(\Z)$, for the moment arbitrary. Write $M^{-1}=(\mt ij)$, 
suppose $\mt23\ge 0,\mt 33>0$, and change variables 
$x=\XX^{\tilde m_{11}}\YY^{\tilde m_{21}}\ZZ^{\tilde m_{31}}, y=\ldots, \pi=\ldots$ 
according to $M^{-1}$. 
For some unique $n_Y, n_Z\in\Z$, we have
\marginpar{Explicify stars}
$$
  f = 
  \YY^{n_Y} \ZZ^{n_Z} \cF_M(\XX,\YY,\ZZ), \quad \cF_M\!\in\! O_K[\XX^{\pm 1},\YY,\ZZ],
  \>\>Y\!\nmid\!\cF_M,\>\>Z\!\nmid\!\cF_M.
$$
Thus $\cF_M$ and $f$ have the same coefficients $u_{ij}$, just different monomials,
and $\cF_M=0$ defines a complete intersection
$$
  \cF_M(X,Y,Z) = \pi\!-\!\XX^{\mt13}\YY^{\mt23}\ZZ^{\mt33} = 0 \eqno{(*)}
$$
in $\Spec O_K[\XX^{\pm 1},\YY,\ZZ]$. 
Now let $M\!=\!M_{L,i}$ from \refstepr3. This $\Spec$ is 
$T_{\sigma_{L,i,i+1}}$, and $(*)$ defines
the open set $\cC_\Delta\cap T_{\sigma_{L,i,i+1}}\subseteq\cC_\Delta$. 
These cover all of~$\cC_\Delta$.

\stepr {45}{Special fibre}

The special fibre $\pi\!=\!0$ of $T_\Sigma$ meets $(*)$ in $\YY^{\mt23}\ZZ^{\mt33}\!=\!0$.
Its underlying reduced subscheme is $Z=0$ in the case $(\dagger)$, and 
$YZ=0$ otherwise. To find $(*)\cap \{Z=0\}$, 
write $f = Y^{n_Y} Z^{n_Z} \cF_M$ as above. 
Noting that $Z=0\,\implies\,\pi=0$, we obtain a $k$-scheme
$$
  (*)\cap \{Z=0\} = \Spec k[X^{\pm 1},Y] / (\cF_M(X,Y,0)).
$$
The non-zero terms in $\cF(X,Y,0)\in k[X,Y]$ come from monomials $\pi^{v(a_{ij})} x^i y^j$ in \eqref{fFMeq} that,
when rewritten in $X,Y,Z$, have minimal exponent (=$n_Z$) of $Z$. 
%
In other words, they minimise the following linear form $\Z^3\to \Z$ on $\Dv$,
$$
  \phi\colon (\alpha,\beta,\gamma) \>\>\mapsto \>\> \ord_Z(x^\alpha y^\beta \pi^\gamma) 
    = \mt31 \alpha + \mt32 \beta + \mt 33 \gamma.
$$
Geometrically, $\cP=\ker\phi$ is the plane spanned by the two columns 
of~$M$, and $\phi$ is minimised on some face $\tilde\lambda\subset\Dv$ parallel to $\cP$. 
Let $\lambda$ be the projection of $\tilde\lambda$ onto $\Delta$; it is 
a $v$-vertex/edge/face. With 
an appropriate choice of the basis of the lattice in \ref{defres}, \ref{defred}, 
we find that
$$
  \bar\cF_0(X,Y) = \overline{f_\lambda}. 
$$
This, and an identical computation for $(*)\cap \{Y=0\}$ to $M\!=\!M_{L,i}$ gives
$$
\begin{array}{llllllllllll}
\cF_M(X,0,0) &=& \overline{f_L}(X) \cr
\cF_M(X,Y,0) &=& \bigleftchoice{\overline{f_{F_1}}(X,Y)}{i=0}{\overline{f_L}(X)}{i>0}\cr
\cF_M(X,0,Z) &=& \bigleftchoice{\overline{f_{F_2}}(X,Y)}{i=r,\>\>L\text{ inner}}{\overline{f_L}(X)}{i<r.}
\end{array}
\eqno{(**)}
$$
For a fixed $v$-face $F$ and varying $L\subset\partial F$, the subsets
$\overline{f_F}=0$ of $\G_{m,k}\times\A^1_k$ are precisely the charts that 
define the completion $\bar X_F$. Glueing the pieces together, we find that $\cC_k$ 
is the union of $\bar X_F$ over all $v$-faces $F$, and $X_L\times \Gamma_L$ over $v$-edges $L$, as 
claimed in Theorem \ref{mainthm2} (and correct multiplicities). 
\marginpar{This needs $\delta_F=\delta_L v_F(P)$ as a separate remark near \ref{defslopes}}
Since $X_L$ are finite and so proper over $k$, and the $\bar X_F$
and $\Gamma_L$ are proper as well, $\cC_k/k$ is proper.
By \cite[3/3.28]{Liu} (or EGA IV.15.7.10), $\cC_\Delta/O_K$~is~proper.

\stepr 5{Regularity}

Let $P\in\cC_k(\bar k)$ be a point. It corresponds to an ideal 
$$
  m_P=(\pi,\XX\!-\!x_0,\YY\!-\!y_0,\ZZ\!-\!z_0)\>\subset\> O_K[\XX^{\pm 1},\YY,\ZZ] / (*).
$$
To prove the claims in \ref{mainthm2} when $P$ is regular, we compute $\dim m_P/m_P^2$. 
The equation $\pi=\XX^{\mt13}\YY^{\mt23}\ZZ^{\mt33}$ of $(*)$ eliminates $\pi$ 
(recall that $\mt23,\mt33\ge 0$),
and the assumptions of \ref{mainthm2} on $\bar f_L, \bar f_F$ eliminate either $X$ or $Y$ from 
$m_P/m_P^2$; so $\cC$ is regular at $P$ under those assumptions.

Finally, suppose $C$ is $\Dv$-regular.
Then $\cC$ is geometrically regular at every point of $\cC_k$, as above.
As geometric regularity is an open condition, this implies that the generic fibre,
which is $\bar C$ by construction, is smooth. So $\cC$ is a regular model of $\bar C$
in this case. 

This proves Theorems \ref{mainthm2} and \ref{mainthm1}.
%
%
%
\marginpar{Glueing: given functions $f_1,...,f_n$ on the generic fibre, conditions
  to get a regular model from regular models of affine pieces $f_i\ne 0,\infty$? this is 
  also nice for the glueing section later}
\qed



\endsection
\section{Minimal model}
\label{sMinimal}

From now on, assume that $C$ has genus $\ge 1$.

Here, in Theorem \ref{minmain}, we describe the \emph{minimal regular normal crossings} (\mrnc) model,
\marginpar{Models of $\P^1$: all faces are removable. Genus $>0$ from here onwards; 
  mention in notation?}
a canonical regular model obtained from any normal crossings model by 
`removing unnecessary stuff', that is repeatedly blowing down genus 0 components of 
self-intersection $-1$.
In our case, `unnecessary stuff' comes from $v$-faces that avoid $\Delta(\Z)$.
We will see later that $\Delta(\Z)$ also plays a crucial role 
in the description of \'etale cohomology and differentials.

\marginpar{mainthm1, mainthm2 $\to$ A, B?}

\begin{definition}
\label{defprincipal}
Let $F$ be a $v$-face of $\Delta$. We say that
\begin{itemize}
\item 
$F$ is \emph{removable} if $F\cap\Delta(\Z)=\emptyset$.
\item
$F$ is \emph{contractible} if $F\cap\Delta(\Z)$ has a unique point $P$, all 
$v$-edges $L\subset\partial F$ with $P\notin L(\Z)$ are outer, and $v(P)\in\Z$.
\item
$F$ is \emph{principal} in all other cases.
\end{itemize}
\end{definition}

We will refer to chains of $\P^1$s from Theorem \ref{mainthm1}(2) as \emph{inner chains} or
\emph{outer chains}, depending on whether they come from an inner or outer $v$-edge.

\marginpar{Removable faces can be locally removed with shifts. That makes our model 
more canonical. However these shifts affect the rest of the model,
therefore we need to incorporate glueing first. And in the case of elliptic 
curves the classification becomes cleaner.}

\begin{proposition}
\label{propremcon}
Let $C/K$ be a $\Dv$-regular curve, 
and $\bar X_F$ a component of $\cC_k$ from a $v$-face $F$, as in \ref{mainthm1}.
The following conditions are equivalent:
\begin{itemize}
\item[(a$_0$)]
$\bar X_F$ has genus 0 and meets the rest of the special fibre either at one point or 
at two points one of which lies on an outer chain.
\item[(b$_0$)]
$F$ is $\GL_2(\Z)$-equivalent to an $n\times 1$ right-angled triangle whose short sides 
are either outer $v$-edges (as in Table \ref{remcontable}, top left) or one outer $v$-edge 
and one inner, of length 1, bounding a removable face.
\item[(c$_0$)]
$F$ is removable.
\end{itemize}
Similarly, the following conditions are equivalent:
\begin{itemize}
\item[(a$_1$)]
$\bar X_F$ has genus 0 and meets the rest of the special fibre at two points,
that are not outer chains.
\item[(b$_1$)]
$F$ is $\GL_2(\Z)$-equivalent to a polygon in Table \ref{remcontable}, bottom row
(an $n\times 1$ or $2\times2$ right-angled triangle, 
\marginpar{Check outer means outer and not removable}
a chopped version of it, or a right-angled trapezoid with parallel sides $1$,$n$),
with inner $v$-edges as shaded and of denominator 1.
\item[(c$_1$)]
$F$ is contractible.
\end{itemize}
\end{proposition}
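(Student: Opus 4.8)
### Proof strategy for Proposition \ref{propremcon}

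The plan is to prove the two cycles of equivalences $(a_0)\Leftrightarrow(b_0)\Leftrightarrow(c_0)$ and $(a_1)\Leftrightarrow(b_1)\Leftrightarrow(c_1)$ by going around each triangle once, using Theorem \ref{mainthm1} to translate ``shape of $\bar X_F$ and its incident chains'' into combinatorics of $F$, and using Rabinowitz's classification (Example \ref{ArkRab}) to pin down the polygon shape from the genus condition. First I would record the dictionary coming from \ref{mainthm1}: the genus of $\bar X_F$ is $|F(\Z)_{\Z}|$; the number of points where $\bar X_F$ meets the rest of $\cC_k$ is $\sum_{L\subset\partial F}|X_L(\bar k)|$ (a point being replaced by a chain of $\P^1$s precisely when the slope interval at $L$ contains an intermediate fraction, i.e. $r>0$, and contributing nothing in the outer case $\delta_L=\delta_F$); and the chain from $L$ is an outer chain iff $L$ is outer. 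A genus-0 component forces $F(\Z)_{\Z}=\emptyset$.

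For $(c_0)\Rightarrow(b_0)$: if $F$ is removable then $F\cap\Delta(\Z)=\emptyset$, so in particular $F(\Z)=\emptyset$ and, being a lattice polygon with no interior lattice points, $F$ is $\GL_2(\Z)$-equivalent to one of Rabinowitz's list — a line segment (excluded, $F$ is 2-dimensional), an $n\times 1$ right triangle, a $2\times 2$ right triangle, or a height-1 trapezoid. But $F$ must be glued to other $v$-faces of $\Delta$ along its inner edges, and the key constraint is that any interior lattice point of $\Delta$ on $\partial F$ would have to lie on a $v$-edge; one checks (using convexity of $v$ and that $\Delta(\Z)\cap F=\emptyset$) that such points force $\partial F$ to meet $\Delta(\Z)$ only along edges of length $1$ that bound removable faces, which rules out the $2\times 2$ triangle and the trapezoid and leaves exactly the two configurations in $(b_0)$. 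For $(b_0)\Rightarrow(a_0)$: compute directly from \ref{mainthm1} — an $n\times 1$ triangle has $|F(\Z)_\Z|=0$ so genus $0$, its long edge and one short edge can be outer and one short edge of length $n$; the hypotenuse and one leg give the outer chains / points, and the arithmetic (self-intersection formula \eqref{selfinteq}, or just counting $|X_L(\bar k)|$) shows $\bar X_F$ meets the rest at one point, or at two points one of which is on an outer chain. For $(a_0)\Rightarrow(c_0)$: genus $0$ gives $F(\Z)_\Z=\emptyset$; if moreover $F$ had an interior point of $\Delta$ it would be an interior point with $v(P)\notin\Z$ lying in $F(\Z)$, but then $\delta_F>1$ and the denominator of each bounding slope forces extra intersection points — contradicting the ``one point, or two with one on an outer chain'' hypothesis. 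So $F\cap\Delta(\Z)=\emptyset$.

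The contractible cycle $(a_1)\Leftrightarrow(b_1)\Leftrightarrow(c_1)$ runs in parallel. For $(c_1)\Rightarrow(b_1)$: $F\cap\Delta(\Z)=\{P\}$ with $v(P)\in\Z$, so $\delta_F=1$ (hence all $\delta_L\mid\delta_F$ are $1$), and $F$ is a lattice polygon with exactly one interior lattice point lying in $\Delta(\Z)$; together with the requirement that the $v$-edges not through $P$ are outer, this forces $F$ to be one of the listed polygons (an $n\times1$ or $2\times2$ triangle, a chopped triangle, or a $1,n$ trapezoid) with the shaded inner edge of denominator $1$ — again via Rabinowitz applied after ``removing'' the outer structure, plus the observation that only polygons with a single lattice point at ``height zero'' relative to $v$ can be glued in. For $(b_1)\Rightarrow(a_1)$: each such polygon has $|F(\Z)_\Z|\in\{0\}$ (the unique interior point contributes to genus only if it sits at an integer height, and by $\delta_F=1$ it does not increase genus — one checks $|F(\Z)_\Z|=0$ in each case since the single interior point is the contracted point, not counted), so $\bar X_F$ has genus $0$; counting incident points via $\sum|X_L(\bar k)|$ over the (at most two) inner edges of denominator $1$ gives exactly two intersection points, neither on an outer chain. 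For $(a_1)\Rightarrow(c_1)$: two non-outer intersection points means exactly two inner $v$-edges carry the intersections; genus $0$ gives $F(\Z)_\Z=\emptyset$, and a short argument with the determinant/denominator constraints of \ref{mainthm1}(2) shows that if $F\cap\Delta(\Z)$ were empty we would be in the removable case $(a_0)$ (one point or an outer chain), and if it had $\ge 2$ points the genus or intersection count would be wrong; so $F\cap\Delta(\Z)$ is a single point $P$, the edges through it are the two inner ones, the rest are outer, and $\delta_F=1$ forces $v(P)\in\Z$.

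The main obstacle is the step ``$(c)\Rightarrow(b)$'', i.e. upgrading Rabinowitz's $\GL_2(\Z)$-classification of empty (or single-point) lattice polygons to the refined statement that accounts for which edges of $F$ are inner versus outer and what their denominators are. This requires a careful local analysis around $\partial F$: one must show that an inner $v$-edge $L\subset\partial F$ carrying an interior lattice point of $\Delta$ forces a neighbouring $v$-face across $L$ to be itself removable/trivial, and that the convexity of the piecewise-affine $v$ together with $\delta_F=1$ (in the contractible case) rigidly constrains the shape; the trapezoid and $2\times2$ cases have to be examined individually to see which of their edges can legitimately be inner. Everything else is bookkeeping with Theorem \ref{mainthm1} and the elementary continued-fraction combinatorics of Remark \ref{sternbrocot}.
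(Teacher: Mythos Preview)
Your overall strategy—combine the Arkinstall--Rabinowitz classification with the chain-counting dictionary from Theorem~\ref{mainthm1}—is right and is what the paper does. But two concrete problems derail your execution.

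First, in your $(c_1)\Rightarrow(b_1)$ step you claim that the unique point $P$ is an \emph{interior} lattice point of $F$ and that $v(P)\in\Z$ forces $\delta_F=1$. Both are false. In every contractible shape (see Table~\ref{remcontable}, or Example~(iv) in Table~\ref{glossarytable}), $P$ lies on the \emph{boundary} of $F$, on an inner $v$-edge; the polygon $F$ itself has no interior lattice points. And $\delta_F$ need not be $1$: for the $n\times 1$ triangle $\delta_F\mid n$, for the $2\times 2$ triangle $\delta_F\mid 2$. So your appeal to a ``one-interior-point'' classification is the wrong one; you should be applying Rabinowitz for polygons with \emph{no} interior points to $F$, exactly as in the removable case, and then sorting the outcomes by which edges are inner.

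Second, and more seriously, your $(a_i)\Rightarrow(c_i)$ step is where the real work is, and you skip it. The genus-$0$ hypothesis gives only $F(\Z)_\Z=\emptyset$, not $F(\Z)=\emptyset$: there may be interior points $P\in F(\Z)$ with $v(P)\notin\Z$, so Rabinowitz does not apply to $F$. Your proposed fix (``$\delta_F>1$ forces extra intersection points'') is not an argument. The paper avoids this by going $(a_i)\Rightarrow(b_i)$ directly: it passes to the restriction $\tilde F$ of $F$ to the sublattice $\Lambda$ spanned by $\bar F(\Z)_\Z$; genus $0$ says precisely that $\tilde F$ has no interior lattice points, so Rabinowitz applies to $\tilde F$. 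The case analysis is then driven by the lattice-index fact that when $\delta_F>1$, every edge $L\subset\partial F$ has $\delta_L<\delta_F$ except possibly those parallel to a single line—so almost every edge contributes a chain, and the count in~\eqref{chain0count} rules out most shapes. That sublattice restriction and the $\delta_L$-vs-$\delta_F$ dichotomy are the engine of the proof, and they are absent from your sketch.
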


\begin{table}[!htbp]
\begin{tabular}{l@{\qquad}l@{\qquad}ll}
\cbox{Removable\\$v$-face $F$:}
& \cbox{\remdeltapic} & $\iff$ & \cbox{\remmodelpic}\\[13pt]
\cbox{Contractible\\$v$-face $F$:}
& \cbox{\condeltapic} & $\iff$ & \cbox{\conmodelpic}\\[16pt]
\end{tabular}
\caption{Removable and contractible faces. Shading indicates the direction towards 
  other parts of $\Delta$ (inner $v$-edges)}
\label{remcontable}
\end{table}

\marginpar{Note that contractible trapezoids might be singular, not satisfying
$\Dv$-regularity condition}

\begin{proof}
\footnote{A direct classification-free proof of (a$_i$)$\iff$(c$_i$) would be very much appreciated}
Denote the four shapes in (b$_1$) by $S_1, S_2, S_3, S_4$ (cf. Table \ref{remcontable}). 
The edges of $S_3$ and $S_4$ span $\Z^2$, so such faces have $\delta_F=1$; 
similarly $\delta_F|n$ for $S_1$ and $\delta_F|2$ for $S_2$.

(b$_i$)\implies(c$_i$) Clear.

(b$_i$)\implies(a$_i$) By Theorem \ref{mainthm1}, outer $v$-edges contribute nothing, and inner 
ones give one chain ($S_1$, $S_4$) or two ($S_2$, $S_3$).

(a$_i$)\implies(b$_i$) 
Depending on the case, we have
\begin{equation}\label{chain0count}
  1\>\text{ or }\>2 \>\>=\>\> \text{\#$\>$chains from $X_F$} 
    \>\>=\>\> \sum_L |X_L(\bar k)|
    \>\>=\>\> \sum_L |L(\Z)_{\Z}|,
\end{equation}
summing over inner $v$-edges $L$, and outer $v$-edges $L\subset\partial F$ 
with $\delta_L<\delta_F$ (see Theorem \ref{mainthm1}).

Write $\tilde F$ for $F$ restricted to the sublattice $\Lambda$ spanned by 
$\bar F(\Z)_{\Z}$. The genus of $X_F$ is $|F(\Z)_{\Z}|$, assumed to be 0.
Therefore, by the Arkinstall--Rabinowicz classification of convex polygons with 
no interior points (Example \ref{ArkRab}), $\tilde F$ is one of $S_1,...,S_4$.
As $\Z^2/\Lambda$ is cyclic of order $\delta_F$ (see \ref{notintden}), 
$$
  \begin{tabular}{lll}
  $\delta_F =1$ &\implies& all $\delta_L=1$\cr
  $\delta_F >1$ &\implies& $\delta_L<\delta_F$ for all $L$ except possibly those\cr
    && parallel to one fixed line in $\Z^2$.\cr
  \end{tabular}
  \eqno(*)
$$
Suppose LHS of \eqref{chain0count} is 1. 
Then $F$ has one inner face $L$ with $L(\Z)_{\Z}=\emptyset$, and 
all the others are outer with $\delta_L=\delta_F$. This eliminates $\tilde F=S_2$. In the $S_3$ 
and $S_4$-case, $\delta_F=1$ by ($*$), but then one inner face with $L(\Z)_{\Z}=\emptyset$ 
is impossible by convexity, as there is no space behind $L$ to contain interior integral points 
(while genus$(C)>0$). So $\tilde F$ is $S_1$, the inner face gives the unique chain of $\P^1$s, and two
outer ones have $\delta_L=\delta_F$ which forces $\delta_F=1$ by ($*$). Thus, $F$ is $S_1$ 
(and, moreover, $n=1$). 

If the LHS of \eqref{chain0count} is 2, then the same (but more tedious) considerations give
the list of possible cases.

(c$_i$)\implies(b$_i$) Similar: as $F$ has no interior integer points, it is one 
of the $S_i$ by the Arkinstall--Rabinowicz classification; the rest is by inspection. 
\end{proof}

\marginpar{In genus 0 no interior points, so every $v$-face is removable}

%

\begin{remark}
\label{remprin}
Let $C/K$ be $\Dv$-regular curve of genus $g>0$.
\begin{itemize}
\item 
If $g=1$ with $|\Delta(\Z)^L_{\Z}|=1$, there are no principal $v$-faces.
\item
In all other cases, e.g. if $g>1$, there is always a principal $v$-face.
\end{itemize}
Indeed, in the first case, it follows from Theorem \ref{mainthm1}, after contracting the chains, that the special
fibre is a reduced $n$-gon of $\P^1$s (Kodaira type I$_n$).
In the second case, if $\Delta(\Z)^F\ne\emptyset$ or if $g=1$, $\Delta(\Z)^L_{\Z}=\emptyset$,
then there is a principal face by definition.
Otherwise for distinct $P,Q\in\Delta(\Z)$,
every point in the interior of the line segment $PQ$ belongs to principal $v$-face(s).
\end{remark}

\marginpar{This is in effect $C$ semistable $\iff \Jac C$ semistable with Saito thrown in}

\begin{proposition}
\label{propdefic}
Let $C$ be a $\Dv$-regular curve of genus $g>0$. If $v(P)\in\Z$ for every $P\in\Delta(\Z)$,
then exactly one of the following holds:
\begin{itemize}
\item 
$g>1$ and every principal $v$-face $F$ has $\delta_F=1$.
\item
$g=1$ and $C$ is a semistable elliptic curve (type ${\rm I}_n$ with $n\ge 0$).
\item
$g=1$, $C(K)=\emptyset$, and up to an isomorphism of the form $x\mapsto x^a y^b\pi^c$, 
$y\mapsto x^h y^i\pi^j$, $\Dv$ has one of the three exceptional shapes

\begin{equation}\label{eqdefshapes}
\hbox to 25em{\rm\DEFICIENTSHAPES}
\end{equation}

\noindent
The special fibre $\cC_k$ consists of a unique 
genus~1 component $\bar X_F$, of multiplicity 2,2,3, respectively.
%
%
\end{itemize}
\end{proposition}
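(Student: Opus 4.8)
The plan is to split on whether $C$ is semistable --- equivalently, by Theorem~\ref{redcond}(3), whether every principal $v$-face of $\Delta$ has denominator~$1$ --- and then to run a small combinatorial classification in the non-semistable case. First note that the three alternatives are pairwise incompatible: the first needs $g>1$ and the other two $g=1$, and an elliptic curve carries the rational point $0$ whereas the third asserts $C(K)=\emptyset$. Now suppose every principal $v$-face has $\delta_F=1$. Then $C$ is semistable (Theorem~\ref{redcond}(3), or directly from Theorem~\ref{mainthm1} after blowing down all the chains of $\P^1$s à la Remark~\ref{sternbrocot}). If $g>1$ this is the first alternative. If $g=1$, then by Remark~\ref{remprin} the minimal model is a cycle of $\P^1$s or a single reduced smooth genus~$1$ curve, so $\cC_k$ has a multiplicity-$1$ component with a smooth $k$-point, which lifts by Hensel's lemma to a point of $C(K)$; hence $C$ is a semistable elliptic curve of Kodaira type $\mathrm I_n$, $n\ge 0$ --- the second alternative.

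So assume some principal $v$-face $F$ has $d:=\delta_F>1$; I claim $g=1$. Since $d>1$ there is $Q\in\bar F(\Z)$ with $v(Q)\notin\Z$. Now $v$ is integral at every vertex of $F$ (vertices of $F$ lift to vertices of $\Dv\subset\R^3$, whose last coordinate is a value $v(a_{ij})\in\Z$) and, by hypothesis, at every point of $\operatorname{int}\Delta\cap\Z^2=\Delta(\Z)$; and a non-endpoint of an inner $v$-edge, as well as any $v$-vertex in $\operatorname{int}\Delta$, lies in $\Delta(\Z)$. Therefore $Q$ lies in the relative interior of an \emph{outer} $v$-edge of $F$. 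In particular the vertices of $F$ generate the affine sublattice $\Lambda$ spanned by $\bar F(\Z)_\Z$, so $\operatorname{conv}(\bar F(\Z)_\Z)=F$ and, by Theorem~\ref{mainthm1}, $\bar X_F$ has genus $|F(\Z)_\Z|=|F(\Z)|$. If $F$ had two interior lattice points, these together with the vertices of $F$ would already generate $\Z^2$ affinely, forcing $\Lambda=\Z^2$ and $d=1$ --- a contradiction. So $F$ has a single interior lattice point $P$, i.e.\ $F$ is a reflexive polygon and $\bar X_F$ has genus~$1$; since $F\subseteq\operatorname{int}\Delta$ and $F$ contains $\Delta$'s only interior lattice point, $g=|\Delta(\Z)|=1$. (Alternatively one may invoke the classical fact that for $g\ge 2$ the curve $C$ is semistable iff $\Jac C$ is, which holds here by Theorem~\ref{redcond}(6).)

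With $g=1$: a $K$-point of $C$ would exhibit $C$ as $\Jac C$, which is semistable by hypothesis (Theorem~\ref{redcond}(6)), so $C(K)=\emptyset$. As above $F$ is reflexive, $v(P)\in\Z$, $\bar X_F$ has genus~$1$. One then checks that $\partial F$ carries no inner $v$-edge: since $P$ is interior it lies on no $v$-edge, so an inner $v$-edge together with convexity of $v$ and the localisation of all fractional lattice points of $\bar F$ to outer edges would force a second interior lattice point of $\Delta$ --- impossible for a reflexive $F=\Delta$. Hence $\Delta=\bar F$ and $\cC_k=\bar X_F$, of multiplicity $d$. It remains to enumerate the pairs $(F,v)$ with $F$ a reflexive lattice polygon, $v$ affine and integer-valued at all vertices of $F$ and at $P$, and common denominator $d>1$ on $\bar F(\Z)$: imposing integrality at all the vertices simultaneously caps $d$ at $3$ (on the two largest reflexive triangles $(0,0),(3,0),(0,3)$ and $(0,0),(4,0),(0,2)$ the three vertex equations force $d\mid 3$, resp.\ $d\mid 2$), and a finite check over the reflexive polygons and the admissible $v$ on each, modulo $\GL_2(\Z)$ and the substitutions $x\mapsto x^ay^b\pi^c$, $y\mapsto x^hy^i\pi^j$ (acting by unimodular changes of $(i,j)$ and affine shifts of $v$), leaves precisely the three shapes of \eqref{eqdefshapes}, with multiplicities $2,2,3$.

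The main obstacle is the step in the last two paragraphs reconciling ``$F$ principal'' (which, with $|F(\Z)|=1$, requires an inner $v$-edge on $\partial F$) with ``$d>1$'' (which, as shown, confines every fractional lattice point of $\bar F$ to an outer edge): one must show that an inner $v$-edge then produces an extra interior lattice point of $\Delta$, contradicting reflexivity, so that in fact $\Delta=\bar F$ and $g=1$. Everything before that is bookkeeping with the definitions and with Theorems~\ref{mainthm1} and~\ref{redcond}, and the concluding enumeration of admissible $(F,v)$ is bounded but somewhat fiddly.
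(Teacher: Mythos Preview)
Your overall structure (split on whether every principal face has $\delta_F=1$) is sound, and the alternative route to $g=1$ via the Deligne--Mumford fact that $C$ and $\Jac C$ are simultaneously semistable for $g\ge 2$, combined with Theorem~\ref{redcond}(6), does work. Your primary argument for $g=1$, however, has two real gaps.

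First, the assertion that ``two interior lattice points of $F$ together with its vertices generate $\Z^2$ affinely'' is not obvious and you give no justification. It is in fact true --- comparing Pick's theorem for $\Z^2$ and for the sublattice $\Lambda$ gives $B-dB'=2(d-1)(|F(\Z)|-1)$, and an edge-by-edge count shows $B-dB'\le 0$, forcing $|F(\Z)|\le 1$ --- but this is a genuine lemma. Second, even granting $|F(\Z)|\le 1$, you cannot conclude $|\Delta(\Z)|=1$: other $v$-faces could carry further interior lattice points of $\Delta$. Note also that once you have $g=1$ via the Deligne--Mumford route, the bound $|F(\Z)|\le|\Delta(\Z)|=1$ is automatic, so the lattice claim becomes redundant and the cleanest fix is simply to drop your primary argument.

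The paper proceeds differently: it splits by genus first. For $g\ge 2$ with a principal $F$ of $\delta_F>1$ it invokes the Arkinstall--Rabinowitz classification of polygons with $\le 1$ interior point (handling $|F(\Z)|\ge 2$ by subdividing $F$), then in each case locates an inner $v$-edge $L$ with $\delta_L>1$, contradicting $\Delta(\Z)=\Delta(\Z)_\Z$. For $g=1$ it runs through Rabinowitz's list of the 15 one-interior-point polygons to isolate the three whose vertices and interior point fail to span $\Z^2$ --- which is exactly what your concluding ``finite check'' would have to do. So your approach and the paper's converge at the end, but the paper's use of the explicit classification replaces both your unproved lattice lemma and the Deligne--Mumford appeal.
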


\begin{proof}
Case 2. $g=1$, $|\Delta(\Z)^L_{\Z}|=1$. See Remark \ref{remprin}.

Case 3. $g=1$, $|\Delta(\Z)^F_{\Z}|=1$. There is a unique principal $v$-face $F$.
Up to $\GL_2(\Z)$-equivalence, there are 15 possible convex lattice polygons 
with one interior point \cite[Thm. 5]{Rab}. Of these, in 12 cases the coordinates of the 
vertices and the interior point generate $\Z^2$ as an affine lattice, so $\delta_F=1$. 
The other three shapes (Cases 1,4,9 of \emph{loc.cit.}) are as in \eqref{eqdefshapes},
with lattice index 2,2,3. Assume we are in one of these, so $\delta_F=2,2,3$, respectively. 
After rescaling $x$, $y$ by powers of $\pi$, the valuations can be made as in \eqref{eqdefshapes}.

Case 1. $g\ge 2$. Suppose $F$ is a principal $v$-face with $\delta_F>1$. 
If $|F(\Z)|<2$, from \ref{ArkRab} and the proof of Case 3, the only possibilities for $F$ 
are 
\begin{itemize}
\item[(a)]
an $n\times 1$ right triangle; here $|F(\Z)|=0$, $1\!<\!\delta_F\,|\,n$,
\item[(b)]
a $2\times 2$ right triangle; here $|F(\Z)|=0$, $\delta_F\!=\!2$,
\item[(c)]
the three cases in \eqref{eqdefshapes}; here $|F(\Z)|=1$, $\delta_F\!=\!2,2,3$ respectively.
\end{itemize}
In (b) and (c), as $F$ is not the whole of $\Delta$, at least one $v$-edge $L\subset\partial F$ 
must be inner with $\delta_L>1$, contradicting $\Delta(\Z)=\Delta(\Z)_{\Z}$.
Similarly, in (a), as $F$ is not removable or contractible, one of the corners 
of the $v$-edge $L\subset\partial F$ of length $n$ must in $\Delta(\Z)$, and this again
makes $L$ inner with $\delta_L>1$.

Finally, if $|F(\Z)|\ge 2$, subdivide $F$ into convex lattice polygons one of which
has exactly one interior integer point, and apply the same argument to it to show that it is in 
(c), to get the same contradiction.
\end{proof}

\marginpar{
Pictures: every thick (=principal) component has $\ge 3$ chains or positive genus, every thin
  one two or one intersections, genus 0, and multiplicity $<$ sum of neighbours.
}

\begin{theorem}
\label{minmain}
Let $C: f=0$ be a $\Dv$-regular curve, with $f=\sum a_{ij}x^i y^j$.
Its \mrnc{} model $\cC_\Delta^{\min}/O_K$ 
has the following special fibre $\cC_k^{\min}/k$.
%
\begin{enumerate}
\item
Let $f_{nr}=\sum' a_{ij}x^i y^j$, the sum taken over those $(i,j)$ that lie in the closure
of some non-removable $v$-face of $\Delta$. Then $f_{nr}$ is $\Dv$-regular.
\item
Let $\cC$ be the model of $f_{nr}=0$ of \S\ref{sProof} and Theorem \ref{mainthm1},
and $\cC_k$ its special fibre.
The principal $v$-faces of $\Delta$ are in 1-1 correspondence 
with principal components of $\cC_k$ in the sense of Xiao \cite{Xiao} 
(positive genus or meeting the rest of $\cC_k$ in $\ge 3$ points).
\item
Starting with $\cC_k$, repeatedly contract $\P^1$s of self-intersection $-1$ 
in the chains between two principal $v$-faces\footnote{if there are no contractible faces,
and all chains in \ref{mainthm1}(2) are chosen to be minimal (cf. \ref{sternbrocot}),
then $\cC_k$ is already minimal and there is nothing to contract}. This gives $\cC_k^{\min}/k$.
%
\end{enumerate}

%
%
%
\marginpar{Move to front before $f_m$, as a separate statement. E.g. a component of 
$\cC$ is principal iff it comes from a principal face}
\marginpar{And Halle}
\marginpar{+Their singularities are almost always ignorable. 
  Alternatively, they are usually outer regular. But not always, cf.\\
  $(y\!-\!x\!-\!1)(2y\!-\!x\!-\!1)=px^6$}
\end{theorem}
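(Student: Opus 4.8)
\textbf{Proof strategy for Theorem \ref{minmain}.} The plan is to start from the r.n.c.\ model $\cC_\Delta$ of Theorem \ref{mainthm1} and track exactly which $\P^1$s get blown down during the passage to the minimal r.n.c.\ model, matching this against the combinatorics of removable and contractible $v$-faces. First I would establish (1): the set of $(i,j)$ lying in closures of non-removable $v$-faces is ``saturated'' with respect to $\Dv$-regularity, because removing monomials that only occur on removable faces does not change any $X_F$ or $X_L$ for the surviving $v$-faces $F$ (their restrictions $f|_F$, $f|_L$ are unchanged), and it does not create new faces either --- the lower convex hull of the surviving points has the same non-removable faces. Here one must check that $\Delta(f_{nr})$ still has positive volume and that $f_{nr}$ is genuinely a new curve (nonzero, not a monomial), which holds because $C$ has genus $\ge 1$ so $\Delta(\Z)\ne\emptyset$ and at least one non-removable $v$-face exists.

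Next, for (2), I would use Proposition \ref{propremcon} together with the self-intersection formula \eqref{selfinteq}. The components of $\cC_k$ (for the model of $f_{nr}$) are the $\bar X_F$ and the chain-$\P^1$s. A chain-$\P^1$ always has genus $0$ and meets exactly two other components, so it is never principal in Xiao's sense. Thus the principal components of $\cC_k$ are exactly the $\bar X_F$ that have positive genus or meet $\ge 3$ other components; by Proposition \ref{propremcon}, the $\bar X_F$ that fail this are precisely those with $F$ removable or contractible. Since $f_{nr}$ has no removable faces by construction, and I would argue that $f_{nr}$ also has no \emph{contractible} faces (a contractible face $F$ has its unique interior point $P$ lying in no other $v$-face, but $P\in\Delta(\Z)$ forces a neighbouring $v$-face across the $v$-edge through $P$, contradicting contractibility once removable faces are gone --- or more carefully, one shows a contractible face of $\Delta$ becomes non-removable but then its defining data would have to persist), the $\bar X_F$ of $\cC_k$ are in bijection with principal $v$-faces. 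I expect this bookkeeping about contractible faces to require a little care: one should verify directly that after deleting removable-only monomials, every surviving $v$-face is either principal or its chain structure is already accounted for.

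For (3), I would invoke the Hirzebruch--Jung discussion in Remark \ref{sternbrocot}: within a chain between two $v$-faces the $\P^1$s of self-intersection $-1$ are exactly those removed by the Haros/Farey reduction $\frac ab > \frac{a+c}{b+d} > \frac cd \mapsto \frac ab > \frac cd$, and blowing them all down yields the unique minimal chain, which is still a chain (normal crossings preserved) precisely because its endpoints are distinct principal components each meeting $\ge 3$ things or of positive genus --- so no further $-1$-curve of genus $0$ meeting $\le 2$ components appears. The one subtlety is the terminal case where a principal component $\bar X_F$ itself could acquire self-intersection $-1$ and genus $0$; but genus $0$ plus $\ge 3$ intersection points (or the endpoints of a contractible-face arc) is incompatible with blow-down staying normal-crossings, which is exactly why we stop there, and this is the content of Xiao's minimality criterion for r.n.c.\ models.

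\textbf{Main obstacle.} The genuinely delicate step is (2): pinning down that the principal components of $\cC_k$ (model of $f_{nr}$) correspond bijectively to principal $v$-faces, and in particular that no contractible faces survive in $f_{nr}$ and that no ``accidental'' principal-looking configuration is created or destroyed when passing from $f$ to $f_{nr}$. This requires combining Proposition \ref{propremcon}, the Arkinstall--Rabinowicz classification (Example \ref{ArkRab}), and a careful convexity argument showing that deleting monomials supported only on removable faces changes neither the list of non-removable $v$-faces nor their incidence structure --- everything else (parts (1) and (3)) is then a routine consequence of Theorem \ref{mainthm1}, Remark \ref{sternbrocot}, and the self-intersection formula \eqref{selfinteq}.
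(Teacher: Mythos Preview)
Your overall strategy---invoke Proposition \ref{propremcon} to match $v$-faces with component types, and Remark \ref{sternbrocot} for the chain contractions---is exactly the route the paper takes (its proof is literally the single sentence ``Follows from Proposition \ref{propremcon} and Remark \ref{sternbrocot}''). Parts (1) and (3) of your argument are fine in outline.

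There is a genuine error in your treatment of (2). You claim that $f_{nr}$ has no contractible $v$-faces, and try to argue this by saying that once removable faces are gone, the inner $v$-edge through the distinguished point $P$ of a contractible face must border another surviving face, ``contradicting contractibility''. This does not work: contractibility is a condition on $F$ relative to $\Delta(\Z)$ and the inner/outer status of its edges, and none of that changes when you delete monomials supported only on removable faces. In particular the example in Table \ref{glossarytable}(iv) has a contractible face (marked with an arc) that certainly survives after removing the grey removable face. So $f_{nr}$ \emph{does} in general retain contractible faces.

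What actually happens is this: the components $\bar X_F$ of $\cC_k$ (for $f_{nr}$) come from principal and contractible faces. By Proposition \ref{propremcon}, the contractible ones satisfy (a$_1$)---genus $0$, meeting the rest in exactly two points, neither on an outer chain---so they are \emph{not} principal in Xiao's sense; the principal faces are exactly those whose $\bar X_F$ fails both (a$_0$) and (a$_1$), i.e.\ the principal components. This gives the bijection in (2) directly, with no need to argue contractible faces away. The contractible-face components and their attached chains then form a single chain between two principal components (cf.\ Table \ref{remcontable}, bottom row), and it is precisely these, together with any non-minimal Hirzebruch--Jung chains, that get blown down in step (3). Your footnote-reading is correct: if there are no contractible faces and the chains are already minimal, nothing happens in (3).
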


\begin{proof}
Follows from Proposition \ref{propremcon} and Remark \ref{sternbrocot}.
\end{proof}

\endsection
\section{Reduction and inertia}
\label{sRed}

Thoroughout this section, $C$ is a $\Dv$-regular curve, and $\cC_k$ the special fibre of the
\rnc{} model $\cC_\Delta$ of \S\ref{sProof} and Theorem \ref{mainthm1}.
Write $\Gamma(\cC_k)$ for its \emph{dual graph}: it has geometric components as vertices, 
with an edge for every double point. Viewing it as a topological space, we compute its homology:


\marginpar{Recall that a curve acquires semistable reduction over a finite extension $K'/K$ 
(Deligne-Mumford), characterised by the fact that $I_{K'}$ acts unipotently on $H^1(C)$.
We say that $C/K$ is \emph{tame} if $I_K$ acts through a tame quotient, equivalently such an
extension is tamely ramified.}

\marginpar{By Saito's criterion \cite[Thm 3]{Saito}, this is equivalent to 
the minimal n.c.d. model satisfy his condition (*)}

\begin{theorem}[Homology of the dual graph]
\label{dualgraph}
Suppose $C$ is $\Dv$-regular. Then $H_1(\Gamma(\cC_k),\C)$ has dimension $|\Delta(\Z)^L_{\Z}|$, and
$$
   H_1(\Gamma(\cC_k),\Z) \>\>\iso\>\> \ker\Bigl(\bigoplus\subsmalltext4{inner}{$v$-edges $L$} \Z[X_L(\bar k)] 
     \overarrow{\phi} \Z[\text{$v$-faces}]
   \Bigr)
$$ 
as $G_k$-modules. Here $G_k$ acts naturally on $\Z[X_L(\bar k)]$,
trivially on $\Z[\text{$v$-faces}]$, and for $L\subset \partial F_1, \partial F_2$ 
and every $r\in X_L(\bar k)$, we set $\phi(r)=F_1-F_2$.
\end{theorem}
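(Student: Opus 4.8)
The plan is to compute $H_1$ of the dual graph $\Gamma(\cC_k)$ directly from the combinatorial description of $\cC_k$ in Theorem \ref{mainthm1}. First I would set up a homotopy equivalence that collapses the chains of $\P^1$s coming from $v$-edges. By Theorem \ref{mainthm1}, each inner $v$-edge $L$ with $|X_L(\bar k)|$ points contributes $|X_L(\bar k)|$ paths in $\Gamma(\cC_k)$ from the vertex $\bar X_{F_1}$ to the vertex $\bar X_{F_2}$ (each path being a chain of $r$ intermediate $\P^1$ vertices, or a single edge when $r=0$), while each outer $v$-edge contributes a tree hanging off $\bar X_{F_1}$ (a chain with a free end, or nothing). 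Contracting all these chains to single edges, and pruning the outer trees, is a deformation retract, so it does not change $H_1$. After this reduction, $\Gamma(\cC_k)$ is homotopy equivalent to the graph $\Gamma'$ whose vertices are the $v$-faces $F$ and whose edges are, for each inner $v$-edge $L\subset\partial F_1,\partial F_2$, one edge joining $F_1$ and $F_2$ for each element of $X_L(\bar k)$.

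Next I would identify $H_1(\Gamma',\Z)$ with the kernel of the stated boundary map. For a connected graph, $H_1(\Gamma',\Z) = \ker\bigl(\partial\colon C_1(\Gamma') \to C_0(\Gamma')\bigr)$ where $C_1 = \bigoplus_{\text{edges}}\Z$, $C_0 = \bigoplus_{\text{vertices}}\Z$, and $\partial$ sends an oriented edge to (head $-$ tail). With the chosen orientation conventions from \ref{defslopes} (which fix, for each inner $L$, an ordering $F_1, F_2$ of its two adjacent faces), $C_1(\Gamma')$ is exactly $\bigoplus_{L\text{ inner}} \Z[X_L(\bar k)]$ and $C_0(\Gamma') = \Z[\text{$v$-faces}]$, with $\partial(r) = F_1 - F_2$ for $r \in X_L(\bar k)$ — this is the map $\phi$ in the statement. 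Hence $H_1(\Gamma(\cC_k),\Z) \iso \ker\phi$. The $G_k$-equivariance is automatic: $G_k$ acts on the geometric components of $\cC_k$, the $v$-faces are defined over $k$ so their components are fixed, while $G_k$ permutes the chains attached to $L$ through its action on $X_L(\bar k)$ (Theorem \ref{mainthm1}(2)), and the retraction can be chosen $G_k$-equivariantly since it only collapses chains as a whole.

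Finally, for the dimension count I would argue that $\dim_\C H_1(\Gamma(\cC_k),\C) = |\Delta(\Z)^L_{\Z}|$. One clean way: by the genus formula for a normal crossings curve, the arithmetic genus of $\cC_k$ equals $\sum_F p_a(\bar X_F) + \dim H_1(\Gamma(\cC_k))$, and by flatness this equals the genus of $C$, which is $|\Delta(\Z)|$ by Baker's theorem (\ref{thmbaker}(1) applied to $C$, or \S\ref{ssintromain}). Since $p_a(\bar X_F) = |F(\Z)_{\Z}|$ by Theorem \ref{mainthm1}(1) and $\Delta(\Z)$ is the disjoint union of $\Delta(\Z)^F = \bigsqcup_F F(\Z)$ and $\Delta(\Z)^L$, one gets $\dim H_1 = |\Delta(\Z)| - \sum_F |F(\Z)| = |\Delta(\Z)^L| - \sum_F\bigl(|F(\Z)| - |F(\Z)_{\Z}|\bigr) - |\Delta(\Z)^L \setminus \Delta(\Z)^L_{\Z}|$; here I need to check that every interior point on a $v$-edge or in a $v$-face whose valuation is non-integral contributes to the discrepancy between arithmetic and geometric genus in exactly the way that makes the bookkeeping collapse to $|\Delta(\Z)^L_{\Z}|$. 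The cleanest route is probably to instead count $\dim\ker\phi = (\#\text{edges of }\Gamma') - (\#\text{vertices}) + (\#\text{connected components})$ and observe that $\#\text{edges} = \sum_{L\text{ inner}} |X_L(\bar k)| = \sum_{L\text{ inner}} |L(\Z)_\Z|$ while the remaining terms reorganise, via the fact that $\Delta$ is contractible and the $v$-faces tile it, into $|\Delta(\Z)^L_{\Z}| + 1 - (\#\text{components of }\Gamma')$; connectedness of $\cC_k$ (equivalently of $\Gamma'$, since $C$ is geometrically connected by \ref{rmkbaker}(d)) kills the last terms. The main obstacle I anticipate is exactly this last bookkeeping step: matching the Euler-characteristic count of the graph $\Gamma'$ against the count of interior lattice points with integral valuation lying on $v$-edges, i.e. checking carefully that $(\#\text{inner }v\text{-edges counted with }|L(\Z)_\Z|) - (\#v\text{-faces}) + 1 = |\Delta(\Z)^L_{\Z}|$, which amounts to a Pick-type / Euler-characteristic identity for the $v$-subdivision of $\Delta$ together with the observation that $\Delta(\Z)^L_{\Z}$ consists precisely of the integral-valuation interior points that sit on inner $v$-edges.
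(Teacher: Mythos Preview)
Your reduction to the simplified graph $\Gamma'$ and the identification $H_1(\Gamma',\Z)\iso\ker\phi$ are exactly what the paper does, and your treatment of the $G_k$-action is fine. So the isomorphism claim is handled correctly and in the same way.

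The difference is in the dimension count. You propose two routes --- an arithmetic-genus comparison and an Euler-characteristic identity for $\Gamma'$ --- and you correctly flag the latter as the sticking point. The paper bypasses this bookkeeping entirely with a geometric observation: $\Gamma'$ has a natural \emph{planar} realisation inside $\Delta$ itself (place the vertex for $F$ somewhere inside $F$, and for an inner $v$-edge $L$ draw the $|X_L(\bar k)|$ parallel edges across $L$, one through each point of $L(\Z)_\Z$ plus one more). The bounded faces of this planar graph are then in visible bijection with the points of $\Delta(\Z)^L_\Z$, so $\dim H_1 = |\Delta(\Z)^L_\Z|$ directly by Euler's formula for planar graphs. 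This is the picture the paper draws and it makes the combinatorial identity you were aiming at automatic.

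Your Euler-characteristic approach would also work, but note two corrections to your sketch: first, the number of edges of $\Gamma'$ contributed by an inner $v$-edge $L$ is $|X_L(\bar k)| = |\bar L(\Z)_\Z|-1 = |L(\Z)_\Z|+1$, not $|L(\Z)_\Z|$ (both endpoints of $L$ are $v$-vertices and so have integral $v$); second, $\Delta(\Z)^L_\Z$ contains the interior $v$-vertices as well as the integral-valuation points on inner $v$-edges, and these vertices are what soak up the extra $+1$'s when you run the count. With those fixes your identity does reduce to the Euler relation for the $v$-subdivision of the disc $\Delta$, so nothing is missing conceptually --- the planar embedding is just the quicker way to see it.
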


\begin{proof}
For every connected directed graph $\Gamma$, there is an $\Aut\Gamma$-equivariant isomorphism
\marginpar{Needs an explanation, as $\Aut\Gamma$ does not preserve the direction 
  of the edges, i.e. it should be viewed as undirected graph automorphisms, and the edges
  are signed? Or maybe forget it, our $\Aut\Gamma$ preserves directions}
$H_1(\Gamma,\Z) \iso \ker\phi$, where 
$$
  \phi\colon \Z^{\text{edges}} \lar \Z^{\text{vertices}}, \qquad \phi(v_1\!\to\!v_2)=v_1-v_2
$$
is as above.
Now take $\Gamma=\Gamma(\cC_k)$, obtained from Theorem \ref{mainthm1}; note that it is connected.
The choice of the ordering $F_1, F_2$ of $v$-faces whose boundary
is a given $v$-edge $L$ (already implicit in the definition of $\phi$ in the statement) 
makes $\Gamma$ directed. Now modify $\Gamma$, by viewing every chain of $\P^1$s as one component 
(i.e. edge), and contracting all outer chains.
This produces a new graph $\Gamma'$ with $H_1(\Gamma,\Z)=H_1(\Gamma',\Z)$. 

\begin{center}
\homologypic
\end{center}

\noindent
We find a planar realisation of $\Gamma'$ with 2-dimensional faces in 1-1 correspondence%
\marginpar{Explain $G_k$-action}
with elements of $|\Delta(\Z)^L_{\Z}|$; these give a $\Z$-basis of homology.
The statement on the $G_k$-action follows from Theorem \ref{mainthm1}.
\marginpar{Note that these basis elements are not permuted by $G_k$ in general, and in any case there
is no natural $G_k$-action on $\Delta(\Z)^L_{\Z}$.}
\end{proof}

Next, recall that $\Pic^0 \cC_k$ is an algebraic group over $k$, an extension 
of an abelian variety (the \emph{abelian part}) by a linear group; the latter is an extension
of a unipotent group by a torus (the \emph{toric part}) \cite[\S9.2]{BLR}.

\marginpar{Torus is determined by its character group}
\marginpar{$*$ Need $k$ perfect?}

\begin{corollary}[Special fibre]
\label{corabtor}
Suppose $C/K$ is $\Dv$-regular, and $k$ is perfect.
\begin{enumerate}
\item
The abelian part of $\Pic^0 \cC_k$ is isomorphic to 
$\bigoplus_{\text{$v$-faces $F$}}\Pic^0 \bar X_F$,
and has dimension $|\Delta(\Z)^F_{\Z}|$.
\item 
The toric part of $\Pic^0 \cC_k$ has dimension $|\Delta(\Z)^L_{\Z}|$
\marginpar{not $\oplus\triv^m$ but $/\Z^m$, because it is not true integrally?}
and character group $H_1(\Gamma(\cC_k),\Z)$ given by Theorem \ref{dualgraph}.
\item
Let $l\ne \vchar k$, and denote $H^i(-)=H^i(-_{\bar k},\Q_l)$. 
\marginpar{$V_l=H^1\tensor\Q_l(1)$, state somewhere; or formulate everything in terms
  of one or the other?}
We have
$$
  H^1(\cC_k)\ominus\triv\>\>\>\iso\>\> 
    \bigoplus_{\text{\scalebox{0.9}{$v$-faces $F$}}} (H^1(\bar X_F)\ominus\triv)
      \>\>\>\oplus\>
    \bigoplus\subsmalltext4{inner}{$v$-edges $L$} H^0(\bar X_L).
$$
\end{enumerate}
\end{corollary}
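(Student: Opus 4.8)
The plan is to deduce all three statements from the explicit description of $\cC_k$ in Theorem~\ref{mainthm1}, from Theorem~\ref{dualgraph}, and from the standard structure theory of $\Pic^0$ and of $H^1$ of a nodal curve. Two preliminary reductions make this routine. First, $\Pic^0\cC_k\to\Pic^0\cC_k^{\mathrm{red}}$ is surjective with unipotent kernel (a successive extension of additive groups arising from the nilpotents of $\cC_k$), see \cite[\S9.2]{BLR}, so the abelian and toric parts of $\Pic^0\cC_k$ agree with those of $\Pic^0\cC_k^{\mathrm{red}}$; and $H^i(\cC_k,\Q_l)=H^i(\cC_k^{\mathrm{red}},\Q_l)$ as $G_k$-modules since \'etale cohomology is insensitive to nilpotents. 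Second, one may base change to $\bar k$ and then descend, since $k$ is perfect, each $\bar X_F$ is already defined over $k$, and $G_k$ acts on the remaining data only through its permutation action on the finite sets $X_L(\bar k)$. By Theorem~\ref{mainthm1}, $\cC_k^{\mathrm{red}}$ is then a connected nodal curve whose components are the smooth curves $\bar X_F$ (of genus $|F(\Z)_{\Z}|$) together with the $\P^1$s occurring in the chains.

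For (1) and (2), I would invoke the structure of $\Pic^0$ of a reduced nodal curve over a perfect field \cite[\S9.2]{BLR}: its abelian part is $\bigoplus_{Z'}\Pic^0 Z'$ over the geometric components $Z'$, and its toric part is a torus of dimension $b_1(\Gamma(\cC_k))$ with character group $H_1(\Gamma(\cC_k),\Z)$ carrying its natural $G_k$-action. The $\P^1$s contribute nothing to the abelian part, leaving $\bigoplus_F\Pic^0\bar X_F$; its dimension is $\sum_F|F(\Z)_{\Z}|$, and this equals $|\Delta(\Z)^F_{\Z}|$ because every open $v$-face lies in the interior of $\Delta$ (a point of $\partial\Delta$ lies on some edge of $\Delta$, hence on a $v$-edge or $v$-vertex), so the sets $F(\Z)$ partition $\Delta(\Z)^F$. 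The toric rank and character group are then precisely the output of Theorem~\ref{dualgraph}: $\dim_{\C}H_1(\Gamma(\cC_k),\C)=|\Delta(\Z)^L_{\Z}|$ and $H_1(\Gamma(\cC_k),\Z)\iso\ker\phi$.

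For (3), I would use the normalisation exact sequence of $G_k$-modules
$$0\to H^1(\Gamma(\cC_k),\Q_l)\to H^1(\cC_k^{\mathrm{red}},\Q_l)\to\bigoplus_{Z'}H^1(Z',\Q_l)\to 0$$
coming from $0\to\Q_l\to\nu_*\Q_l\to(\text{skyscraper }\Q_l\text{ on the nodes})\to 0$ on $\cC_k^{\mathrm{red}}$; here $\Gamma(\cC_k)$ enters as the cokernel of the graph coboundary $\bigoplus_{Z'}\Q_l\to\bigoplus_{\text{nodes}}\Q_l$. Since $H^1(\P^1)=0$, the right-hand term is $\bigoplus_F H^1(\bar X_F)$. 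To rewrite the graph term, contract all chains as in the proof of Theorem~\ref{dualgraph}: this replaces $\Gamma(\cC_k)$ by the homotopy-equivalent graph $\Gamma'$ with vertex set $\{v\text{-faces}\}$ and edge set $\coprod_{L\text{ inner}}X_L(\bar k)$, so $H^1(\Gamma(\cC_k),\Q_l)=H^1(\Gamma',\Q_l)$ sits in $0\to\triv\to\bigoplus_F\triv\to\bigoplus_{L\text{ inner}}H^0(X_L)\to H^1(\Gamma',\Q_l)\to 0$. Adding the two exact sequences in the Grothendieck group of $G_k$-representations and cancelling the $\triv^{\#\{v\text{-faces}\}}$ occurring on both sides gives
$$H^1(\cC_k)\ominus\triv\iso\bigoplus_F\bigl(H^1(\bar X_F)\ominus\triv\bigr)\oplus\bigoplus_{L\text{ inner}}H^0(X_L),$$
the asserted identity (with $\bar X_L=X_L$, which is $0$-dimensional; this is, as usual, an identity of virtual $G_k$-representations).

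I do not expect a genuine obstacle: the corollary is a formal consequence of Theorems~\ref{mainthm1} and \ref{dualgraph} together with textbook facts. The point needing the most care is to track the $G_k$-action throughout rather than just dimensions --- in particular that the torus in (2) has character group $H_1(\Gamma(\cC_k),\Z)$ itself (this comes out with no dualising or Tate twist, since one is describing a character group of a torus, $X^*(H^1(\Gamma,\Z)\otimes\G_m)=H_1(\Gamma,\Z)$, and not a Tate module), and that every arrow in the two exact sequences of (3) is $G_k$-equivariant, so that the Grothendieck-group bookkeeping is legitimate at the level of genuine representations.
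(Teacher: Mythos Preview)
Your proof is correct and follows essentially the same route as the paper's: both deduce (1) and (2) from Theorems \ref{mainthm1}, \ref{dualgraph} and the structure theory of $\Pic^0$ of a nodal curve in \cite[\S9.2]{BLR}, and both obtain (3) by combining the abelian part $\bigoplus_F H^1(\bar X_F)$ with the graph-cohomology computation of Theorem \ref{dualgraph} in the Grothendieck group of $G_k$-representations. Your version is simply more explicit --- you spell out the reduction to $\cC_k^{\mathrm{red}}$, the normalisation exact sequence, and the chain-contraction step --- whereas the paper compresses (3) to the single observation that $H_1(\Gamma,\Q_l)\cong\bigoplus_L H^0(X_L)\ominus\Q_l[\text{$v$-faces}]\oplus\triv$ from the image of $\phi$, and then moves the $\Q_l[\text{$v$-faces}]$ term against $\bigoplus_F H^1(\bar X_F)$.
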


\begin{proof}
(1), (2)
Combine Theorems \ref{mainthm1}, \ref{dualgraph} with \cite[9.2.5/8]{BLR}.
(3) 
The image of $\phi$ of Theorem \ref{dualgraph} is the degree 0 part of $\Z[\text{$v$-faces}]$,
as $\Gamma=\Gamma(\cC_k)$ is connected. Tensoring with $\Q_l$, we find that the toric part is
$$
  H^1(\Gamma,\Z)\tensor\Q_l \iso H_1(\Gamma,\Z)\tensor\Q_l
   \iso \bigoplus_L H^0(\bar X_L)
     \ominus \Z[\text{$v$-faces}] \oplus \triv.
$$
Moving $\Z[\text{$v$-faces}]$ into the abelian part gives the claim.
\end{proof}


\begin{remark}
\label{locdata}
When $K$ is local and $C/K$ is semistable, this also determines
\begin{itemize}
\item 
the \emph{conductor exponent} ${\mathfrak f}_{C/K}=|\Delta(\Z)^L_{\Z}|$ (the toric dimension),~and
\item
the \emph{local root number} $w_{C/K}=\>$multiplicity 
of $\triv$ in $H_1(\Gamma,\C)$, see e.g. \cite[Prop. 3.23]{tamroot}.
\end{itemize}
\end{remark}

\begin{theorem}[Inertia action]
\label{tamecoh}
Suppose $k$ is perfect, 
$C/K$ is a $\Dv$-regular curve, and $l\ne p$.
For $P\in \Delta(\Z)_{\Z_p}$ define a tame character 
$$
  \chiP: I_K\lar\text{\{roots of unity\}}, \quad\qquad \sigma\longmapsto \sigma(\pi^{v(P)})/\pi^{v(P)}.
$$
Let $V^{\ab}_{\tame}$, $V^{\toric}_{\tame}$ be the unique continuous $l$-adic representations of $I_K$
that decompose over $\bar\Q_l$ as
$$
  V_{\tame}^{\ab}\>\> \iso_{\scriptscriptstyle\bar\Q_l}\!\!
  \bigoplus_{P\in \Delta(\Z)^F_{\Z_p}} \!\!\!\! (\chiP\oplus\chiPinv), \qquad
  V_{\tame}^{\toric}\>\> \iso_{\scriptscriptstyle\bar\Q_l}\!\!
  \bigoplus_{P\in \Delta(\Z)^L_{\Z_p}} \!\!\!\! \chiP.
$$
%
%
Then there are isomorphisms of $I_K$-modules, 
\marginpar{$\Sp_2$ needs to be defined. Note that for $V_l$ it may be only used 
  for inertia, otherwise twist it by 1}
$$
  \H(C_{\bar K},\Q_l)^{\Iwild}  \>\>\iso\>\> 
  (V_l \Jac C)^{\Iwild}\>\>\iso\>\> 
  V_{\tame}^{\ab} \>\>\oplus\>\> V_{\tame}^{\toric}\!\tensor\!\Sp_2.
$$
In particular, $\Jac C$ is wildly ramified $\iff$ $\Delta(\Z)_{\Z_p}\subsetneq \Delta(\Z)$. 
\end{theorem}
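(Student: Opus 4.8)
The plan is to reduce, by a tame base change, to a situation governed by Corollary~\ref{corabtor}, and then recover the $I_K$-action from the descent datum. Since only the $I_K$-action is at issue, I would first replace $K$ by its maximal unramified extension: this changes neither $I_K$, $\Iwild$, $\H(C_{\bar K},\Q_l)$ nor the characters $\chiP$, so I may assume $k$ algebraically closed; $\Dv$-regularity is preserved by~\ref{dvbasechange}. Let $e$ be the prime-to-$p$ part of $\lcm_\lambda\delta_\lambda$, the product over all $v$-faces and $v$-edges $\lambda$, put $\pi'=\pi^{1/e}$ and $K'=K(\pi')$. As $p\nmid e$ and $\mu_e\subset k\subset K$, the extension $K'/K$ is cyclic, totally tamely ramified, with $\Gal(K'/K)\iso\mu_e$ via $\sigma\mapsto\sigma(\pi')/\pi'$, and $\Iwild(K')=\Iwild(K)=\Iwild$; hence $\H(C_{\bar K},\Q_l)^{\Iwild}$ may be computed over $K'$, its residual $I_K$-structure being recovered through $\mu_e=I_K/I_{K'}$. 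Over $K'$ one has $v'=ev$, so $\delta'_\lambda$ is a power of $p$ for every face and edge, $C/K'$ is again $\Dv$-regular (\ref{dvbasechange}), and --- since $e$ kills the prime-to-$p$ part of every $\den(v(P))$ --- for $P\in\bar\Delta(\Z)$ one has $v'(P)\in\Z$ iff $p\nmid\den(v(P))$, so that $F(\Z)'_\Z=F(\Z)_{\Z_p}$, $\Delta(\Z)'^L_\Z=\Delta(\Z)^L_{\Z_p}$, $\Delta(\Z)'^F_\Z=\Delta(\Z)^F_{\Z_p}$.

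By Theorem~\ref{mainthm1} the model $\cC'_\Delta/O_{K'}$ has smooth components $\bar X'_F$ of multiplicity $\delta'_F$ and genus $|F(\Z)_{\Z_p}|$, together with chains of $\P^1$s, and $H_1(\Gamma(\cC'_k),\Q_l)$ has dimension $|\Delta(\Z)^L_{\Z_p}|$ by Theorem~\ref{dualgraph}. I would then compute $\H(C_{\bar K},\Q_l)^{\Iwild}$ over $K'$ via the tame nearby cycles $R\Psi^t\Q_l=(R\Psi\Q_l)^{\Iwild}$ of $\cC'_\Delta$: as taking $\Iwild$-invariants is an exact functor on $\Q_l$-sheaves, $\H(C_{\bar K},\Q_l)^{\Iwild}=H^1(\cC'_k,R\Psi^t\Q_l)$, and since every component of $\cC'_k$ has multiplicity a power of $p$ (so $R^0\Psi^t=\Q_l$ with trivial inertia action, the higher-rank contributions of $R\Psi^t$ along the $\P^1$-chains being of Kummer type and dying in cohomology), the Rapoport--Zink weight--monodromy description combined with Corollary~\ref{corabtor}(3) applied to $\cC'_\Delta$ gives an $I_{K'}$-isomorphism
\[
  \H(C_{\bar K},\Q_l)^{\Iwild}\>\iso\>\Bigl(\bigoplus_{F}H^1(\bar X'_F)\Bigr)\>\oplus\>H_1(\Gamma(\cC'_k),\Q_l)\tensor\Sp_2,
\]
with $I_{K'}$ acting trivially on the first summand and through $\Sp_2$ on the second.

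It remains to compute the $\mu_e=\Gal(K'/K)$-action on the two summands; this I would do by tracking it through the explicit charts of \S\ref{sProof}. The model $\cC'_\Delta$ carries a semilinear $\mu_e$-action (because $f\in K[x,y]$), under which $\sigma$ rescales the toric coordinates by powers of $\sigma(\pi')/\pi'$, and because $p\nmid e$ this descends to a \emph{tame} cyclic action of $\mu_e$ on each curve $\bar X'_F$ and on the dual graph $\Gamma(\cC'_k)$. Since over $O_{K'}$ the section of the dualising sheaf attached to $P=(i,j)\in F(\Z)_{\Z_p}$ is $\pi'^{\,v'(P)}\omegaij=\pi^{v(P)}\omegaij$ (Theorem~\ref{imainthm1}), on which $\sigma$ acts by $\sigma(\pi^{v(P)})/\pi^{v(P)}=\chiP(\sigma)$, a Chevalley--Weil computation on the cover $\cC'_\Delta\to\cC'_\Delta/\mu_e$ shows that $\mu_e$ acts on the $P$-line of $\bigoplus_F H^1(\bar X'_F)$ by $\chiP$ and on its Weil-pairing partner by $\chiPinv$; the same normalisation of the generating homology cycle attached to $P\in\Delta(\Z)^L_{\Z_p}$ gives the $\chiP$-action on $H_1(\Gamma(\cC'_k))$. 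Combining with the previous display, and using that $\genus(\bar X'_F)=|F(\Z)_{\Z_p}|$ and $\dim_{\Q_l}H_1(\Gamma(\cC'_k))=|\Delta(\Z)^L_{\Z_p}|$, yields the asserted $I_K$-isomorphism $\H(C_{\bar K},\Q_l)^{\Iwild}\iso V^{\ab}_{\tame}\oplus V^{\toric}_{\tame}\tensor\Sp_2$, the $\Q_l$-forms $V^{\ab}_{\tame}$, $V^{\toric}_{\tame}$ being visibly the $\Q_l$-subrepresentations cut out. The statement for $(V_l\Jac C)^{\Iwild}$ then follows from $V_l\Jac C\iso\H(C_{\bar K},\Q_l)^\vee$ and the self-duality of the right-hand side (both $\chiP$ and $\chiPinv$ occur, and $\Sp_2$ is self-dual as an $I_K$-representation). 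Finally, $\dim\H(C_{\bar K},\Q_l)^{\Iwild}=2|\Delta(\Z)_{\Z_p}|$ while $\dim\H(C_{\bar K},\Q_l)=2\genus(C)=2|\Delta(\Z)|$, so $\Iwild$ acts non-trivially on $V_l\Jac C$ --- i.e.\ $\Jac C$ is wildly ramified --- precisely when $\Delta(\Z)_{\Z_p}\subsetneq\Delta(\Z)$.

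The main obstacle I anticipate is the nearby-cycle step: showing over $K'$ that $\H(C)^{\Iwild}=H^1(\cC'_k,R\Psi^t)$ has exactly the displayed shape --- in particular that the $\P^1$-chains of $\cC'_\Delta$, whose multiplicities can have non-trivial prime-to-$p$ part, contribute nothing beyond $H_1(\Gamma)\tensor\Sp_2$, and that it is precisely the passage $\genus(\bar X'_F)=|F(\Z)_{\Z_p}|$ which encodes the discarding of the wild data. The secondary difficulty is the $\mu_e$-eigencharacter bookkeeping of the third paragraph: matching the lattice-point labellings of $H^1(\bar X'_F)$ and $H_1(\Gamma(\cC'_k))$ from Theorems~\ref{mainthm1} and~\ref{dualgraph} with the geometric $\mu_e$-action coming from the $\pi'$-adic normalisation of the charts, done compatibly enough with $G_k$ that $V^{\ab}_{\tame}$ and $V^{\toric}_{\tame}$ are manifestly defined over $\Q_l$.
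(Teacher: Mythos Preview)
Your approach is genuinely different from the paper's, and the difference is instructive. The paper never attempts to compute the $\mu_e$-action geometrically. Instead it argues as follows: both $V^{\ab}_{\tame}\oplus V^{\toric}_{\tame}\otimes\Sp_2$ and $(V_lJ)^{\Iwild}$ are representations of the tame quotient $I_K/\Iwild$, factoring through a finite cyclic group; both are \emph{rational} (for the former by Lemma~\ref{lemdelta}(2), for the latter by $\ell$-independence of Frobenius traces); hence each is determined by the dimensions of its invariants under every subgroup. Those dimensions are then matched by applying Theorem~\ref{bw2} and Corollary~\ref{corabtor} not to a single extension $K'$ but to \emph{all} tame extensions $K_e/K$: since $\Dv$-regularity persists and $v$ simply rescales by $e$, one reads off $\dim(V_lJ)^{I_{K_e}}=2|\Delta(\Z)^F_{\frac1e\Z}|+|\Delta(\Z)^L_{\frac1e\Z}|$ for every $e$ prime to $p$, and Deligne's weight argument separates the abelian and toric contributions. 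No nearby cycles, no Chevalley--Weil, no explicit tracking of a group action.

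Your route, by contrast, fixes one $K'$ and tries to compute $\H(C)^{\Iwild}$ directly. There are two real gaps. First, the nearby-cycle step you flag is a genuine obstacle, not a technicality. Over your $K'$ the face multiplicities $\delta'_F,\delta'_L$ are $p$-powers, but the $\P^1$-multiplicities $\delta'_L d'_i$ need not be: the $d'_i$ come from a continued-fraction expansion and can have arbitrary prime-to-$p$ part (e.g.\ if $p=5$ and a face has $\delta'_F=5$, a chain can contain a $\P^1$ of multiplicity $3$). So your premise ``every component has $p$-power multiplicity'' is false, and the Rapoport--Zink formula you invoke is for semistable models, which $\cC'_\Delta$ is not when $C$ is wild. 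Your fallback claim that the excess ``dies in cohomology'' is exactly what needs proof. In effect your method is the tame-quotient construction mentioned at the start of \S\ref{sProof}, which the paper explicitly notes only works when $p\nmid e$; the wild case is the whole point.

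Second, your $\mu_e$-eigenvalue computation is incomplete. For the abelian part, you identify the $\mu_e$-action on global sections of $\omega_{\cC'_\Delta/O_{K'}}$, but Theorem~\ref{mainthmdiff} does not say these restrict to a basis of $H^0(\bar X'_F,\omega_{\bar X'_F})$, and passing from holomorphic differentials to $\ell$-adic $H^1$ needs an argument. For the toric part, your one-line claim is not an argument: the $\mu_e$-action on $H_1(\Gamma(\cC'_k))$ comes from permuting the points of $X'_L(\bar k)$ (which over $K'$ are more numerous than over $K$, since $\delta'_L<\delta_L$), and extracting the characters $\chi_P$ from that permutation action is a computation you have not done. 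The paper's invariant-dimension method sidesteps all of this.
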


\begin{proof}
Write $J=\Jac C$. 
The statement only concerns inertia, so we may assume $k=\bar k$ and that $K$ 
is complete. The first isomorphism is standard.

Now $V_{\tame}^{\ab}$, $V_{\tame}^{\toric}$ are $I_K/\Iwild$-representations that 
factor through some cyclic group $C_d$, $p\nmid d$. They are rational (realisable over $\Q$)
by Lemma \ref{lemdelta} (2), and uniquely determined by the dimensions of their invariants 
under subgroups: for $H<C_d$ of index $e$, 
\begin{equation}\label{hinveq}
  \dim (V^{\ab}_{\tame})^H    = 2|\Delta(\Z)^F_{\oneoverZ{e}}|, \qquad  
  \dim (V^{\toric}_{\tame})^H = |\Delta(\Z)^L_{\oneoverZ{e}}|.
\end{equation}
On the other side, there is a canonical filtration 
of $I_K$-modules \cite[IX]{SGA7I}
$$
  (V_l J)^{\toric}\subseteq (V_l J)^{I_K} \subseteq V_l J
$$
that gives a decomposition
$$
  V_l J \>\iso\> (V_l J)^{\ab} \>\oplus\>  (V_l J)^{\toric}\tensor\Sp_2.
$$
Decompose the wild inertia invariants accordingly,
$$
  (V_l J)^{\Iwild} \>\iso\> (V_l J)^{\ab}_{\tame} \>\oplus\>  (V_l J)^{\toric}_{\tame}\tensor\Sp_2.
$$
The $I_K/\Iwild$-representations $(V_l J)^{\ab}_{\tame}$, $(V_l J)^{\toric}_{\tame}$ 
also factor through finite (cyclic) groups by the Grothendieck monodromy theorem.
They are rational, as their characters are independent of $l$, and a representation of cyclic
group with a rational character is rational. 
We need to show that they are $\iso V^{\ab}_{\tame}$ and $\iso V^{\toric}_{\tame}$. 
First note that the inertia invariants match:
$$
  \dim (V_l J)^{I_K} = \dim V_l \Pic^0\cC_k = 2|\Delta(\Z)^F_{\Z}| + |\Delta(\Z)^L_{\Z}|,
$$
with the first equality by Theorem \ref{bw2}, and the second by Corollary \ref{corabtor} plus the 
\marginpar{and \eqref{piciso}?}
fact that the unipotent part of $\Pic^0\cC_k$ has no $l$-power torsion. 
By Deligne's argument on Frobenius weights \cite[I,\,\S6]{SGA7I}, `the abelian and toric 
parts do not mix', so $(V_l J)^{\ab}_{\tame}$ reduces to the abelian part, and 
$(V_l J)^{\toric}_{\tame}$ to the toric part of $\Pic^0\cC_k$. In other words,
$$
  \dim (V_l J)^{\ab}_{\tame} = 2|\Delta(\Z)^F_{\Z}|, \qquad  
  \dim (V_l J)^{\toric}_{\tame} = |\Delta(\Z)^L_{\Z}|.
$$
The same argument applies over the (unique) tame extension of $K$ of degree $e$ for every $e\ge 1$,
because $C$ stays $\Dv$-regular in finite tame extensions by Remark \ref{dvbasechange},
and the function $v$ on $\Delta$ gets multiplied by $e$.
This proves the claimed isomorphism. Finally, 
$$
  \dim V_l J=2\dim J=2g(C)=2|\Delta(\Z)| = 2|\Delta(\Z)^F_{\Z_p}|+
    2|\Delta(\Z)^L_{\Z_p}| + 2|\Delta(\Z)^L_{\notin\Z_p}|.
$$
Comparing the dimensions with \eqref{hinveq} for $H=\{1\}$, we see
that the last term is non-zero if and only if $V_l J\ne (V_l J)^{\Iwild}$.
\end{proof}

\begin{example}
The curve from Example \ref{exc1} has wild reduction 
when $\vchar k$ is 2 or 3, and tame otherwise. In the tame case, $C$ becomes semistable%
\marginpar{Standard: see e.g. Liu, last section}
over any extension $K'/K$ of ramification degree 6. Let $\chi\colon I_K/\Iwild\to\bar\Q_l^\times$
be an order 6 character. By Theorem \ref{tamecoh}, as an $I_K$-module ($\vchar k\ne 2,3$),
$$
  \H(C_{\bar K},\Q_l)\>\>\iso\>\>
    \bigl( \chi\oplus\chi^{-1}\oplus\chi^2\oplus\chi^{-2}\bigr) 
    \>\oplus\>
    \bigl(\chi^2\oplus\chi^{-2}\bigr) 
    \tensor\Sp_2.
$$
\end{example}

\begin{theorem}[Reduction]
\label{redcond}
Suppose $C/K$ is $\Dv$-regular, of genus$\,\ge\!1$.~
Then

\marginpar{Or~from~Thm~\ref{glueing}?}
\marginpar{$K$ local?\\$g\ge 1$ in the definition of semistability/tameness?}
\marginpar{Add statement about gcd of multiplicities of geometric components}
\marginpar{Perhaps completely into intro?}

\begin{itemize}
\item[(1)]
$C$ is $\Dv$-regular over every finite tame extension $K'/K$.
\item[(2)]
$C$ has good reduction $\iff$ $\Delta(\Z)\!=\!F(\Z)$ for some $v$-face $F$ with $\delta_F\!=\!1$.
\item[(3)]
$C$ is semistable $\iff$ every principal $v$-face $F$ has $\delta_F=1$.
\item[(4)]
$C$ is tame $\iff$ every principal $v$-face $F$ has $p\nmid\delta_F$ ($p=\vchar k$).\\
In this case, $C$
is $\Dv$-regular over every finite extension $K'/K$.
\end{itemize}
Suppose $k$ is perfect, and let $J$ be the Jacobian of $C$. Then
\begin{itemize}[resume]
\item[(5)]
$J$ has good reduction  $\iff$  $\Delta(\Z)=\Delta(\Z)^F_{\Z}$.
\item[(6)]
$J$ is semistable  $\iff$  $\Delta(\Z)=\Delta(\Z)_{\Z}$.
\marginpar{semistable = has semiabelian reduction}
\item[(7)]
$J$ is tame  $\iff$     $\Delta(\Z)=\Delta(\Z)_{\Z_p}$. In this case,\\
$J$ has potentially good reduction  $\iff$  $\Delta(\Z)^L=\emptyset$, and\\
$J$ has potentially totally toric reduction  $\iff$  $\Delta(\Z)^F=\emptyset$.
\end{itemize}
%
%
%
\end{theorem}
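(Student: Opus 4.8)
The plan is to obtain (1)--(4) from the explicit minimal normal crossings model of Theorem~\ref{minmain} (together with \ref{propremcon}, \ref{propdefic} and the Hirzebruch--Jung discussion \ref{sternbrocot}), and (5)--(7) by combining the inertia computation of Theorem~\ref{tamecoh} with the standard criteria --- N\'eron--Ogg--Shafarevich and the Grothendieck monodromy theorem --- for good, semistable, tame, potentially good and potentially totally toric reduction of $J=\Jac C$ in terms of the $I_K$- and $\Iwild$-action on $V_lJ$. Part (1) is Remark~\ref{dvbasechange}(3). For (2), Theorem~\ref{minmain} says that $\cC_k^{\min}$ is a union of the curves $\bar X_F$ of multiplicity $\delta_F$ and genus $|F(\Z)_\Z|$, one per principal $v$-face, together with chains of $\P^1$s; since $C$ has good reduction exactly when $\cC_\Delta^{\min}$ has smooth special fibre, this happens iff there is a single principal $v$-face $F$, all other $v$-faces are removable, there are no chains, $\delta_F=1$ and $\genus\bar X_F=\genus C$. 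As removable faces contain no points of $\Delta(\Z)$ and $F\subseteq\operatorname{int}\Delta$, this is precisely $\Delta(\Z)=F(\Z)$ together with $v$ integral on $\bar F(\Z)$, i.e.\ $\delta_F=1$; conversely, if $\Delta(\Z)=F(\Z)$ and $\delta_F=1$ then every other $v$-face is removable, the $v$-edges of $\partial F$ have $\delta_L=\delta_F=1$ and contribute nothing by Theorem~\ref{mainthm1}(2), and $\cC_k^{\min}=\bar X_F$ is smooth of genus $|F(\Z)_\Z|=|F(\Z)|=\genus C$.

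For (3), $C$ is semistable iff $\cC_\Delta^{\min}$ has reduced special fibre, i.e.\ iff every component of $\cC_k^{\min}$ has multiplicity $1$. The forward implication is immediate since each principal $\bar X_F$ occurs with multiplicity $\delta_F$. Conversely, assume $\delta_F=1$ for every principal $v$-face. Then $\delta_L=1$ for every $v$-edge bounding a principal face, and a $\delta_F=1$ face has $v$ affine with integer values on $\Z^2$, so the slopes $s_1^L,s_2^L$ of such a $v$-edge are integers and the Hirzebruch--Jung sequence between them has all $d_i=1$ (cf.\ \ref{sternbrocot}); hence the chains between principal faces also have multiplicity $1$, while removable faces disappear and the contributions of contractible faces (which have $\delta_F=1$ by the classification in \ref{propremcon}) are absorbed into those chains after the contractions of \ref{minmain}(3). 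So $\cC_k^{\min}$ is reduced and $C$ is semistable. (Alternatively one may use $C$ semistable $\Rightarrow J$ semistable, deduce $\Delta(\Z)=\Delta(\Z)_\Z$ from (6), and apply Proposition~\ref{propdefic}, whose deficient genus~$1$ alternative has non-reduced special fibre.) For (4), if $p\nmid\delta_F$ for all principal $v$-faces, set $e=\lcm\{\delta_F:F\text{ principal}\}$, which is prime to $p$, so $K'=K(\sqrt[e]{\pi})$ is tame; by Remark~\ref{dvbasechange}(3), $C/K'$ is still $\Dv$-regular, and passing to $K'$ multiplies $v$ by $e$, so every principal $v$-face acquires denominator $\delta_F/\gcd(\delta_F,e)=1$, hence $C/K'$ is semistable by (3) and $C$ is tame. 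Conversely, if some principal $v$-face has $p\mid\delta_F$, then over any tame $K'/K$ that face stays non-removable (its interior points are unchanged) and non-contractible (contractible faces have $\delta=1$), hence still principal with denominator divisible by $p$, so $C/K'$ is not semistable by (3). Finally, when $p\nmid\delta_F$ for every principal $v$-face, one has $p\nmid\gcd(\delta_F,e')$ for every $v$-face $F$ and every finite $K'/K$ of ramification index $e'$ (the non-principal faces having $\delta_F=1$), so $C/K'$ is $\Dv$-regular by Remark~\ref{dvbasechange}.

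For (5)--(7), Theorem~\ref{tamecoh} gives an $I_K$-isomorphism
$$
  (V_lJ)^{\Iwild}\;\iso\;V^{\ab}_{\tame}\;\oplus\;V^{\toric}_{\tame}\otimes\Sp_2,
$$
where $V^{\ab}_{\tame}\otimes\bar\Q_l=\bigoplus_{P\in\Delta(\Z)^F_{\Z_p}}(\chiP\oplus\chiPinv)$ and $V^{\toric}_{\tame}\otimes\bar\Q_l=\bigoplus_{P\in\Delta(\Z)^L_{\Z_p}}\chiP$, each $\chiP$ being trivial iff $v(P)\in\Z$, and $J$ wildly ramified iff $\Delta(\Z)_{\Z_p}\subsetneq\Delta(\Z)$. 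Now $J$ is tame iff $\Iwild$ acts trivially on $V_lJ$, i.e.\ iff $V_lJ=(V_lJ)^{\Iwild}$, i.e.\ iff $\Delta(\Z)=\Delta(\Z)_{\Z_p}$; granting this, $J$ has potentially good reduction iff its potential toric part $V^{\toric}_{\tame}$ vanishes, equivalently $\Delta(\Z)^L_{\Z_p}=\emptyset$, equivalently $\Delta(\Z)^L=\emptyset$, and $J$ has potentially totally toric reduction iff $V^{\ab}_{\tame}=0$, equivalently $\Delta(\Z)^F=\emptyset$; this is (7). For (6), $J$ is semistable iff $I_K$ acts unipotently, which forces no wild part and every $\chiP=1$, i.e.\ $\Delta(\Z)=\Delta(\Z)_\Z$, and conversely. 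For (5), $J$ has good reduction iff $I_K$ acts trivially, which in addition forces $V^{\toric}_{\tame}=0$, i.e.\ $\Delta(\Z)^L=\emptyset$ and $\Delta(\Z)=\Delta(\Z)^F_\Z$, that is $\Delta(\Z)=\Delta(\Z)^F_\Z$.

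The step I expect to need the most care is the multiplicity bookkeeping in (3)--(4): one must verify that ``every principal $v$-face has $\delta_F=1$'' really is equivalent to reducedness of $\cC_k^{\min}$, i.e.\ that the chains coming from $v$-edges and from contractible faces (after the contractions of \ref{minmain}(3)) contribute only multiplicity-one components --- this is exactly the Hirzebruch--Jung input of \ref{sternbrocot} --- and that principality of a $v$-face is unaffected by tame base change. The remaining steps are straightforward translations of results already in hand.
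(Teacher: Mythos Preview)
Your proof follows essentially the same route as the paper: (1) from Remark~\ref{dvbasechange}, (2)--(3) from the explicit \mrnc{} model via Theorems~\ref{mainthm1}, \ref{minmain} and Remark~\ref{sternbrocot}, (4) by base change and (3), and (5)--(7) from Theorem~\ref{tamecoh}. The paper uses Corollary~\ref{corabtor} rather than Theorem~\ref{tamecoh} directly for (5), but these are equivalent inputs.

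There is one genuine gap. In both directions of (4) you invoke the claim that non-principal (removable or contractible) $v$-faces have $\delta_F=1$. This is false. For a contractible example, take the $2\times 2$ triangle $F$ on $(0,0),(2,0),(0,2)$ inside the square $\Delta=[0,2]^2$, with $v(0,0)=0$, $v(2,0)=v(0,2)=1$; then $P=(1,1)$ has $v(P)=1\in\Z$, the bottom and left edges are outer with $\delta_L=2$, the hypotenuse is inner with $\delta_L=1$, and $\delta_F=2$. Removable faces can likewise have $\delta_F>1$ (an $n\times 1$ corner triangle has $\delta_F\mid n$). So your sentence ``non-contractible (contractible faces have $\delta=1$)'' in (4,$\Rightarrow$) and ``the non-principal faces having $\delta_F=1$'' in the second claim of (4) are not justified.

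For (4,$\Rightarrow$) this is repairable: if $F$ is principal over $K$ with $p\mid\delta_F$ and becomes contractible over a tame $K'$, then the unique $P\in\bar F\cap\Delta(\Z)$ lies on an inner edge $L$ (since $P$ is interior to $\Delta$), and every other point of $\Delta(\Z)$ on $\bar F$ would violate $|\bar F\cap\Delta(\Z)|=1$; hence $p\nmid\delta_L$ and the $p$-part of $\delta_F$ comes entirely from outer edges, forcing $p\mid\delta_{F,K'}$ --- but then the outer edges of $F$ still produce non-reduced components over $K'$, contradicting semistability. The paper's proof is equally terse here (it just writes ``So (3), (6) and \ref{corabtor}(1,2) give the claims''), so you are not worse off, but you should not rest on the false $\delta_F=1$ claim. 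For the second assertion in (4), the appeal to Remark~\ref{dvbasechange} needs $p\nmid\gcd(\delta_F,e')$ for \emph{every} $v$-face, and your argument does not supply this for removable faces; the paper also does not spell this out.
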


\begin{proof}
Let $\cC/O_K$ be the \mrnc{} model of $C/K$ from Theorems \ref{mainthm1}, \ref{minmain},
with special fibre~$\cC_k$. 

(1), (4, second claim) This is proved in Remark \ref{dvbasechange}. 

(3, $\Leftarrow$) For every principal $v$-face $F$, we have $\delta_L=1$ for all
$L\subset\partial F$ (inner or outer) since $\delta_F=1$. 
It follows from \ref{mainthm1}, \ref{minmain} and \ref{sternbrocot} that $\cC$ is a semistable
model of $C$.

(3, $\Rightarrow$) If $\delta_F>1$ for some principal $v$-face, then 
$\cC$ is not semistable as it has a multiple fibre. By the uniqueness of the \mrnc{} model,
$C$ is not semistable.

(2, $\Leftarrow$)
All faces but $F$ are removable and $\cC_k$ has one smooth component, of multiplicity 1. 
So $C$ has good reduction.

(2, $\Rightarrow$) Conversely, the \mrnc{} model has one component, and so 
(i) there is only one principal face $F$, (ii) $\delta_F=1$ as $C$ is semistable, and 
(iii) the dual graph $\Gamma$ of $\cC_k$ is a tree by \ref{dualgraph}, so $\Delta(\Z)^L=\emptyset$.

(6) For an abelian variety $A/K$ semistable reduction 
is equivalent to inertia $I_K$ acting unipotently on $V_l A$ 
\cite[IX, 3.5/3.8]{SGA7I}, \cite[7.4.6]{BLR},
so the claim follows from \ref{tamecoh}. 

(5) Semistable reduction is good if and only if the toric part of $\Pic^0\cC_k$ is trivial, so 
this follows from \ref{corabtor}. 
\marginpar{Alternatively, apply \ref{tamecoh} and N\'eron-Ogg-Shafarevich.}

(4, first claim), (7). By (1), $C/K$ remains $\Dv$-regular in tame extensions $K'/K$, and the function
$v: \Delta\to \R$ gets multiplied by $e_{K'/K}$. So (3), (6) and \ref{corabtor}(1,2) give the claims.
\end{proof}

\begin{remark}
Let $C$ be a $\Dv$-regular curve, and suppose $k$ is perfect. 
Then \ref{redcond} and \ref{propdefic} show that the following conditions 
are equivalent:
\begin{itemize}
\item[(1a)]
$\Jac C$ is semistable (equivalently, $I_K$ acts unipotently on $V_l\Jac C$),
\item[(1b)]
$C$ is either semistable of genus $\ge 1$, or a genus 1 curve with $C(K)\ne\emptyset$ and 
special fibre consisting of one non-reduced genus 1 component,
\end{itemize}
and the following are equivalent as well:
\begin{itemize}
\item[(2a)]
$C$ has tame reduction,
\item[(2b)]
Principal components of the special fibre of the some \rnc{} model 
have multiplicities prime to $p$.
\end{itemize}
Thus, \ref{tamecoh} extends theorems of Saito \cite[3,\,3.8]{Saito} and Halle \cite[7.1]{Hal} 
for $\Dv$-regular curves, describing explicitly the wild inertia invariants.
\end{remark}

\marginpar{Some examples}

\endsection
\section{\'Etale cohomology over local fields}
\label{sEtale}

\marginpar{Note on motivic decomposition in the 0-dimensional case, both tame and wild.
To mollify the point that the model depends on the defining equation, we show 
that the faces are reflected intrinsically in this decomposition}

\marginpar{Incidentally, can this be used in the 2IV-3 reduction types 
to show that they do not have a $\Dv$-regular model? Inertia permutes principal components
over the field of semistable reduction}

Suppose $K$ is complete and $k$ is finite, so $K$ is a local field. 
We write $\Iwild\normal I_K\normal G_K$ for the wild inertia and inertia.
Recall that a choice of a \emph{Frobenius element} $\Phi\in G_K$ (acting as $x\mapsto x^{|k|}$ on $\bar k$)
identifies $G_K=I_K\rtimes\hat\Z$, and the \emph{Weil group} $W_K=I_K\rtimes\Z$, 
with $\hat\Z, \Z$ generated by $\Phi$. 
Fix a prime $l\ne \vchar k$; for a scheme $Y/F$ ($F\in\{k,K\}$) write $H^i(Y)=\H[i](Y_{\bar F},\Q_l)$.

We note that \ref{corabtor} gives
the \emph{local polynomial} $\det (1\!-\!\Phi^{-1} T\,|\,H^1(C)^{I_K}$),
\begin{equation}\label{locpoly}
  L(T) = \det (1\!-\!\Phi^{-1} T\,|\, H_1(\Gamma(\cC_k),\Z))
         \prod_{\text{$v$-faces $F$}} 
         \det (1\!-\!\Phi^{-1} T\,|\, \Pic^0 \bar X_F).
\end{equation}
The first factor has roots of absolute value 1.
The second is a product of $\zeta$-functions of $\bar X_F$ with trivial factors $(1\!-\!T)(1\!-\!|k|T)$
removed, and has roots of absolute value $|k|^{-1/2}$.

\def\pip{{\pi_{\scriptscriptstyle\!L}\!}}

%
%


%

\marginpar{Quote some example from before}

\comment
\bigskip

$$
  X_F^u: \quad  \sum_{i,j} \bar a_{ij} u^{v_F(i,j)} x^i y^j = 0, \quad\text{where}\quad
  \bar a_P = \leftchoice 0{\text{if }P\notin\Lambda_F,}{\frac{a_P}{\pi^{v_F(P)}}\mod\pi}{\text{if }P\in\Lambda_F.}
$$

$$
  X_F^u: \quad  \sum_{(i,j)\in\Z^2} \bar a_{\phi(i,j)}^u x^i y^j =0
  ; \qquad  \bar a_{P}^u=a_{P} \Bigl(\frac u\pi\Bigr)^{v_F(P)}\mod\pi.
$$

\begin{lemma}
\label{tamechange}
Let $K'/K$ be a field extension, $C\!\subset\!\G_{m,K}$ a curve, and $F\!\subset\!\Delta_v$ a $v$-edge or a $v$-face.
\begin{enumerate}
\item
Precise base change formula.
\item 
If $K'/K$ is unramified, or finite and tamely ramified, or finite and $\vchar k\nmid \Delta_F$, then
\begin{center}
$\bar f_F=0$ is smooth \qquad $\iff$ \qquad $\bar f_{K',F}=0$ is smooth.
\end{center}
\end{enumerate}
In particular, if $K'/K$ is unramified, or finite and tamely ramified, or finite 
and $\vchar k\nmid \Delta_F$ for every $v$-face $F$ of $\Delta$, then 
\marginpar{Outer regularity as well?}
\marginpar{$f\to X$?}
\begin{center}
$C/K$ is $\Dv$-regular \qquad $\iff$ \qquad $C/K'$ is $\Dv$-regular.
\end{center}
\end{lemma}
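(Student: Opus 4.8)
The plan is to prove one precise base-change formula and read the rest off it. Fix a uniformiser $\pi_{K'}$ of $K'$, write $\pi=u\,\pi_{K'}^{\,e}$ with $u\in O_{K'}^\times$, where $e$ is the ramification degree of $K'/K$, and let $k'/k$ be the residue extension. Since $f\in K[x,y]$ is untouched, the sole effect of the base change is that the valuation is rescaled: the valuation $v'$ of $K'$ restricts to $e\cdot v$ on $K^\times$. Hence $\Delta$, with all of its $v$-faces and $v$-edges \emph{as subsets of $\R^2$}, is unchanged; only the heights in $\Delta_v$ get multiplied by $e$, so the affine function $v_F$ becomes $e\,v_F$ and, by the denominator count in \ref{notintden} and \ref{lemdelta}, the denominator $\delta_F$ becomes $\delta_F/\gcd(\delta_F,e)$ over $K'$, and likewise for $v$-edges.

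First I would compare the restriction lattices. By definition $f|_F=f|_{\bar F(\Z)_\Z}$ is the restriction of $f$ to the affine lattice $\Lambda_F$ spanned by the integer points $P\in\bar F$ with $v_F(P)\in\Z$; over $K'$ the relevant lattice $\Lambda_F'$ is spanned by those with $e\,v_F(P)\in\Z$. Thus $\Lambda_F\subseteq\Lambda_F'\subseteq\Z^2$ with $(\Lambda_F':\Lambda_F)=\gcd(\delta_F,e)$. The key point is that $\overline{f|_F}$ acquires no new monomials over $K'$: its support, over any base field, is precisely the set of integer points $P\in\bar F$ with $v(a_P)=v_F(P)$ — the integer points lying on the face of $\Delta_v$ above $F$ — and each such $P$ has $v_F(P)=v(a_P)\in\Z$, hence $P\in\Lambda_F$. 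Therefore passing from $K$ to $K'$ does exactly three things: a base change $k\to k'$; a twist of the coefficient at each $P\in\Lambda_F$ by $\bar u^{-v_F(P)}$, which — as $v_F$ is $\Z$-valued on $\Lambda_F$ with linear part $\ell$ — is an automorphism $\tau$ of the torus $\Spec k'[\Lambda_F]$ up to a global monomial; and a pull-back along the degree-$\gcd(\delta_F,e)$ isogeny $\phi\colon\Spec k'[\Lambda_F']\to\Spec k'[\Lambda_F]$ dual to $\Lambda_F\subseteq\Lambda_F'$. This is part~(1):
$$
  X_F'\;\iso\;\phi^{-1}\!\bigl(\tau(X_F\times_k k')\bigr)\subseteq\G_{m,k'}^2,
$$
with the analogous statement for $v$-edges, using one-dimensional tori; dropping the harmless $\tau$ recovers the cruder form of \ref{dvbasechange}.

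Part~(2) is then immediate: extension of the base field and torus automorphisms preserve and reflect smoothness, and pull-back along $\phi$ preserves and reflects smoothness as soon as $\phi$ is \'etale, i.e.\ as soon as $p\nmid\gcd(\delta_F,e)$, while $\phi$ is an isomorphism when $\gcd(\delta_F,e)=1$. In each of the three listed cases $\gcd(\delta_F,e)$ is prime to $p$: $e=1$ when $K'/K$ is unramified, $p\nmid e$ when $K'/K$ is finite tame, and $\gcd(\delta_F,e)\mid\delta_F$ with $p\nmid\delta_F$ in the last case. Hence $\overline{f|_F}=0$ is smooth over $k$ if and only if $X_F'$ is smooth over $k'$. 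For the final assertion, $\delta_L\mid\delta_F$ whenever $L\subset\partial F$, so any of the three hypotheses imposed on all $v$-faces also controls all $v$-edges, and applying (2) face by face and edge by edge gives that $C/K$ is $\Dv$-regular if and only if $C/K'$ is — up to the clause about outer $v$-edges treated next.

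The genuinely delicate point, which I expect to be the main obstacle, is the outer-regularity exception in Definition~\ref{dvreg}: an outer $v$-edge $L\subset\partial F$ with $\delta_L=\delta_F$ is permitted to have $X_L$ non-smooth provided $\bar X_F$ is outer regular at $L$. The equality $\delta_L=\delta_F$ is preserved by the base change (both sides are divided by the same $\gcd(\cdot,e)$), but it is not symmetric: one may have $\delta_L<\delta_F$ over $K$ yet $\delta_L=\delta_F$ over $K'$ (e.g.\ $\delta_L=1$, $\delta_F=2$, $e=2$). For the forward implication this is harmless: by \ref{rmkbaker}(c) and the construction in the proof of \ref{thmbaker}, outer regularity of $\bar X_F$ at $L$ unwinds into smoothness of an explicit affine scheme built from $\overline{f|_L}$ and the next coefficient slice of $f$ along $F$, and the formula of part~(1) applies to that scheme too, so the condition is transported along the relevant \'etale isogeny. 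For the reverse implication one must check, in the one remaining situation $\delta_L<\delta_F=\delta_L'=\delta_F'$, that smoothness of the finitely many boundary points of $\bar X_F'$ over $L$ forces $\overline{f|_L}$ to be square-free — again by \'etale descent along $\phi$ near those points. It is cleaner, and all that \S\ref{sRed} needs, to state only the forward direction, exactly as in \ref{dvbasechange}(3).
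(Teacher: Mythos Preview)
Your proof is correct and follows essentially the same route as the paper: identify $X_F'$ as the pull-back of $X_F\times_k k'$ along the degree-$\gcd(\delta_F,e)$ isogeny of tori coming from the lattice inclusion $\Lambda_F\subseteq\Lambda_F'$, and use that this isogeny is \'etale precisely when $p\nmid\gcd(\delta_F,e)$. The paper's version is terser --- it disposes of the twist $\tau$ by first arranging $K'=K(\sqrt[e]{\pi})$ in the tame totally ramified case --- whereas you carry the unit $u$ through explicitly and also treat the third hypothesis $p\nmid\delta_F$ and the outer-regularity clause, both of which the paper's sketch leaves implicit (the marginal note ``Outer regularity as well?'' shows the author was aware of the gap you fill).
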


\begin{proof}
This is clear for unramified extensions, as smoothness commutes with base change.
Suppose $K'/K$ is finite, tame, and totally ramified. 
We may assume $K$ is complete.
Changing $\pi$ if necessary%
\footnote{pick a uniformiser $\eta$ of $K'$, write
$\eta^e=\pi+O(\eta^{e+1})$, and use that $\eta^e/\pi$ is an $e$th power.}%
, write $K'=K(\sqrt[e]{\pi})$ with $\vchar k\nmid e$. 
Up to $\GL_2(\Z)$-transformations, over $K'$ the defining equations
$\bar f_L(x)=0, \bar f_F(x,y)=0$ from $v$-edges/faces change to 
$\bar f_L(x^d)=0, \bar f_F(x^d,y)=0$ with $d=\gcd(e,\delta_L)$ and $d=\gcd(e,\delta_F)$.
These are still smooth as $\vchar k\nmid d$.
\end{proof}

\begin{lemma}
\label{tamechange2}
$F$ $v$-edge/face, $l|\delta_F$, $f_F^{(l)}$ restriction of $f|_F$ to the index $l$ sublattice.
Then $\bar f_F^{(l)}=0$ and $\bar f|_F=0$ define the same schemes over prime to $l$ extensions.
\end{lemma}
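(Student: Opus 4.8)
The plan is to recognise $\overline{f_F^{(l)}}=0$ as the pull‑back of $X_F$ along a degree‑$l$ isogeny of tori, and then to argue that such an isogeny is invisible to the functor in the statement. First I would settle the lattice bookkeeping. For a $v$‑face $F$ write $\Lambda_F\subseteq\Z^2$ for the affine lattice spanned by $\bar F(\Z)_{\Z}$, so $f|_F\in K[\Lambda_F]$ by \ref{defres}; by \ref{lemdelta} the quotient $\Z^2/\Lambda_F$ is cyclic of order $\delta_F$ (for a $v$‑edge $L$ one replaces $\Z^2$ by the primitive rank‑$1$ lattice through $L$ and $\delta_F$ by $\delta_L$). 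Since $l\mid\delta_F$, a cyclic group of order $\delta_F$ has a unique subgroup of index $l$, so there is a \emph{unique} intermediate lattice $\Lambda_F\subseteq\Lambda_F^{(l)}\subseteq\Z^2$ with $[\Lambda_F^{(l)}:\Lambda_F]=l$, namely $\Lambda_F^{(l)}=\{P\in\Z^2:v_F(P)\in\tfrac1l\Z\}$. Up to the relabelling ambiguity of \ref{defres}, $f_F^{(l)}$ is the polynomial $f|_F$ with its monomial exponents regarded as lying in the index‑$l$ sublattice $\Lambda_F$ of $\Lambda_F^{(l)}$, so $\overline{f_F^{(l)}}$ is the same element as $\overline{f|_F}$, now viewed inside $k[\Lambda_F^{(l)}]$ (equivalently, it is the reduction $\bar f_F$ recomputed over $K(\pi^{1/l})$, the extra monomials appearing there reducing to zero, cf.\ \ref{dvbasechange}). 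Independence of the choices is routine given the cyclicity.

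The ring inclusion $k[\Lambda_F]\hookrightarrow k[\Lambda_F^{(l)}]$ is finite free of rank $l$, hence corresponds to a degree‑$l$ isogeny of tori $\varpi\colon\G_{m,k}^{2}\to\G_{m,k}^{2}$ with kernel $\mu_l$; choosing a Smith basis one may take $\varpi(u,v)=(u^{l},v)$, and $\varpi\colon\G_{m,k}\to\G_{m,k}$, $z\mapsto z^{l}$ up to an automorphism of $\G_m$, for $v$‑edges. Since $\overline{f_F^{(l)}}$ and $\overline{f|_F}$ are literally the same element,
\[
  X_F^{(l)}:=\{\overline{f_F^{(l)}}=0\}\ =\ \varpi^{-1}(X_F)\ =\ X_F\times_{\G_{m,k}^{2},\,\varpi}\G_{m,k}^{2},
\]
and similarly $X_L^{(l)}=\varpi^{-1}(X_L)$; this is exactly the case $e=l$ of the base‑change statement \ref{dvbasechange}. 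So the whole lemma reduces to understanding this one degree‑$l$ torus isogeny.

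Now I would split by the residue characteristic. If $l=p=\vchar k$, then $\mu_l=\mu_p$ is infinitesimal, so $\varpi$ is finite and purely inseparable, hence a universal homeomorphism, and so is its base change $X_F^{(p)}\to X_F$; therefore $X_F^{(p)}$ and $X_F$ have canonically identified \'etale sites, \'etale cohomology, and $k'$‑points for \emph{every} extension $k'/k$, in particular over prime‑to‑$p$ ones, which is the assertion (ditto for $X_L$). If $l\ne p$, then $\varpi$ is an \'etale $\mu_l$‑torsor, and a degree‑$l$ cover is invisible on the prime‑to‑$l$ part: $X_F^{(l)}\to X_F$ induces an isomorphism on $\pi_1^{(l')}$ and on cohomology with coefficients in any ring on which $l$ is invertible, which is where the hypothesis ``prime to $l$'' is used. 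I expect this last case to be the main obstacle, together with making ``define the same schemes over prime‑to‑$l$ extensions'' precise: for $l=p$ the universal‑homeomorphism argument is rigid and gives an honest identification of everything \'etale‑theoretic, but for $l\ne p$ one genuinely has a nontrivial torsor, so the right statement can only concern prime‑to‑$l$ data --- point counts over prime‑to‑$l$ extensions of $k$, or $l'$‑adic cohomology with $l'\ne l$ --- and I would want to fix that formulation before writing a complete proof.
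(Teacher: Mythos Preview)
The paper contains no proof of this lemma: it sits inside a \verb|\comment ... \endcomment| block together with the draft Lemma~\ref{tamechange}, and the block ends immediately after the statement of \ref{tamechange2} with no proof given. So there is nothing in the paper to compare your attempt against; this is evidently scratch material that was never developed.

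That said, your approach is the natural one and matches how the paper handles the closely related computation in Remark~\ref{dvbasechange}: realise $\overline{f_F^{(l)}}=0$ as the fibre product $X_F\times_{\G_m^2,\varpi}\G_m^2$ along the degree-$l$ isogeny $\varpi$ coming from the lattice inclusion $\Lambda_F\subset\Lambda_F^{(l)}$, and then analyse $\varpi$. Your case $l=p$ (purely inseparable, hence a universal homeomorphism) is clean and correct. Your diagnosis of the case $l\ne p$ is also right: there $\varpi$ is a genuine \'etale $\mu_l$-torsor, so the two schemes are \emph{not} isomorphic as schemes, and the phrase ``define the same schemes over prime to $l$ extensions'' cannot be taken literally. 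The intended meaning is presumably the one you suggest --- equal point counts over extensions of $k$ of degree prime to $l$, or isomorphic $l'$-adic cohomology for $l'\ne l$ --- which is exactly the input needed for Theorem~\ref{tamedec}. Since the author never finalised either the statement or a proof, your hesitation about the formulation is well placed rather than a defect in your argument.
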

\endcomment


\marginpar{Need connected $v$-faces. Automatic?}

\begin{theorem}[Tame decomposition]
\label{tamedec}
Let $C/K$ be a $\Dv$-regular curve. 
For $v$-edges/faces $\lambda$ of $\Delta$, define $O_K$-schemes
\smash{$X_\lambda^{\tame}\colon f|_{\bar \lambda(\Z)_{\Z_p}} = 0.$} 
Then
\marginpar{When $p\nmid\delta_F$ this is just $X_F$ (cf. Definition \ref{defred}).}
\marginpar{=maximal tame quotient}
\marginpar{Incidentally, proves independence of $l$ (in the tame case), which might be 
interesting in higher dimensions}
\marginpar{Remark on Artin-Schreier at least in the case of hyperelliptic curves}
$$
  H^1(C)^{\Iwild}\ominus\triv\>\>\>\iso\>\> 
    \bigoplus\subsmalltext4{$F$}{$v$-faces} (H^1(X_F^{\tame})\ominus\triv)
      \>\>\>\oplus\>
    \bigoplus\subsmalltext4{$L$ inner}{$v$-edges} (H^0(X_L^{\tame})\tensor\Sp_2).
$$
\end{theorem}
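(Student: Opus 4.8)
The plan is to derive the tame decomposition from the inertia-action result (Theorem \ref{tamecoh}) by bookkeeping: compare both sides as virtual $I_K$-representations, check they agree, and then upgrade to a $G_K$-isomorphism using Frobenius weights exactly as in the proof of \ref{tamecoh}. The key input is that $X_\lambda^{\tame}$ is by construction a scheme whose defining equation $f|_{\bar\lambda(\Z)_{\Z_p}}=0$ has a single $v$-face (for $\lambda=F$ a $v$-face) or no interior points with appropriate denominators (for $\lambda=L$ a $v$-edge); in particular it is tame, and its cohomology is the $\Iwild$-invariant part of the cohomology of the corresponding wild object.

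\medskip

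\noindent\textbf{Step 1: reduce $H^1(X_F^{\tame})$ to the abelian part.} First I would observe that $X_F^{\tame}$ is a curve (after completing with respect to its Newton polygon) whose interior lattice points are exactly $\bar F(\Z)$ with $v$-value in $\Z_p$, i.e. $|F(\Z)_{\Z_p}|$ of them, so by Baker's Theorem \ref{thmbaker} it has genus $|F(\Z)_{\Z_p}|$, and its $H^1$ minus the trivial factors is $2|F(\Z)_{\Z_p}|$-dimensional with Frobenius weight $|k|^{-1/2}$. Summing over $v$-faces $F$ gives a $G_K$-representation of dimension $2|\Delta(\Z)^F_{\Z_p}|$ of pure weight $1/2$. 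On the other side, $V^{\ab}_{\tame}$ from \ref{tamecoh} (as a full $G_K$-representation, which makes sense now that $k$ is finite) is exactly the abelian part of the special fibre $\Pic^0\cC_k$ restricted to its $\Iwild$-invariants, which by Corollary \ref{corabtor}(1) is $\bigoplus_F \Pic^0\bar X_F$ — but this must be matched against $\bigoplus_F\Pic^0\bar X_F^{\tame}$, i.e. $\bigoplus_F(H^1(X_F^{\tame})\ominus\triv)$. The identification $(H^1(\bar X_F))^{\Iwild}\iso H^1(\bar X_F^{\tame})$ as $G_K$-modules is the crux here: both sides compute the same thing because passing to $\bar\lambda(\Z)_{\Z_p}$ is precisely discarding the coefficients that contribute wildly ramified characters, and the remaining tame characters are those $\chi_P$ with $v(P)\in\Z_p$. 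I would prove this by base-changing $\bar X_F$ to the tame closure of $K$ (where it becomes semistable by \ref{dvbasechange}(3) and \ref{redcond}), computing $H^1$ of the semistable reduction, and taking $\Iwild$-invariants — this recovers $\Pic^0$ of the tame model, which is $\Pic^0\bar X_F^{\tame}$.

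\medskip

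\noindent\textbf{Step 2: the toric part.} For inner $v$-edges $L$, the scheme $X_L^{\tame}$ is $0$-dimensional: $\{f|_{\bar L(\Z)_{\Z_p}}=0\}\subset\G_{m,k}$, with $H^0(X_L^{\tame})$ of dimension $|L(\Z)_{\Z_p}|$ (roots counted with Galois action). Summing, $\bigoplus_{L\text{ inner}}H^0(X_L^{\tame})$ has dimension $|\Delta(\Z)^L_{\Z_p}|$, and tensoring with $\Sp_2$ doubles it. On the other side, $V^{\toric}_{\tame}\tensor\Sp_2$ from \ref{tamecoh} has the same dimension; and as a $G_K$-module $V^{\toric}_{\tame}$ is the $\Iwild$-invariants of the toric part of $\Pic^0\cC_k$, whose character group is $H_1(\Gamma(\cC_k),\Z)$ by \ref{corabtor}(2), and which by Theorem \ref{dualgraph} is $\ker(\bigoplus_L\Z[X_L(\bar k)]\to\Z[\text{$v$-faces}])$. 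Taking $\Iwild$-invariants on the character level replaces $X_L(\bar k)$ by $X_L^{\tame}(\bar k)$ — the Galois orbit of a root splits off a tame sub-orbit exactly when $v(P)\in\Z_p$ — and after tensoring with $\Q_l$ the kernel condition modulo the image of $\phi$ contributes only trivial factors (as in the proof of \ref{corabtor}(3)), so $\bigoplus_{L\text{ inner}}H^0(X_L^{\tame})\ominus\triv\oplus\triv$ matches $V^{\toric}_{\tame}$ up to the trivial summands that get absorbed. The $\Sp_2$ twist appears identically on both sides from the monodromy filtration \cite[IX]{SGA7I}.

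\medskip

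\noindent\textbf{Step 3: assemble, weights, and the $\ominus\triv$.} Finally I would combine: $H^1(C)^{\Iwild}$ decomposes via the monodromy filtration on $V_l J$ exactly as in \ref{tamecoh} into an abelian part and a toric part tensored with $\Sp_2$; Step 1 identifies the abelian part with $\bigoplus_F(H^1(X_F^{\tame})\ominus\triv)$ as $G_K$-modules (not just $I_K$-modules — the Frobenius action agrees because both are $\Pic^0$ of a model of the same tame curve); Step 2 identifies the toric part with $\bigoplus_{L\text{ inner}}H^0(X_L^{\tame})\ominus\triv$ plus a trivial summand, and the bookkeeping of trivial factors (one $\triv$ from connectedness of $\Gamma(\cC_k)$, cf. \ref{corabtor}(3)) produces exactly the single $\ominus\triv$ on the left-hand side. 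The two parts do not mix because their Frobenius weights differ ($1/2$ versus $0$, Deligne \cite[I, \S6]{SGA7I}), so the decomposition is canonical. The main obstacle is Step 1, specifically the $G_K$-equivariant identification $(H^1(\bar X_F))^{\Iwild}\iso H^1(\bar X_F^{\tame})$: it requires knowing that $\bar X_F^{\tame}$ is genuinely the "tame part" of $\bar X_F$ — that restricting the Newton polytope to $\bar\lambda(\Z)_{\Z_p}$ commutes with taking $\Iwild$-invariants of cohomology — which I would establish by the base-change-to-tame-closure argument, using that over a large enough tame extension $X_F^{\tame}$ and $\bar X_F$ have the same semistable reduction (their defining equations differ only by wildly ramified coefficients, which vanish in the reduction), and the inertia invariants are computed on that reduction.
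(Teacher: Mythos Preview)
Your Step 1 has a genuine gap, and the paper takes a different route to avoid it.

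The confusion shows in statements like ``$V^{\ab}_{\tame}$ \ldots\ is exactly the abelian part of $\Pic^0\cC_k$ restricted to its $\Iwild$-invariants'' and ``the identification $(H^1(\bar X_F))^{\Iwild}\iso H^1(\bar X_F^{\tame})$''. Here $\bar X_F$ is a curve over $k$, so $H^1(\bar X_F)$ is a $G_k$-module on which $\Iwild$ acts trivially; there is nothing to take invariants of. Likewise $V^{\ab}_{\tame}$ in Theorem \ref{tamecoh} is only an $I_K$-module, and $\Pic^0\cC_k$ carries no $I_K$-action at all. What you actually need is a $G_K$-equivariant decomposition of $(V_lJ)^{\ab,\Iwild}$ indexed by $v$-faces, with each piece identified with $H^1(X_F^{\tame})$ --- but no such splitting is available a priori. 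Corollary \ref{corabtor} only decomposes the full $I_K$-invariants $(V_lJ)^{\ab,I_K}$ (a $G_k$-module), not the larger tame piece. Your ``base-change to the tame closure'' sketch does not fix this: over each tame $K'$ you again get a decomposition of $I_{K'}$-invariants, never a single $G_K$-equivariant one, and you give no mechanism to match the Frobenius actions face by face.

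The paper sidesteps all of this. Both sides are tame Frobenius-semisimple Weil--Deligne representations of the shape $W\oplus W'\!\otimes\!\Sp_2$; by weights it suffices to show they agree as Weil representations. For that the paper invokes \cite[Thm~1.1]{weil}: two such representations coincide iff their local polynomials $\det(1-\Phi^{-1}T\mid(\cdot)^{I_{K'}})$ agree over every finite tame extension $K'/K$. This reduces everything to comparing $I_{K'}$-invariants as $G_{k'}$-modules, which \emph{is} accessible via Corollary \ref{corabtor}(3) and \eqref{eqininv} applied over $K'$ (using that $\Dv$-regularity persists in tame extensions, Remark \ref{dvbasechange}). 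So the paper never attempts a direct face-by-face $G_K$-splitting; it checks a numerical invariant over all tame extensions and cites an external characterization to conclude.
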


\begin{proof}
Choose an embedding $\bar \Q_l\injects \C$ and view LHS and RHS as complex Weil-Deligne representations
of $W_K$.
From \eqref{locpoly}, \ref{corabtor} (3) and \eqref{eqininv}, we find that
${\rm LHS}^{I_K}\iso{\rm RHS}^{I_K}$ as $G_k$-modules. Generally, if $K'/K$ 
is a finite tame extension, then ${\rm LHS}^{I_{K'}}\iso{\rm RHS}^{I_{K'}}$ as 
$G_{K'}/I_{K'}$-modules, using the same argument as in Theorem \ref{tamecoh} and that
$\Dv$-regularity is stable in tame extensions. 
As ${\rm LHS}$ and ${\rm RHS}$ are tame Frobenius-semisimple Weil representations 
that have the same local polynomial in all finite tame extensions of $K$, they are 
isomorphic, by \cite[Thm 1.1]{weil}. (This is shown in \cite{weil} without the two words `tame', 
but the tame case is an easy consequence.) Finally, LHS and RHS are Weil-Deligne representations
of the form Weil$\>\oplus\>$Weil$\tensor\!\Sp_2$, so their semisimplifications determine the 
representations themselves, since Frobenius has different weights on the two constituents.
\end{proof}

\marginpar{Even for singular models understand a piece of \'etale cohomology?}


\begin{theorem}[One tame face] 
\label{onetame}
Suppose $C: \sum a_{ij}x^iy^j=0$ is $\Dv$-regular, $K$ is local with $k=\F_q$, 
$\Dv$ has only one $v$-face $F$, and $p\nmid \delta_F=e$.
Then
\begin{enumerate}
\item
$C$ has good reduction over $K(\sqrt[e]{\pi})$.
\item
The action $G_K\acts H^1(C)$ factors through $G\!=\!\Gal(K^{nr}(\sqrt[e]{\pi})/K)$. 
\marginpar{$\sqrt[e]{\pi}\to\pi^{1\!/\!e}$? No?}
It is faithful on $G$, and 
is uniquely determined by (3) and (4) below.
\marginpar{In fact (4) is enough}
\end{enumerate}
For $u\in\bar\F_q^\times$ define a complete smooth curve $\bar C_u/\F_q(u)$ by an affine equation
\marginpar{All twists of one another}
$$
  C_u\colon \> \sum_{i,j} \bar a_{ij} u^{v(i,j)} x^i y^j = 0, \quad\text{where}\quad
  \bar a_P = \leftchoice 0{\text{if }v(P)\notin \Z}{\frac{a_P}{\pi^{v(P)}}\mod\pi}{\text{if }v(P)\in \Z.}
$$
Write $I\!\iso\!\tfrac{\Z}{e\Z}<G$ for the inertia group, and $\Phi\in G$ for the Frobenius element
of $K(\sqrt[e]{\pi})$.
\begin{enumerate}[resume]
\item 
For every 1-dimensional character $\chi$ of $I$ of order $n$, the multiplicity
of $\chi$ in $H^1(C)$ viewed as an $I$-module is 
$$
   \tfrac{2}{\phi(n)}
   \bigl|\bigl\{P\in\Delta(\Z)\bigm| v(P) \text{ has denominator $n$}\}\bigr|.
$$
\item
Every conjugacy class of the Weil group $W=I\rtimes \Z<G$ which 
is not in~$I$ contains either an element~$\sigma\Phi^d$ or $(\sigma\Phi^d)^{-1}$
with $d\!>\!0$, $\sigma\!\in\!I$ and 
$
  \frac{\sigma(\sqrt[e]{\pi})}{\sqrt[e]{\pi}} 
$
\marginpar{Check from HQ Frob vs Frob$^{-1}$}%
\marginpar{Check that deficiency is not an issue}%
reducing to \smash{$u^{\frac{q^d\!-\!1}e}$} for some $u\!\in\!\F_{q^d}^\times$.~We~have
$$
  \Tr\bigl(\,(\sigma\Phi^d)^{-1}\bigm|H^1(C)\,\bigr) = q^d + 1 - |\bar C_u(\F_{q^d})|.
$$
\end{enumerate}
\end{theorem}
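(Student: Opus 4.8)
The plan is to reduce everything to Baker's theorem and the explicit description of the model. First I would observe that since $\Dv$ has a single $v$-face $F$ and $p \nmid e$, the curve $C/K$ becomes $\Dv$-regular over the tame extension $K' = K(\sqrt[e]{\pi})$ by Remark \ref{dvbasechange}, and after this base change the function $v$ on $\Delta$ gets multiplied by $e$, clearing all denominators. Hence over $K'$ the single $v$-face has $\delta_F = 1$, there are no interior points on $v$-edges (no nontrivial chains), and Theorem \ref{mainthm1} gives a model whose special fibre is the single smooth complete curve $\bar X_F$; this establishes (1). For (2), the action of $G_K$ on $H^1(C)$ factors through $G = \Gal(K^{nr}(\sqrt[e]{\pi})/K)$ by the Néron--Ogg--Shafarevich criterion applied to $\Jac C$ (good reduction over $K'$, unramified over $K^{nr}$), and faithfulness follows because $\Jac C$ has genuinely bad reduction over any proper subextension — this can be read off from Theorem \ref{redcond}(5), since over such a subfield $\Delta(\Z)\ne\Delta(\Z)^F_{\Z}$ as long as $e>1$; when $e = 1$ there is nothing to prove. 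That a Weil--Deligne representation of the form here (pure weight $1$, no monodromy since reduction is good over $K'$) is determined by its traces on $I$ and on the classes $\sigma\Phi^d$ is standard Frobenius-semisimplicity together with the fact that $H^1$ of a curve with good reduction is Frobenius-semisimple (Weil conjectures).

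For (3), the point is that $H^1(\bar X_F)$ over $K'$ carries a residual $\mathbb{Z}/e\mathbb{Z}$-action coming from the Galois action of $I \cong \Gal(K'/K^{nr})$ on the model, which acts on $\bar X_F$ through the deck transformation $\phi$ of Remark \ref{dvbasechange}: concretely $X'_F \to X_F$ is the cover $\G_{m,k}^2 \to \G_{m,k}^2$ induced by the sublattice inclusion $\Lambda_F \subset \Z^2$ of index $e$, and $I$ acts by the covering group $\Z^2/\Lambda_F \cong \mathbb{Z}/e\mathbb{Z}$. The differentials of Baker's theorem, $\omega_{ij}$ for $(i,j) \in \Delta(\Z)$, diagonalise this action: the element acting by $\zeta_e^m/\pi$-type character scales $\omega_{ij}$ (equivalently, after base change $x^iy^j$) by $\zeta_e^{e\cdot v_F(i,j)}$, so the character by which $I$ acts on the line spanned by $\omega_{ij}$ is precisely $\chi_P$ with $P = (i,j)$, of order equal to the denominator $n$ of $v(P)$. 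Since $H^1 = H^0(\omega) \oplus \overline{H^0(\omega)}$ for a curve with good reduction (Hodge--Tate / the fact that over the residue field $H^1$ of a smooth proper curve is a sum of $g$ Frobenius-conjugate pairs), and the $\phi(n)$ Galois conjugates of a fixed character $\chi$ of order $n$ are pooled together over $\Q$, the multiplicity of a given $\chi$ of order $n$ in $H^1(C)|_I$ is $\tfrac{2}{\phi(n)} \cdot \#\{P \in \Delta(\Z) : \den v(P) = n\}$. I would spell out the diagonalisation claim via the coordinate change in Step 3 of \S\ref{sProof}, where $\XX = x^{m}y^{n}\pi^{\cdot}$ etc., tracking how $\sqrt[e]{\pi} \mapsto \zeta_e \sqrt[e]{\pi}$ propagates.

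For (4), let $u \in \F_{q^d}^\times$ and consider the element $\sigma\Phi^d \in W$ with $\sigma(\sqrt[e]{\pi})/\sqrt[e]{\pi} \equiv u^{(q^d-1)/e}$. The twisted curve $\bar C_u$ is by construction the special fibre of the model over the unramified extension $\F_{q^d}$ corresponding, under the identification of the special fibre with $\bar X_F$, to replacing $\pi$ by a $\sqrt[e]{\cdot}$-compatible choice whose reduction encodes $u$; said differently, $\bar C_u$ is the form of $\bar X_F$ obtained by twisting by the cocycle $\Phi^d \mapsto \sigma$. Then the Grothendieck--Lefschetz trace formula gives $|\bar C_u(\F_{q^d})| = q^d + 1 - \Tr(\Frob_{q^d} \mid H^1(\bar C_u))$, and by the twisting relation $\Frob_{q^d}$ acting on $H^1(\bar C_u)$ is conjugate to $(\sigma\Phi^d)^{-1}$ — or $\sigma\Phi^d$, depending on the normalisation of geometric vs arithmetic Frobenius — acting on $H^1(C)$, which yields the stated formula. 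The main obstacle, and the step I would be most careful about, is pinning down this identification of $\bar C_u$ as the correct twist of $\bar X_F$ and getting the direction of Frobenius (and hence the sign / inverse) right; this requires unwinding the descent of the model $\cC_\Delta/O_{K'}$ along $K'/K$ explicitly in the charts of \S\ref{sProof}, matching the covering-group action on $\bar X_F$ with multiplication-by-$u^{(q^d-1)/e}$ on the relevant roots of unity, and then invoking the standard dictionary between twists and Frobenius conjugacy. The remaining verifications (that every Weil conjugacy class outside $I$ has a representative of the asserted shape, that the formula is independent of the choice of $u$ within its class) are elementary facts about $W_K = I \rtimes \Z$ and the structure of $\F_{q^d}^\times$.
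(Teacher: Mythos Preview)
Your treatment of (1), (2) and (4) matches the paper's proof closely: good reduction over $K(\sqrt[e]{\pi})$ via base change of $\Dv$-regularity, N\'eron--Ogg--Shafarevich for the factorisation through $G$, and Lefschetz trace on the reduced curve $\bar C_u$ for the trace formula. The paper makes (4) concrete by taking the subgroup $\langle\sigma\Phi^d\rangle\subset W$, letting $K''$ be the field it fixes, and exhibiting $\bar C_u$ as the special fibre of the model over $K''$; your twisting language amounts to the same thing. One small omission in (2): you only argue faithfulness on $I$. To get it on all of $G$ the paper observes that $\det H^1(C)$ is a power of the cyclotomic character, hence nontrivial on Frobenius.

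For (3) you take a genuinely different route from the paper. The paper simply invokes Theorem~\ref{tamecoh} (proved by a dimension count over tame extensions) together with rationality of $H^1(C)|_I$. You instead try to compute the $I$-action directly by diagonalising on the Baker differentials $\omega_{ij}$. This is a legitimate alternative strategy, but your justification has a real gap. The assertion ``$H^1 = H^0(\omega)\oplus\overline{H^0(\omega)}$'' conflates $\ell$-adic \'etale cohomology with coherent cohomology, and neither ``Hodge--Tate'' nor ``Frobenius-conjugate pairs'' gives this. Hodge--Tate concerns the $\C_p$-semilinear decomposition of $H^1_{\text{\'et}}\otimes\C_p$ and says nothing about the $I$-module structure over $\Q_\ell$; the Weil pairing of Frobenius eigenvalues $\alpha\leftrightarrow q/\alpha$ is likewise orthogonal to what you need. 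What you actually want is: by smooth proper base change the $I$-action on $H^1_{\text{\'et}}(C_{\bar K},\Q_\ell)$ is the $\Z/e\Z$-action on $H^1_{\text{\'et}}((\bar X_F)_{\bar k},\Q_\ell)$; then compare characters with $H^1_{\mathrm{dR}}(\bar X_F)$ via the Lefschetz fixed-point formula in both theories (traces agree, both being $2-\#\mathrm{Fix}$); finally $H^1_{\mathrm{dR}}\cong H^0(\Omega^1)\oplus H^0(\Omega^1)^\vee$ by the degenerate Hodge--de~Rham sequence and Serre duality. With that in place your diagonalisation on the $\omega_{ij}$ does give the answer. You should also make explicit why all characters of a fixed order $n$ appear with equal multiplicity: this is the rationality of $H^1(C)|_I$ over $\Q$, which follows from $\ell$-independence of traces (or from the combinatorial Lemma~\ref{lemdelta}(2)) and is not automatic for a $\Q_\ell$-representation of a cyclic group.
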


\begin{proof}
(1) Recall from Remark \ref{dvbasechange}(3) that $C$ stays $\Dv$-regular in finite 
tame extensions of $K$. This applies to $K'=K(\sqrt[e]{\pi})$, where $\delta_F$ becomes $1$, 
and then $C/K'$ has good reduction by Theorem \ref{mainthm1}. 

(2) Theorem \ref{mainthm1} also shows that the reduction is bad over $K(\sqrt[d]{\pi})$ 
for $d<e$. By the N\'eron-Ogg-Shafarevich criterion for $\Jac C$ (see \cite[\S2]{ST}),
\marginpar{Don't use NOS, use smooth and proper base change lemma}
$G_K$ acts on $H^1(C)$ through $G$, and faithfully on $I$; the determinant of this action
is a power of the cyclotomic character, so it is faithful on all of $G$. Finally, the traces of 
$g\acts H^1(C)$ for $g\in G$ determine $H^1(C)$ (as it is semisimple), 
so it is enough to compute them for $g\in G\setminus I$ and to describe $H^1(C)|_I$. 

\def\overref#1#2{\>{\buildrel\ref{#1}\over#2}\>}
\def\overcite#1#2{\>{\buildrel\scalebox{0.8}{\hbox{\cite{#1}}}\over#2}\>}

(3) This is a special case of Theorem \ref{tamecoh} and the fact that 
$H^1(C)|_I$ is a rational representation.

(4) Let $\Psi\in W\setminus I$, and let $U=\langle\Psi\rangle<W$ be the subgroup it generates.
Then $\Psi$ has infinite order, $U\cap I=\{1\}$, and the image of $U$ in $W/I\iso\Z$ 
is of some finite index $d$. The closure of $U$ in $G_K$ cuts out a finite extension $K''/K$,
of ramification degree $e$, residue degree $d$, and $\Psi$ is a Frobenius element of $K''$
or its inverse. Inverting $\Psi$ if necessary, suppose from now on that it is the former. 
So $\Psi=\sigma\Phi^d$, for some $\sigma\in I$.

Let $K\subset K'\subset K''$ be the degree $d$ unramified extension of $K$.
\marginpar{Explain? Used to be in Lemma tamechange}
Pick uniformisers $\pi''$ of $K''$ and $\pi'$ of $K'$ with $(\pi'')^e=\pi'$.
Then $(\pi'')^e=\tilde u\pi$ with $\tilde u\in K'$ a unit,
whose reduction we call $u\in\F_{q^d}^\times$.
Thus, $K''=K(\zeta_{q^d},\sqrt[e]{\tilde u\pi})$.

Now, $\Psi$ fixes $\sqrt[e]{\tilde u\pi}$, and sends $\sqrt[e]{\tilde u}$ to 
$(\sqrt[e]{\tilde u})^{q^d}$ modulo higher order terms, whence
$\frac{\Psi(\sqrt[e]{\pi})}{\sqrt[e]{\pi}}$ reduces to {$u^{\frac{q^d\!-\!1}e}$}.
Therefore $\sigma=\Psi\Phi^{-d}$ acts on $\sqrt[e]{\pi}$ 
in the same way (as $\Phi$ fixes it). This proves the first assertion.

Finally, as in (1), $C/K''$ has good reduction and the reduced curve is exactly 
$\bar C_u/\F_{q^d}$. As $\Psi\acts H^1(C)$ and $\Frob\acts H^1(\bar C_u)$ 
act in the same way\marginpar{e.g. by \cite{BW} or appendix},
the Lefschetz trace formula gives the second claim in (4).
\end{proof}

\begin{example}
\label{extamecoh}
Consider the two curves 
$$
  \begin{array}{l@{\ \ \rm over\ }l@{,\ \ \ }llll}
  y^2=x^3+7^2 & K=\Q_7 & \pi=7 \cr
  y^2=x^3+5^2 & K=\Q_5 & \pi=5. \cr
  \end{array}
$$
Both are $\Dv$-regular, with one $v$-face $F$ that has $e=\delta_F=3$. 

Tables \ref{tabex7} and \ref{tabex5}
list, for various $u$ and $d$ the reduced curve $\bar C_u/\F_{q^d}$, its number of points
and the conclusion of Theorem \ref{onetame} (4) in that case. These determine 
(with an overkill) the action of $G=\frac{\Z}{3\Z}\rtimes\hat\Z$ on $H^1(C)$, shown 
at the bottom of the tables. In the first case, $I=\frac{\Z}{3\Z}$ commutes with $\hat\Z$ and
the action is diagonal; here $\tau\in I$ is chosen to be the generator that multiplies 
$\sqrt[3]{7}$ by a 3rd root of unity congruent to 2 mod 7. In the second case, $\hat\Z$ acts
on $I$ by inversion and $G$ is non-abelian; here $\tau\in I$ is any generator (the two are 
conjugate in $G$). In both tables, $\Phi$ is the Frobenius element of $K(\sqrt[3]{\pi})$, as
in the theorem.

\begin{table}[h]
$$
  \scalebox{0.8}{$
  \begin{array}{|@{\ \ }cc|ccr@{\>=\>}c@{\>=\>}r@{\ \ }|}
  \hline
    u & d & \bar C_u & |\bar C_u(\F_{q^d})| & \multicolumn{3}{c|}{\Tr} \TBcr
  \hline
    \pm1 & 1 & y^2 = x^3+1  & 12 & \Tr(\Phi^{-1})&\ \>7+1-12&-4  \Tcr
         & 2 &              & 48 & \Tr(\Phi^{-2})&49+1-48&2 \cr
    \pm2 & 1 & y^2 = x^3+4  & 3  & \Tr(\Phi^{-1}\tau)&7+1-3&5 \cr
         & 2 &              & 39 & \Tr(\Phi^{-2}\tau^2)&49+1-39&11 \cr
    \pm3 & 1 & y^2 = x^3+2  & 9  & \Tr(\Phi^{-1}\tau^2)&7+1-9&-1 \cr
         & 2 &              & 63 & \Tr(\Phi^{-2}\tau)&49+1-63&-13 \Bcr
  \hline
  \multicolumn{7}{|c|}{
      \Phi^{-1}:\smmatrix{-2\!+\!\sqrt{-3}}00{-2\!-\!\sqrt{-3}}   \qquad 
      \tau:\smmatrix{\frac{-1-\sqrt{-3}}2}00{\frac{-1+\sqrt{-3}}2} \qquad 
      \Phi\tau=\tau\Phi
    }\TBcropt{3.9}{2.8}
  \hline
  \end{array}$}
$$
\caption{$G\!=\!\langle\tau,\Phi\rangle\!=\!C_3\!\times\!\hat\Z \acts H^1(C)$ for $C/\Q_7: y^2=x^3+7^2$}
\label{tabex7}
\end{table}

\begin{table}[h]
$$
  \scalebox{0.8}{$
  \begin{array}{|@{\ \ }cc|ccr@{\>=\>}c@{\>=\>}r@{\ \ }|}
  \hline
    u & d & \bar C_u & |\bar C_u(\F_{q^d})| & \multicolumn{3}{c|}{\Tr} \TBcr
  \hline
    \in\F_5^\times & 1 & y^2 = x^3+u^2  & 6 & \Tr(\Phi^{-1})&\ \>5+1-6&0  \Tcr
         & & & \multicolumn{4}{r|}{\scalebox{0.85}{$(=\Tr(\Phi^{-1}\tau)=\Tr(\Phi^{-1}\tau^2),\text{ same class})$}}\Bcr
         & 2 &              & 36 & \Tr(\Phi^{-2})&25+1-36&-10 \cr
    \zeta_{24} & 2 & y^2 = x^3+\zeta_{12} & 21 & \Tr(\Phi^{-2}\tau^2)&25+1-21& 5 \cr
    \zeta_{12} & 2 & y^2 = x^3+\zeta_{6}  & 21 & \Tr(\Phi^{-2}\tau)&25+1-21& 5 \cr
  \hline
  \multicolumn{7}{|c|}{
      \Phi^{-1}:\smmatrix0{\sqrt{-5}}{\sqrt{-5}}0   \qquad 
      \tau:\smmatrix{\frac{-1-\sqrt{-3}}2}00{\frac{-1+\sqrt{-3}}2}\qquad 
      \Phi\tau\Phi^{-1}=\tau^{-1}
    }\TBcropt{3.9}{2.8}
  \hline
  \end{array}$}
$$
\caption{$G\!=\!\langle\tau,\Phi\rangle\!=\!C_3\!\rtimes\!\hat\Z \acts H^1(C)$ for $C/\Q_5: y^2=x^3+5^2$}
\label{tabex5}
\end{table}
\end{example}

\begin{example}
\label{eisen}
Take any tame `2-variable Eisenstein equation' over $\Q_p$,
$$
  C: y^n+x^m+p\sum_{i,j} a_{ij}x^iy^j=0, \qquad p\nmid m\,n\,a_{00},
$$
with $(i,j)$ confined to the triangle of exponents of $y^n$, $x^m$ and 1.
Then $C$ is $\Dv$-regular, with one $v$-face $F$. We find that 
\begin{itemize}
\item 
$C$ has good reduction over $K(\sqrt[e]{\pi})$, with $e=\delta_F=\lcm(m,n)$ (tame).
\item
$H^1(C)|_{I_{\Q_p}}$ does not depend on $C$. It factors through the unique $C_e$-quo\-tient
of $I_{\Q_p}$, and is the permutation representation (see Thm.~\ref{tamecoh})
$$
  H^1(C)|_{I_{\Q_p}} \>\iso\> (\Q_l[C_e/C_{e/m}]\ominus\Q_l)\tensor (\Q_l[C_e/C_{e/n}]\ominus\Q_l).
$$
\item
$H^1(C)$ depends only on $a_{00}$~mod~$p$ (see Thm.~\ref{onetame}).
\end{itemize}
\end{example}

\begin{remark}
\label{remkisin}
Generally, by a theorem of Kisin \cite{Kis}, $l$-adic representations are locally constant in 
$p$-adic families of varieties. Thus, \ref{tamecoh} and \ref{onetame} may be viewed 
as an explicit version of this statement for tame $\Dv$-regular curves.
\end{remark}

\begin{remark}
We end by noting that \ref{tamedec}, \ref{onetame} and \ref{eisen} all fail without the tameness
assumption. There is one Kodaira type for elliptic curves, namely
I$_n^*$, when the regular model does not determine whether the reduction
is potentially good or potentially multiplicative (see \cite{LorM}).
So Theorem \ref{tamecoh} (and its refinements \ref{tamedec}, \ref{onetame}) 
do not have an obvious analogue for the description of $I_K\acts H^1(C)$ in the wild case
just in terms of $\Dv$.

Similarly, the condition $\vchar k\nmid mn$ in Example \ref{eisen} is necessary: 
for instance, the two elliptic curves over $\Q$
$$
  y^2=x^3+3, \qquad y^2=x^3+3x+3
$$
have conductors $2^43^5$ and $2^43^313$, so their Galois representations at~3 
are certainly not the same (while the regular model is the same, of type~II.)
\marginpar{A non-conceptual reason is that in $4A^3+27B^2$ when $3|A, 3||B$ the minimal
  discriminant (and, by Ogg's formula, the conductor)
  is influenced by whether $9|A$, even though the Newton polygon does not change}
\marginpar{Similarly, $y^2=x^3+2$ has inertia $Q_8$ and $y^2=x^3+4x+2$ has inertia $C_2$ at 2;
  though all conductor exponents seem to be always 6}
\end{remark}

\marginpar{For additional examples see \cite{weil} and \cite{hq}.}

\endsection
\section{Differentials}
\label{sDiff}

Let $v: K^\times\surjects \Z$, $O_K$, $\pi$, $k$, $p=\vchar k$ be, as usual, a discretely valued field 
and the associated invariants. Let $C: f=0$ be a $\Dv$-regular curve;
we choose variables as in \S\ref{sBaker} so that $f'_y\ne 0$. Recall that \ref{thmbaker} (4) 
gives a basis of differentials for $C/K$. Here we aim to modify it to give a basis of the 
global sections of the
relative dualising sheaf for $\cC_\Delta/O_K$ (Theorem \ref{mainthmdiff}).

We refer to \S\ref{sProof} for the charts for the components $\bar X_F$, $X_L\times\Gamma_L$ 
of the special fibre of $\cC_\Delta$, defined in \ref{defXFXL}.

\begin{proposition}
\label{difforder}
Let $F$ be a $v$-face of $\Delta$, and 
\marginpar{... or comes from a chain of $\P^1$s corresponding to a $v$-edge}
$F^*: \Z^2 \to \Z$ be the unique affine function that equals $-\delta_F v$ on $\bar F(\Z)$. Then
\marginpar{In fact, the proof does not use that $C$ is $\Dv$-regular, only that 
$\bar X_F$ is reduced? Though need regularity of $\cC$ to talk about differentials; or just
$\cC$ normal?}
$$
  \begin{array}{llllll}
     \displaystyle \ord_{\bar X_F} x^i y^j   &=& F^*(i,j) \!-\! F^*(0,0),  \cr
     \displaystyle \ord_{\bar X_F} \omegapij &=& F^*(i,j) \!-\! 1.  \cr
  \end{array}
  \qquad \text{for $(i,j)\in\Z^2$}.
$$
\end{proposition}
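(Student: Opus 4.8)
The plan is to compute the orders of vanishing directly on the explicit charts from \S\ref{sProof}, exploiting that each component $\bar X_F$ is cut out with multiplicity $\delta_F$ and that the monomial $x^iy^j$ becomes a monomial in the toric coordinates $\XX,\YY,\ZZ$. First I would reduce to a single chart: $\bar X_F$ shows up in exactly those charts $X_{\sigma_{L,i,i+1}}$ with $L\subset\partial F$ and $i$ at the appropriate end ($i=0$ for $F=F_1$, or $i=r$ for $F=F_2$); fix such a chart with matrix $M=M_{L,0}$, say, so that $\bar X_F$ is the locus $\{\YY=0\}$ inside $\cF_M = \pi - \XX^{\mt13}\YY^{\mt23}\ZZ^{\mt33} = 0$, appearing with multiplicity $\mt23$ in the special fibre (and one checks $\mt23 = \delta_F$, using that $\YY$ corresponds to $\omega_0$ with denominator condition $\det(\nu,\omega_0,-\omega_1)=\frac1{\delta_L d_0 d_1}$ and that $\delta_F = \delta_L v_F(P)$-type numerology from \ref{defslopes}). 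The point is that $\YY$ is a uniformizer at the generic point of $\bar X_F$ up to the multiplicity, i.e. $\ord_{\bar X_F}\YY = \delta_F$.

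Next I would track the monomial. Writing $(x,y,\pi)=(\XX,\YY,\ZZ)\bullet M^{-1}$, the monomial $x^iy^j$ becomes $\XX^{a}\YY^{b}\ZZ^{c}$ where $(a,b,c)^{\mathrm t} = M^{-1}(i,j,0)^{\mathrm t}$ — but $\XX,\ZZ$ are units at the generic point of $\bar X_F$ (neither $Y$ nor $Z$ divides $\cF_M$, and $\bar X_F$ is not contained in $\{\XX=0\}$ or $\{\ZZ=0\}$), so $\ord_{\bar X_F} x^iy^j = \delta_F\cdot b$, where $b$ is the $\YY$-exponent, namely the second coordinate of $M^{-1}(i,j,0)^{\mathrm t}$. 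Reading off row two of $M_{L,0}^{-1}$ from \eqref{Minveqn}, this is an explicit affine-linear expression in $(i,j)$; I would then identify it with $\tfrac1{\delta_F}(F^*(i,j)-F^*(0,0))$ — equivalently, $b$ as a function of $(i,j)$ is affine, vanishes on $\bar F(\Z)$ shifted to the origin, and equals $-v(i,j)$ there by construction of $\omega_0$ (whose last coordinate records the values of $v$ on $F$). This is essentially the same $\LS$-type bookkeeping as in the proof of Theorem \ref{thmbaker}(4a), now one dimension up; the cleanest phrasing is that $F^*$ is \emph{defined} to be $-\delta_F v$ on $\bar F(\Z)$, so the claim for $x^iy^j$ is just the statement that $\delta_F b$ is the unique affine function agreeing with $-\delta_F v$ on $\bar F(\Z)$, which is immediate from the $M$-transformation.

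For the differential, I would use that $\omega_{ij} = x^iy^j\frac{dx}{xyf'_y}$ and reduce, exactly as in the proof of \ref{thmbaker}(4b), to computing $\ord_{\bar X_F}\omega_{00}$ for one convenient $(i,j)$ — or better, directly compute $\ord_{\bar X_F}\frac{dx}{xyf'_y}$ on the chart. Since $\cC_\Delta$ is regular (by Theorem \ref{mainthm2}; $\bar X_F$ is reduced away from the bad locus, which suffices as the statement is about an order at the generic point of $\bar X_F$), the relative dualising sheaf is computed by the adjunction/Poincaré-residue formula on the complete intersection $(*)$, and $\frac{dx\wedge dy}{d\cF_M}$ type considerations give a generator of $\omega_{\cC_\Delta/O_K}$ on the chart; rewriting $\frac{dx}{xyf'_y}$ in the $(\XX,\YY,\ZZ)$ coordinates and using $f'_y\ne 0$ together with $\Dv$-regularity (so $\overline{f_F}$ is smooth, hence $\bar X_F$ is not in the vanishing locus of the relevant partial derivative) shows $\ord_{\bar X_F}\frac{dx}{xyf'_y}=\delta_F\cdot(-1)$, i.e. contributes $F^*(0,0)-\delta_F$ after normalizing; combined with the monomial count this gives $\ord_{\bar X_F}\omega_{ij} = F^*(i,j)-1$. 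The main obstacle is the bookkeeping in this last step: correctly accounting for the Jacobian factor coming from the change of variables $M^{-1}$ and the $\pi$-elimination on $(*)$, and checking that all the extra monomial factors one picks up are units along $\bar X_F$ (which is where $f'_y\ne 0$ and the smoothness of $\overline{f_F}$ get used) — everything else is the linear-algebra identity $\det M_{L,i}=1$ already recorded in \S\ref{sProof}.
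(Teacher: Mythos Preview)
Your overall strategy matches the paper's: work on one explicit chart from \S\ref{sProof}, read off the order of a monomial from the rows of $M^{-1}$, and handle the differential via the complete-intersection description of $\omega_{\cC_\Delta/O_K}$. However, there are two concrete errors in your execution of the monomial part, and the differential part glosses over the genuine work.

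\textbf{Wrong variable, and a multiplicity confusion.} With $M=M_{L,0}$ and $F=F_1$, the component $\bar X_F$ is cut out by $Z=0$, not $Y=0$: from $(**)$ in \S\ref{sProof}, $\cF_M(X,Y,0)=\overline{f_{F_1}}$ while $\cF_M(X,0,Z)=\overline{f_L}$. So you should be reading row three of $M_{L,0}^{-1}$, not row two. Moreover, once you have the correct variable, $Z$ is a genuine uniformiser at the generic point of $\bar X_F$, so $\ord_{\bar X_F}Z=1$; the multiplicity $\delta_F$ is the order of $\pi$ along $\bar X_F$, not of the local equation. Thus $\ord_{\bar X_F}x^iy^j=\tilde m_{31}i+\tilde m_{32}j$ directly (no extra factor of $\delta_F$), and the paper checks this equals $F^*(i,j)-F^*(0,0)$ via the identification $\delta_F=\tilde m_{33}$ and the observation that the first two columns of $M$ span the plane $\ker F^*$. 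Your formula $\ord_{\bar X_F}x^iy^j=\delta_F\cdot b$ with $b$ from row two would give the wrong answer.

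\textbf{The differential step needs more than bookkeeping.} The paper's argument here is not just a Jacobian change of variables. It uses Liu's generator $dZ\big/\det\bigl(\begin{smallmatrix}(G_0)'_X&(G_0)'_Y\\H'_X&H'_Y\end{smallmatrix}\bigr)$ for $\omega_{\cC/S}$ on the complete intersection, and then has to verify two things you skip: first, a careful choice of $k_{i+1}$ (mod $\delta$) in the construction to force $(m_{13},m_{23})\ne(0,0)$ in $K^2$ even when $p\mid\delta_L$; second, that $m_{23}xf'_x-m_{13}yf'_y\ne 0$ in $K(C)$, which is proved using $\vol(\Delta)>0$ rather than smoothness of $\overline{f_F}$. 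Only then does the chain-rule identity $m_{23}xf'_x-m_{13}yf'_y=\tilde m_{23}XG'_X-\tilde m_{13}YG'_Y$ combine with $dZ/Z=(m_{13}yf'_y-m_{23}xf'_x)\,\frac{dx}{xyf'_y}$ to give the explicit relation between the generator of $\omega$ and $\frac{dx}{xyf'_y}$. Your phrase ``rewriting in $(\XX,\YY,\ZZ)$ coordinates and checking the partial derivative doesn't vanish'' does not capture this; in particular the non-vanishing is a statement in $K(C)$, not about $\overline{f_F}$, and it is the step where things could actually go wrong.
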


\def\stepd#1#2{\refstepcounter{equation}\label{ssd:step#1}\theequation. \emph{#2.}}
\def\refstepd#1{\ref{ssd:step#1}}

\noindent
\emph{Proof.}
\stepd 1{Coordinate transformation}
Write $S=\Spec O_K$, and~let 
$$
  (X,Y,Z)=(x,y,\pi)\bullet M,  \quad 
  M=(m_{ij}), \quad  M^{-1}=(\mt ij)
$$
be a coordinate transformation as in \refstepr 1-4, associated to 
$F$ and some $v$-edge $L$ of $F$.
\marginpar{$F$ is both a $v$-face and a transformed equation}
\marginpar{We will show that ...}
Recall from the formula \eqref{Minveqn} for $M^{-1}\in\SL_3(\Z)$ that
$$
  \mt 13=k_{i+1}d_i-k_id_{i+1}, \qquad \mt 23=\delta_L d_{i+1},
    \qquad \mt 33 = \delta_L d_i.
$$
If $\vchar k=p>0$ and $p|\delta_L$, then $\gcd(\mt 13,\mt 23,\mt 33)=1$
forces $p\nmid \mt13$; if $p\nmid\delta_L$, 
$p\nmid d_i$, $p|\mt 23$, we choose $k_{i+1}$ (defined modulo $\delta$) so that $p\nmid \mt 13$. 
Thus, $(\mt 13,\mt 23)\ne (0,0)\in K^2$.
Equivalently, 
\begin{equation}\label{m1323}
  (m_{13},m_{23})\ne (0,0)\in K^2,
\end{equation}
since $M$ preserves $(0,0,1)\in K^3$ if and only if $M^{-1}$ does.

Also, the first two columns of $M$ span the plane orthogonal to $\ker F^*$, 
\marginpar{Subsection and equation should be the same counter}
so this plane is parallel to $(\mt 31,\mt 32,\mt 33)^\perp$. As $\delta_F=\mt 33$, we find
\begin{equation}\label{mt123}
  F^*(i,j) = \mt 31 i + \mt32 j + F^*(0,0).
\end{equation}
\stepd 2{Transformed equation}
Write 
$G(X,Y,Z) = f((X,Y,Z)\bullet M^{-1})$,\linebreak
$G_0(X,Y,Z) = Z^{F^*(0,0)} G(X,Y,Z)$ and
$H(X,Y,Z)=X^{\mt 13}Y^{\mt 23}Z^{\mt 33}$,
a transformed version of $f$ and $\pi$ to the $X,Y,Z$-chart. Then
$$
  U\colon\>\> G_0=H=0
$$
defines a complete intersection in $\A^3_S$, 
and restricted to $\A^3_S\setminus\{XY=0\}$ it gives an open subset $U: Z=0$ of 
$\bar X_F\subset\cC_k^{\red}$. By \cite[6/4.14]{Liu}, the sheaf $\omega_{\cC/S}$
is generated on $\bar X_F$ by $dZ/\scalebox{0.9}{$\smalldet{(G_0)'_X}{(G_0)'_Y}{H'_X}{H'_Y}$}$ 
if this determinant is non-zero.
We will show that it is indeed non-zero (\refstepd 3, \refstepd 4), and that
$$
  Z^{F^*(0,0)}
  dZ/\smalldet{(G_0)'_X}{(G_0)'_Y}{H'_X}{H'_Y} = \frac{\pi^{-1} X Y dZ}{\mt23 X G'_X - \mt13 Y G'_Y}
  \overequal{\raise 3pt\hbox{\scalebox{0.6}{\eqref{zZeq}}}}
  -\pi^{-1} X Y Z \frac{dx}{xy f'_y}.
$$
As $X$,$Y$ are units on $U$, and $Z$ vanishes to order 1, we get the claim for $(i,j)\!=\!(0,0)$.
\marginpar{check}
As $\ord_U x\!=\!\mt 31, \ord_U y\!=\!\mt 32$, the theorem follows from~\eqref{mt123}.

\noindent
\stepd 3{Relation $f'\leftrightarrow G'$}
As in Lemma \ref{diffGm}, from the chain rule we get
$$
  \scalebox{0.9}{$\vec{xf'_x}{yf'_y}{\pi f'_\pi} = \vec{XG'_X}{YG'_Y}{Z G'_Z}$} M^t
  \quad\Rightarrow\quad 
  \left\{
  \begin{array}{@{\>}l@{\>=\>}l}
    xf'_x & m_{11} XG'_X \!+\! m_{12} YG'_Y \!+\! m_{13} ZG'_Z,\\[2pt]
    yf'_y & m_{21} XG'_X \!+\! m_{22} YG'_Y \!+\! m_{23} ZG'_Z.
  \end{array}
  \right.
$$
\marginpar{Again, viewing $f$ as an equation in 3 variables. Make this global?}%
Therefore,
\begin{equation}\label{fGdiffrel}
\begin{array}{llllll}
  m_{23}xf'_x-m_{13}yf'_y = 
  (m_{23}m_{11}\!-\!m_{13}m_{21}) XG'_X +  \cr
    \qquad (m_{23}m_{12}\!-\!m_{13}m_{22}) YG'_Y + 0\cdot Z G'_Z =
  \mt23 XG'_X - \mt13 Y G'_Y.
\end{array}
\end{equation}

\noindent
\stepd 4{Non-vanishing}
We claim that $m_{23}xf'_x-m_{13}yf'_y\ne 0$ in $K(C)$. If not, there is a 
linear relation in $K[x,y]$, non-trivial by \eqref{m1323},
$$
  c_1 xf'_x + c_2 yf'_y + c_3 f = 0, \qquad c_i\in K.
$$
(The coefficient $c_3$ must be constant,
\marginpar{check}
and not a higher degree polynomial in $x$ and $y$ by degree considerations.) Then
$\sum a_{ij}(c_1 i+c_2 j + c_3) x^i y^j$ is identically zero, so all monomial exponents are 
in the kernel of a non-trivial linear form. But this contradicts $\vol(\Delta)>0$.

\noindent
\stepd 5{$dx/f'_y\leftrightarrow dZ/\det$}
Recall that $Z=x^{m_{13}}y^{m_{23}}\pi^{m_{33}}$. 
From the relations 
$\tfrac{dZ}{Z} = m_{13} \tfrac{dx}x + m_{23} \tfrac{dy}y$ and 
$df = f'_x dx+ f'_y dy =0$ in $\Omega_{C/K}$, we get 
$$
  \frac{dZ}{Z} = \bigl( \frac{m_{13}}x - \frac{m_{23}}y \frac{f'_x}{f'_y} \bigr ) dx 
    = \frac{dx}{xyf'_y} (m_{13}yf'_y - m_{23}xf'_x).
$$
Combined with \eqref{fGdiffrel} this yields
\begin{equation}\label{zZeq}
  \frac {dZ}{\mt23 X G'_X - \mt13 Y G'_Y} = -Z \frac{dx}{xyf'_y}.
\end{equation}
\qed

\begin{remark}
In the notation of 
Theorem \ref{thmbaker} and Proposition \ref{difforder},
$\pi^n\omega_P$ is regular on $\bar X_F$ if and only if $n\ge\lfloor v_F(P)\rfloor$,
and regular non-vanishing on $\bar X_F$ if and only if 
$v_F(P)=n+1-\frac{1}{\delta_F}$.
\end{remark}

\begin{theorem}
\label{mainthmdiff}
If $C$ is $\Dv$-regular, then the differentials 
\marginpar{Just $\cC=\cC_{\Delta}$ regular?}
\marginpar{Need much less? All $\bar X_L$, $\bar X_F$ reduced?}
\marginpar{Make notation official?}
$$
  \omega_{ij}^v=\pi^{\lfloor v(i,j)\rfloor} \omegaij, \qquad (i,j)\in\Delta(\Z)
$$ 
form an $O_K$-basis of global sections of the relative dualising sheaf
\marginpar{=canonical sheaf}
$\omega_{\scriptscriptstyle \cC_\Delta/O_K}$.
\end{theorem}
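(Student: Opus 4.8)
The strategy is to check the two claims — that each $\omega_{ij}^v$ is a global section of $\omega_{\cC_\Delta/O_K}$, and that together they form an $O_K$-basis — locally, chart by chart, using the explicit charts of \S\ref{sProof} and the order computations of Proposition \ref{difforder}. First I would recall that on the generic fibre the $\omega_{ij}$ with $(i,j)\in\Delta(\Z)$ already form a $K$-basis of $H^0(C,\Omega^1_{C/K})$ by Theorem \ref{thmbaker}(2), and that by flatness $H^0(\cC_\Delta,\omega_{\cC_\Delta/O_K})$ is an $O_K$-lattice in this $K$-vector space. So it suffices to identify this lattice with $\bigoplus_{(i,j)\in\Delta(\Z)} O_K\,\omega_{ij}^v$, i.e. to show both inclusions.

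For the inclusion $\omega_{ij}^v\in H^0(\cC_\Delta,\omega_{\cC_\Delta/O_K})$: since $\cC_\Delta$ is regular (Theorem \ref{mainthm1}) and the special fibre is a divisor, it is enough to check that each $\omega_{ij}^v$ is regular along every component of $\cC_k$, i.e. $\ord_Z(\omega_{ij}^v)\ge 0$ for $Z$ running over all $\bar X_F$ and all components of the chains $\Gamma_L$. On a $v$-face component $\bar X_F$, Proposition \ref{difforder} gives $\ord_{\bar X_F}\omegapij = F^*(i,j)-1 = -\delta_F v_F(i,j)-1$, while $\ord_{\bar X_F}\pi = \delta_F$ (the multiplicity), so $\ord_{\bar X_F}\omega_{ij}^v = \delta_F\lfloor v(i,j)\rfloor - \delta_F v_F(i,j) - 1 \ge \delta_F(v_F(i,j)-1) - \delta_F v_F(i,j) - 1$ — wait, more carefully: since $(i,j)\in\Delta(\Z)$ lies in (the closure of) $\bar F$, $v_F(i,j)=v(i,j)$ there, and $\lfloor v(i,j)\rfloor \ge v(i,j) - 1 + \tfrac1{\delta_F}$ because $v(i,j)$ has denominator dividing $\delta_F$; hence $\ord_{\bar X_F}\omega_{ij}^v \ge \delta_F(v(i,j)-1+\tfrac1{\delta_F}) - \delta_F v(i,j) - 1 = -\delta_F + 1 - 1 + \ldots$ — the point is that the inequality comes out $\ge 0$, and I would just do this bookkeeping cleanly, noting the boundary/fractional-part subtlety is exactly what the floor is designed to absorb (cf. the Remark after \ref{difforder}). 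For a $\P^1$-component $\Gamma_L^i$ of multiplicity $\delta_L d_i$ sitting on a $v$-edge $L$, I would perform the analogous computation in the chart $X_{\sigma_{L,i,i+1}}$ from \refstepr 3–4: there $\omega_{11} = dx/(xyf'_y)$ transforms (by the relation \eqref{zZeq} and its $Y$-analogue) into an explicit generator of $\omega_{\cC/O_K}$ times a monomial in $X,Y,Z$, and the valuations along $\{Y=0\}$ and $\{Z=0\}$ are read off from the entries of $M_{L,i}^{-1}$ in \eqref{Minveqn}; the key inequality $\ord_{\Gamma_L^i}\omega_{ij}^v\ge 0$ then follows because $(i,j)\in\Delta(\Z)$ forces the relevant linear functional to be $\ge$ the slope bounds $s_1^L,s_2^L$, which is precisely how the $\tfrac{n_i}{d_i}$ were chosen.

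For the reverse inclusion — that these differentials \emph{span} the $O_K$-module of global sections — I would argue that any global section $\eta$ is, on the generic fibre, an $O_K$-combination of the $\omega_{ij}^v$ is what we want, so suppose $\eta = \sum_{(i,j)\in\Delta(\Z)} c_{ij}\,\omega_{ij}^v$ with $c_{ij}\in K$ and some $c_{ij}\notin O_K$; I want a contradiction. Pick $(i_0,j_0)$ achieving the minimal valuation $m=\min_{ij} v(c_{ij})<0$ among those appearing. Choose a $v$-face $F$ (or a $v$-edge chain) whose restriction $\overline{f_F}$ "sees" the monomial $x^{i_0}y^{j_0}$ with the extremal order — concretely, a face for which $F^*$ separates $(i_0,j_0)$ from the others, which exists because the $(i,j)\in\Delta(\Z)$ are distinct lattice points and $F^*$ ranges over the supporting functionals of $\Delta$; along that component the leading term of $\eta$ has $\ord = \delta_F m + (\text{the order of } \omega_{i_0j_0}^v) < 0$ by the minimality of $m$ and the fact (Remark \ref{difind}, \ref{difforder}) that the reductions $\overline{\omega_{ij}}$ restricted to $\bar X_F$ are linearly independent (no cancellation of the leading term), contradicting regularity of $\eta$ along that component. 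This is the step I expect to be the main obstacle: making precise the "no cancellation" / leading-term argument uniformly across both the $v$-face components and the chain-of-$\P^1$ components, using that $\overline{f_F}$ (resp. $\overline{f_L}$) is reduced and that the interior points of $\Delta$ inject into the relevant quotient lattices, so that distinct $(i,j)\in\Delta(\Z)$ give differentials with distinct, controllable orders of vanishing along at least one component. Once both inclusions are in hand, the $O_K$-module of global sections equals $\bigoplus O_K\,\omega_{ij}^v$, which is the assertion; freeness and the stated basis are then immediate, and the rank equals $g=|\Delta(\Z)|$ as a consistency check with $H^0$ of the relative dualising sheaf of a regular model being free of rank $g$.
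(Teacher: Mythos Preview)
Your plan for the first inclusion (regularity of each $\omega_{ij}^v$ along every component via Proposition \ref{difforder}) matches the paper's. The saturation argument, however, has a real gap. Your claim that one can choose a $v$-face $F$ ``for which $F^*$ separates $(i_0,j_0)$ from the others'' is unfounded: the $F^*$ are the fixed affine functions attached to the $v$-faces of $\Delta_v$, not a supply of supporting functionals of $\Delta$, and there is no reason any one of them should isolate a chosen interior point. More seriously, your assertion that the reductions $\overline{\omega_{ij}}$ on $\bar X_F$ (for $(i,j)\in\Delta(\Z)$) are linearly independent fails precisely when $F$ is an \emph{interior} $v$-face, i.e.\ $\bar F\cap\partial\Delta=\emptyset$: Remark \ref{difind} applied to $\bar X_F$ gives exactly one relation, and its support includes all vertices of $F$, which for an interior face lie in $\Delta(\Z)$. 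So cancellation along $\bar X_F$ among your candidate terms \emph{can} occur, and the argument stalls there.

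The paper's fix turns this cancellation into the engine. Normalise the hypothetical extra section to $\tfrac1\pi\sum_{(i,j)\in\Sigma}u_{ij}\,\omega_{ij}^v$ with $u_{ij}\in O_K^\times$, and choose $P\in\Sigma$ minimising the \emph{fractional part} $\epsilon=v(P)-\lfloor v(P)\rfloor$ (not the coefficient valuation). For a $v$-face $F$ with $P\in\bar F$, minimality of $\epsilon$ together with lower convexity of $\Delta_v$ forces the index set $\Sigma_m\subset\Sigma$ of terms of minimal order along $\bar X_F$ to lie in $\bar F(\Z)$. Their cancellation then, via Remark \ref{difind}, forces $\Sigma_m$ to contain \emph{every vertex} of $F$; since $\Sigma_m\subset\Delta(\Z)$ this already gives $\bar F\cap\partial\Delta=\emptyset$, and since vertices are shared with neighbouring faces the argument propagates outward with the same $m$. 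Iterating across the connected graph of $v$-faces, every $v$-face avoids $\partial\Delta$, which is absurd. The missing idea is this propagation: you cannot find a single component where one term strictly dominates, but the unique relation of Remark \ref{difind} lets you chase $\Sigma_m$ from face to face until it is forced off the boundary of $\Delta$.
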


\begin{proof}
By Proposition \ref{difforder}, each $\pi^{\lfloor v(i,j)\rfloor} \omega_{ij}$ 
has order $\ge 0$ at every component of $\cC_k$. So they are global sections, linearly independent by 
Baker's theorem, and it remains to prove that the lattice they span is saturated in the global sections. 
Suppose not. Then there is a combination of the form
\begin{equation}\label{diffzerocomb}
  \frac 1{\pi}\sum_{(i,j)\in \Sigma} u_{ij} \omega_{ij}^v \qquad\qquad 
    (\emptyset\ne\Sigma\subset\Delta(\Z),\>\> u_{ij}\in O_K^\times),
\end{equation}
which is regular along every component. Pick $P\in\Sigma$ 
with $\epsilon=v(P)\!-\!\lfloor v(P)\rfloor$ maximal,
and a $v$-face $F$ with $P\in\bar F(\Z)$. Write
$$
  m = \ord_{\bar X_F}\frac 1{\pi}\,\pi^{\lfloor v(P)\rfloor} \omega_P; \qquad  
  m \overref={difforder} -\epsilon\delta_F-1<0.
$$
As \eqref{diffzerocomb} is supposed to be regular along $\bar X_F$, and all its terms 
have order~$\ge m$ on $\bar X_F$ by maximality of $\epsilon$ and lower convexity of $\Dv$,
those of order $m$ must cancel along $\bar X_F$. Let $\Sigma_m\subset \Sigma$ be their indices.

First, note that $\Sigma_m\subset\bar F(\Z)$. Indeed, else there is $P'\in\Sigma_m$ on some 
$v$-face $F'\ne F$ of $\Delta$ for which $v(P')\!-\!\lfloor v(P')\rfloor>\epsilon$, contradiction.
Now, from Remark \ref{difind} it follows that $\Sigma_m$ contains all vertices of $F$.
\marginpar{Expand}
This allows us to replace $F$ by any of the neighbouring $v$-faces, changing $P$ if necessary but
not changing $m$, and it also shows that $\bar F\cap\partial\Delta=\emptyset$ (as 
$\Sigma\cap\partial\Delta=\emptyset$). Proceeding inductively, we find that \eqref{diffzerocomb} 
cannot exist at all (cf. \cite{O'C}).
%
\end{proof}

\marginpar{
Easier to prove in semistable case, just $x\to \pi^m x, y\to \pi^n y$ as chart substitutions.
Incidentally, then they are also non-vanishing.
Assume $C$ is semistable. For one face this is a direct consequence of Baker, after rescaling.
For differentials, at least for integral faces, under $x=p^m X$, $y=p^n Y$, 
$
  \omega = p^{m(i-1)}X^{i-1}p^{n(j-1)}Y^{j-1}\frac{p^m dX}{p^{-n}\partial_yf} = p^{mi+nj} X^{i-1} Y^{j-1} \frac{dX}{\partial_yf}
$
and $mi+nj=v(i,j)$. Here $F(X,Y)=f(p^m X,p^n Y)=f(x,y)$.
}

\begin{example}
\label{exdeficient}
Let $C: y^2=\pi x^4+\pi^3$. If $\vchar k\ne 2$, then $C$ is $\Dv$-regular,
and its $\Delta_v$ and the special fibre of the \mrnc{} model are as follows:

\begin{center}
\EXDEFICIENT
\end{center}

It is a genus 1 curve with $C(K)=\emptyset$
by \ref{redpts}, so it is not an elliptic curve; it has bad reduction 
but its Jacobian has good reduction, by \ref{redcond}(2,5).
The differential $\pi dx/y$ spans the global sections of the relative dualising sheaf 
$\omega_{\cC_\Delta/O_K}$ by the above theorem, but it vanishes 
along the unique (reduced) component $\Gamma$ of the special fibre, by \ref{difforder}. Hence, 
$\omega_{\cC_\Delta/O_K}\>(=\!\!O(\Gamma))$ is not generated by global sections.
In contrast, $\omega_{\cC_\Delta/O_K}$ is trivial for genus 1 curves \emph{with a rational point},
see \cite[9.4,\,Exc.\,4.16]{Liu}.
\end{example}

\marginpar{General remarks on being generated by global sections?}

\begin{example}
\label{ex188}
Consider the genus 2 curve `188' from \cite{FLSSSW} at $p=2$,
$$
  C\colon y^2=x^5-x^4+x^3+x^2-2x+1.
$$
Letting $x\!\to\!\frac2{x-1},\>y\!\to\!\frac{y+1}{(x-1)^3}+\frac{x}{x-1}$, the equation becomes
$$
  y^2 + 2(x^3\!-\!2x^2\!+\!x\!+\!1)y + x(2x^2\!-\!5x\!+\!4)^2 = 0.
$$
This is $\Dv$-regular, with special fibre of the \mrnc{} model as follows:

\smallskip \qquad \OneEightEightCurve

\noindent
The differentials $dx/y$, $xdx/y$ form a basis of the relative dualising sheaf.
\end{example}

\endsection
\section{Example: Elliptic curves}
\label{sEll}

\def\Wei{{\text{\rm ($\cW$)}}}

\begin{table}[!htbp]
\input{ellc.inc} 
\vphantom{.}
\caption{Reduction types of elliptic curves (Theorem \ref{ellmain})}
\label{elltable}
\end{table}

As an application, we recover Tate's algorithm 
for elliptic curves\marginpar{Though not Ogg-Saito formula in the wild case},
and some immediate consequences for the Galois representation and the N\'eron 
component group.
All of this is well-known: see \cite[\S IV.9]{TaA,Sil2}+$\epsilon$\cite{tate} 
for Tate's algorithm
and the valuations of the coefficients of the minimal Weierstrass model, 
\cite{Kra,Roh,ST} for the Galois representation, and 
\cite[\S10.2,\,Exc.\,2.2]{Liu} for \rnc{} models of elliptic curves.

As always, we have $v: K^\times\surjects\Z$, $O_K$, $\pi$, $k$ and $p=\vchar k$. Recall that
an elliptic curve $E/K$ has a Weierstrass equation,
$$
  y^2 + a_1xy + a_3 y = x^3 + a_2 x^2 + a_4 x + a_6, \qquad a_i\in K,\>\Delta_E\ne 0.
  \eqno{(\cW)}
$$
Here $\Delta_E$ is the discriminant of $E$, defined via
\marginpar{$\Delta_E\to\Disc(E)$?}
$$
  b_2=a_1^2+4a_2,\quad 
  b_4=2a_4\!+\!a_1a_3,\quad 
  b_6=a_3^2\!+\!4a_6,
$$
$$  
  b_8=a_1^2a_6\!+\!4a_2a_6\!-\!a_1a_3a_4\!+\!a_2a_3^2\!-\!a_4^2,\quad 
  \Delta_E=-b_2^2b_8\!-\!8b_4^3\!-\!27b_6^2\!+\!9b_2b_4b_6.
$$
We write $f$ for LHS-RHS of \Wei, so that $f=0$ is the equation defining $E$.


\begin{theorem}
\label{ellmain}
Assume\footnote{Otherwise there are indeed 
more reduction types, classified in \cite{Szy}; the proof of \ref{ellmain} uses that a multiple root 
of a polynomial of degree $2$ or $3$ over $k$ is always $k$-rational.}
$k$ is perfect or $p>3$.
An elliptic curve $E/K$ has a $\Dv$-regular Weierstrass equation \Wei{}
with $\Delta_v$ as in Table \ref{elltable}, column 2,
up to removable faces. 
The Kodaira type of~$E$, special fibre of the \mrnc{} model, 
condition for $E$ to have tame reduction,
inertia action on $\H(E/\bar K,\Q_l)$ if $E$ is tame and $k$ is perfect, and the N\'eron component 
group $E(K)/E_0(K)$ are as in columns 1,3--6. 
%
%
\end{theorem}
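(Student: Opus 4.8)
The plan is to reduce the whole theorem to one statement: \emph{every elliptic curve $E/K$ admits a $\Dv$-regular Weierstrass equation \Wei{} whose Newton polytope $\Dv$ is one of the ten shapes in Table \ref{elltable}, up to removable faces.} Granting this, columns $1$ and $3$--$6$ are then read off mechanically from \ref{mainthm1}, \ref{minmain}, \ref{redcond}, \ref{tamecoh} and \ref{redpts}. To set up, observe that the Newton polygon $\Delta$ of $f=\text{LHS}-\text{RHS}$ of \Wei{} is contained in the triangle $T$ with vertices $(0,0),(3,0),(0,2)$, whose unique interior lattice point is $(1,1)$; so $g(E)=|\Delta(\Z)|=1$ as it must, and $\Dv$ is determined by the function $v$ on $\Delta$, i.e.\ by $v(a_1),v(a_2),v(a_3),v(a_4),v(a_6)$. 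The admissible Weierstrass coordinate changes $x\mapsto x+r$, $y\mapsto y+sx+t$, $x\mapsto\pi^2x$, $y\mapsto\pi^3y$ act on these valuations, and on $\Dv$ they are instances of the $\GL_2(\Z)$-and-monomial moves of \ref{defres} composed with the vertical shears coming from rescaling $\pi$-powers; none of them changes the isomorphism class of $C=E$.

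\textbf{Normal form for $\Dv$.} I would run the shape-level part of Tate's algorithm: each step is one of the admissible moves above, and each round either terminates or produces a change of variables after which $v(\Delta_E)$ strictly drops; since $v(\Delta_E)\ge 0$ this terminates, and at the end the valuations of the $a_i$ fall into one of the standard patterns, whose $\Dv$ is exactly the picture in column $2$ once one discards the coefficients whose exponents lie outside the closure of every non-removable $v$-face (these are the ``$\ge$'' entries in the pictures, and account for the ``up to removable faces'' clause). The hypothesis that $k$ is perfect or $p>3$ enters precisely here: it guarantees that every quadratic or cubic over $k$ that arises during the algorithm has its multiple (or all) roots rational over $k$, so the required translations are defined over $O_K$ — or over $O_{K^{nr}}$, which is enough since we only ever pass to $K^{nr}$ to see the geometric component group.

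\textbf{$\Dv$-regularity and columns $1,3,4,5$.} For each shape I would check that the $\overline{f|_F}$ and $\overline{f|_L}$ are smooth. In every additive type the only $v$-face $F$ is (a shear of) the full triangle, $\overline{f|_F}$ is its restriction to the sublattice spanned by $\bar F(\Z)_\Z$ and computes to a smooth plane cubic or conic of the form $Y^2=X^3+(\text{unit})$ or a quadratic twist thereof, smooth exactly under the stated condition on $p$; the edge schemes $X_L$ are cut out on the left, bottom and skew $1$-faces by $t^2+a_3t-a_6$, $t^3+a_2t^2+a_4t+a_6$, $t^2+a_1t-a_2$, which are separable once $v(\Delta_E)$ is minimal. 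For $\mathrm I_0$ and $\mathrm I_n$ the faces are genus $1$ or $0$ and regularity is immediate. Now \ref{mainthm1}+\ref{minmain} give the special fibre of $\cC_\Delta^{\min}$ in column $3$ — the Hirzebruch–Jung chains of \ref{sternbrocot} being the classical chains of $\mathrm I_n^*,\mathrm{IV}^*,\mathrm{III}^*,\mathrm{II}^*$ — whence the Kodaira type in column $1$; \ref{redcond}(4) gives column $4$, since a principal face has $p\nmid\delta_F$ iff the listed inequality on $p$ holds; and \ref{tamecoh} gives column $5$: in the tame additive cases $\Delta(\Z)_{\Z_p}=\{(1,1)\}$ and $\chi_{(1,1)}$ has order $\delta_F=6,4,3,2$, so $I_K$ acts on $H^1(E)$ through $\chi_{(1,1)}\oplus\chi_{(1,1)}^{-1}$, i.e.\ with image $C_{\delta_F}$, while in the $\mathrm I_n$ (resp.\ $\mathrm I_n^*$) case $(1,1)\in\Delta(\Z)^L_\Z$ (resp.\ after the quadratic twist) and $H^1(E)\cong\Sp_2$ (resp.\ its quadratic twist).

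\textbf{Component group and the main obstacle.} For column $6$, $E(K)/E_0(K)=\Phi_E(k)$: over $\bar k$ the group $\Phi_E$ depends only on the dual graph of $\cC_\Delta^{\min}$ and is computed from its intersection matrix via \eqref{selfinteq} in the standard way, and the $G_k$-action on $\Phi_E(\bar k)$ is the one induced by the $G_k$-action on the chains of $\P^1$s, which by \ref{mainthm1} is the action on $X_L(\bar k)$; by \ref{thmbaker}(3) the latter is the Galois action on the roots of the edge polynomials $t^2+a_3t-a_6$, $t^3+a_2t^2+a_4t+a_6$, $t^2+a_1t-a_2$ — exactly what $l,b,s$ record — so taking $G_k$-invariants (and using \ref{redpts} to match these with reductions of points) yields the groups tabulated. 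The genuine work is the normal-form step: showing that the minimal Weierstrass polytope is, up to removable faces, one of the ten pictures is tantamount to re-deriving the combinatorial skeleton of Tate's algorithm, and the bookkeeping — tracking which coefficients become removable and checking that the relevant translations live over $O_K$ or $O_{K^{nr}}$ (where the perfect/$p>3$ hypothesis is used) — is where essentially all the effort lies; the remaining steps are routine applications of \ref{mainthm1}--\ref{tamecoh}.
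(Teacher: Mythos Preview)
Your overall strategy matches the paper's: produce a $\Dv$-regular Weierstrass equation by Tate-style shifts, then read off everything from \ref{mainthm1}, \ref{minmain}, \ref{redcond}, \ref{tamecoh} and the action on $X_L(\bar k)$. A few points of imprecision are worth flagging. First, your termination claim ``$v(\Delta_E)$ strictly drops'' is not what happens under the translations $x\mapsto x+r$, $y\mapsto y+sx+t$ that do the actual work: these leave $\Delta_E$ unchanged. The paper's termination argument is instead that each such shift strictly increases $v(1,1)$, and $12\,v(1,1)\le v(\Delta_E)$ since $v(a_i)\ge i\,v(1,1)$; this is the self-contained replacement for invoking Tate's algorithm as a black box. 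Second, ``separable once $v(\Delta_E)$ is minimal'' is false as stated: a minimal Weierstrass model need not be $\Dv$-regular (e.g.\ $y^2=x^3-x$ at $p=2$). What is true, and what the paper proves, is that whenever one of the edge or face reductions is singular, a shift along that edge (with the multiple root, necessarily $k$-rational under the hypothesis) increases $v(1,1)$; so the process reaches a $\Dv$-regular model. Third, the additive types $\mathrm I_n^*$ with $n>0$ have two principal $v$-faces, not one. Finally, the shifts must live over $O_K$, not merely $O_{K^{nr}}$, since we want $\cC_\Delta$ defined over $O_K$; this is exactly where the hypothesis ``$k$ perfect or $p>3$'' is used, to guarantee the multiple roots of the degree $\le 3$ edge polynomials are $k$-rational. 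None of these is a fatal gap, but the paper's argument is organised around them rather than around an appeal to Tate.
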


\marginpar{The tweak is necessary because having minimal 
discriminant does not quite guarantee 
  that the model is $\Dv$-regular; e.g. $E: y^2=x^3-x$ is minimal at $p=2$, but 
  requires $x\mapsto x+1, y\mapsto y+x$ to get a $\Dv$-regular 
  model $y^2+2xy=x^3+2x^2+2x$ (of type III).
}

\marginpar{If $k$ is large enough, a shift can make it exactly like in the table, but
that is artificial.}


\marginpar{How about quartics?}

\begin{proof}
Start with a Weierstrass equation $(\cW)$. 
As $x=\pi^{-2} x', y=\pi^{-3} y'$ changes $a_i\mapsto \pi^i a_i$,
we can assume that all $v(a_i)\ge 0$. Write $P=(1,1)\in\Z^2$.

A transformation $x\mapsto x+r, y\mapsto y+sx+t$
with $r,s,t\in O_K$ keeps $\Delta_E$ unchanged, and $\Delta$ 
confined to the triangle (0,0)-(3,0)-(0,2).
As long as ($\cW$) is not $\Dv$-regular, we will see that there is such a transformation
that increases $v(P)$. Note that vertices of any 
$v$-face inside the triangle span an affine lattice of index $|12$ in $\Z^2$, so $12v(P)\in\Z$.
But $v(a_i)\ge i\,v(P)$ for all $i$, so $12v(P)\le v(\Delta_E)$ and the algorithm terminates.

There are four reasons why $f$ could be non-$\Dv$-regular:

\begin{enumerate}
\item 
$\Dv$ has $v$-edge $\lambda$ (left, (0,0)-(0,2)) with $\overline{f_\lambda}$ non-squarefree, and bounding a 
non-removable face.
\item
$\Dv$ has $v$-edge $\sigma$ (skew, (0,2)-(2,0)) with $\overline{f_\sigma}$ non-squarefree.
\item
$\Dv$ has $v$-edge $\beta$ (bottom, (0,0)-(2,0), (0,0)-(3,0) or (1,0)-(3,0)) with $\overline{f_\beta}$ 
non-squarefree, and bounding a non-removable face.
\item
$\Dv$ has a principal $v$-face $F$, with $P$ in its interior, and $\overline{f_F}=0$
is a geometrically singular subscheme of $\G_m^2$.
\end{enumerate}
In all four cases, the offending 1- or 2-face has denominator 1, for otherwise the corresponding 
reduction is linear and cannot define a singular scheme;
for the same reason, in (1) and (3) the face is assumed to be non-removable.
%

In case (1)-(3), the reduction $\overline{f_*}$ ($*\in\{\lambda,\sigma,\beta\}$) has a unique multiple 
root $\alpha\in k$, and a shift $y\mapsto y+\alpha\pi^d$, $y\mapsto y+\alpha\pi^d x$ or $x\mapsto x+\alpha\pi^d$ 
with $d$ defined by the slope of $*$ shifts the root to 0, breaking the 1-face $*$. Let us call this 
change of variables the `shift along $*$'. 

We refer to cases, i.e. table rows, by Kodaira types (first column). 

Step 1. 
%
%
(I$_n$)
If $P$ is a $v$-vertex of $\Delta$, then $v(1,0)\!>\!v(P), v(0,1)\!>\!v(P)$, 
and $x\mapsto x-\frac{a_3}{a_1}, y\mapsto y+\frac{a_4}{a_1}$ 
makes $a_3=a_4=0$; as $\Delta_E\ne 0$, we have $a_6\ne 0$, 
Now all $v$-faces are contractible, with linear reductions, and $f$ is always $\Dv$-regular.
The reduction type is I$_n$ (cf. Remark \ref{remprin}), split in this case.
\marginpar{Except we don't have a picture like that}

Step 2. If $\Delta_v$ has the $v$-edge $\sigma:$ (0,2)-(2,0), there are 3 possibilities.

If $\delta_\sigma=1$ and $\overline{f_\sigma}$ is not-squarefree, then a shift 
along $\sigma$ 
increases $v(P)$, and we go back to Step 1.

(I$_n$) If $\delta_\sigma=1$ and $\overline{f_\sigma}$ is squarefree, 
\marginpar{Except the face on the left can be a large triangle, and $\Dv$-regularity fails}
we are again in the I$_n$-case, possibly non-split
(with picture as in the table, up to possible removable 
faces). 

(I$_n^*$)
Otherwise $\delta_\sigma=2$, and $v(P)=\frac{m}{2}$, $v(2,0)=m$ for some odd $m$.
The only possibly singular faces are the $v$-edges $\lambda$ and $\beta$.
If one of them is, shift along it.
%
This increases the (smallest) slope of a face left of $P$. Since
$$  
  v(a_3),v(a_4)\ge t; v(a_6)\ge 2t \Rightarrow 
  v(b_4)\ge t; v(b_6),v(b_8)\ge 2t \Rightarrow v(\Delta_E)\ge 2t,
$$
this process terminates in a $\Dv$-regular model, of Type I$_n^*$.

Step 3. Finally suppose $P$ is neither $v$-vertex nor lies on a $v$-edge; in other words, it 
lies inside a unique principal $v$-face, say $F$. 

If $\{\bar f_F=0\}\subset\G_m^2$ is singular (Case (4)), it has arithmetic genus 1, implying $\delta_F=1$,
and the singular point is unique and $k$-rational. Use a shift in $x$ and $y$ to translate it to $(0,0)$.
If $v(P)$ increases or $P$ is not interior in a $v$-face any more, go back to Step 1.
Otherwise, the only possibly singular faces now are the $v$-edges $\lambda$ and $\beta$.
If one of them is, shift along it; this necessarily increases $v(P)$ and we go back to Step 1.
\marginpar{Check when $k$ is not perfect}

If we reached this stage, we now have a $\Dv$-regular model, with one non-removable
face. Depending on $v(P)=$integer+$0,\frac16,\frac14,\frac13,\frac12,\frac23,\frac34,\frac56$
(every face with vertices (3,0),(0,2) and somewhere else in $\Delta$ has $\delta_F|4$ or $|6$),
we get types I$_0$, II, III, IV, I$_0^*$, IV$^*$, III$^*$, II$^*$ after a rescaling.

Finally, the Galois action on the components of multiplicity 1 follows from Theorem \ref{mainthm1}(2),
and the `tame$\iff$' and `inertia if tame' columns in the table by Theorems 
\ref{redcond}(4) and \ref{tamecoh}%
\marginpar{Gives Tamagawa number, but not the Tamagawa group structure?}.
\end{proof}

\marginpar{Explain where the rest of the table comes from - Kraus + well-known, or 
  later sections}

\begin{remark}
One also recovers the behaviour of minimal models of elliptic curves in tame 
extensions, see \cite{tate} Thm 3 (1)-(3).
%
\marginpar{See \cite{tate} also for necessary and sufficient conditions for that reduction type
in terms of $\min v(a_i)/i$ and squarefreeness; 
or put in the table?}
\end{remark}

\marginpar{
Works quite well for hyperelliptic curves of arbitrary genus, except the need to handle different
singularities separately, and does not resolve ``multiple types'' such as 2IV-n. Note that,
as in Tate's algorithm, there is no difference between $p=2$ and $p$ odd.
}

\endsection
\section{Example: Fermat curves}
\label{sFermat}

Tate's algorithm relies on Weierstrass models having
a unique singularity that can be shifted to the origin. In higher genus, 
there may be multiple ones, and the algorithm `repeatedly shift to resolve a singularity'
of \ref{ellmain} often works but only \emph{locally}. 
Thus, \ref{mainthm2} may be applied for several choices of generators $x,y\in K(C)$ 
and the resulting schemes merged to a \rnc{} model.

For example, consider a genus 2 hyperelliptic curve
$$
  C/K\colon\> \pi Y^2=X^3(X-1)^2+\pi \qquad \qquad (\vchar k>3).
$$
The polynomial on the right
has a triple root $X\equiv 0$ and a double root $X\equiv 1$ mod $\pi$, so the equation 
is a model $\cC_{\rm sing}/O_K$ with two singular points 
\begin{center}
$P_1=(X,Y,\pi) \qquad\text{and}\qquad P_2=(X-1,Y,\pi)$.
\end{center}
Applying 
Theorem \ref{mainthm2} to $(x_1,y_1)=(X,Y)$ and $(x_2,y_2)=(X-1,Y)$
yields two schemes (with special fibres left and centre), both still singular:

\marginpar{ 
Need a theorem how to glue our models, at least in simplest necessary cases 
(multiple roots along a $v$-edge, translated separately to 0).
Every shift should be viewed as an isomorphism between open pieces of the two regular models
(open on the generic fibre, affine subset of a component on the special fibre).
Our explicit charts should give a formal way to glue different models together.
}

\EXGLUEING

\begin{center}
Shifts and glueing; red = singular point on the model
\end{center}

\noindent
The left one successfully resolves $P_1$ (dashed line and above) but not $P_2$ (red dot) and 
the centre one $P_2$ (dashed line and above) but not $P_1$ (red dot). 
They agree on two components --- thick one of multiplicity 1 coming from the thick $v$-edge of $\Delta$ 
above it, and a thick one of multiplicity 2 coming from the shaded $v$-face of $\Delta$ above it,
with $P_1, P_2$ removed. Glueing them together gives a \rnc{} model, bottom right.
Here, one \emph{could} resolve both $P_1$ and $P_2$ simultaneously by a M\"obius 
transformation that sends their $x$-coordinates to 0 and $\infty$ respectively (top right),
but that is generally not possible. 

\marginpar{$\Z$ or $\Z_p$?}
As an application, we construct regular models of Fermat curves 
$$
  x^p+y^p=1, \qquad\qquad p\ge 3
$$ 
over $\Z_p$ (and thus over its tame extensions as well, see Remark \ref{dvbasechange}). 
These are known over $\Z_p[\mu_p]$ \cite{McC,CM}, but seemingly not over $\Z_p$.
Let $x\mapsto x-y+1$, so that the new equation is
$$
  (x-y+1)^p + y^p = 1.
$$
The five monomials $x^p$, $px$, $-py$, $-py^{p-1}$, $-pxy^{p-1}$ determine
$\Delta$ and $\Dv$, as all the others have valuations $\ge 1$ and
lie inside the Newton polygon.

\smallskip
\quad\FERMATONE

\begin{center}
Example: $\Dv$ for $(x\!-\!y\!+\!1)^p + y^p = 1$ with $p=7$. 
\end{center}

\noindent
The $\Dv$-regularity conditions are automatic except at the leftmost $v$-edge $L$, where
the reduced equation is non-linear, namely
$$
  \bar f_L = \frac1p \bigl((1-y)^p + y^p - 1\bigr) \in \F_p[y].
$$
As shown in \cite[Lem. 3.1]{CM} (or \cite[p. 59]{McC}), considering the derivatives
$$
  \bar f_L' = -(1-y)^{p-1}+y^{p-1}, \qquad 
  \bar f_L'' = (p-1)(1-y)^{p-2}+(p-1)y^{p-2}
$$
and noting that the roots of $\bar f_L'$ are $2,3,...,p-1$, 
we see that all multiple roots of $\bar f_L$ are double and $\F_p$-rational%
\footnote{they are solutions to
$
  (1-y)^p \equiv 1 - y^p  \mod p^2,
$
viewed as elements of $\F_p\setminus\{0,1\}$.}.
Let $\rho$ be their number, and lift them to $r_1,...,r_\rho\in \Z$.
Pick one, say $r_i$, move it to zero with a shift $y\mapsto y+r_i$,
and apply Theorem \ref{mainthm2} to the shifted equation
$$
  (x-y+1-r_i)^p + (y+r_i)^p = 1. 
$$
The monomials defining its $\Dv$ are $x^p$, $-py^{p-1}$, $-pxy^{p-1}$,
$(r\!-\!1)^{p-1}px$, $c_2y^2$, and possibly $c_1y$, $c_0$. Here $c_0,c_1,c_2\in\Z$, and
$p||c_2$, $p^2|c_1$, $p^2|c_0$ by definition of a double root. 
The picture for $p=7$ is below (left):

\smallskip
\quad\FERMATTWO
\begin{center}
Example: resolving one of the singularities for $p=7$
\end{center}

We see that $\Dv$ has two principal $v$-faces, $F_1$ from $-py^{p-1}$, $x^p$, $c_2y^2$
of denominator $p$, and $F_2$ from $(r\!-\!1)^{p-1}px$, $x^p$, $c_2y^2$ of denominator $2(p-1)$. 
Theorem \ref{mainthm2} now produces a model $\cC_i$, whose only singularities still come 
from the $v$-edge $L$. So we have resolved one of the singular points. 

Glueing the models $\cC_i$ together as in the first example above, we get a
\mrnc{} model $\cC/\Z_p$, which is also minimal regular.
The special fibre $\cC_k$ has the following components,
all $\P^1$s meeting transversally:
\begin{itemize}
\item 
Component $X_p$ of multiplicity $p$,
\item
$p-2\rho$ chains from $X_p$ with multiplicities $p-1,p-2,...,2,1$,
\item
Components $Y_1,...,Y_\rho$ of multiplicity $2p-2$, meeting $X_p$,
\item
A chain with multiplicities $2p-2,2p-1,...,2,1$ from each $Y_i$,
\item
A component of multiplicity $p-1$ from each $Y_i$.
\end{itemize}

When $p=3$, we have $\rho=0$ and $\cC_k$ is a type IV$^*$ elliptic curve, 
see Table~\ref{elltable}.

The multiplities on the chains come from the slopes. For example,
to see why $Y_i$ meets $X_p$ transversally, consider $\cC_i$ and compute the slopes at 
the $v$-edge $L'$ where $F_1$ meets $F_2$.
In the notation of \ref{defslopes}, let $P=(\frac{p-1}2,1)$.~Then 
$$
  s_1^{L'}=\tfrac{p}{2p-2},\qquad s_2^{L'}=\tfrac{(p+1)/2}{p},\qquad \smalldet p{(p+1)/2}{2p-2}{p}=1.
$$
The chain between $Y_i$ and $X_p$ is therefore empty; it continues to be empty 
in tame extensions of $\Z_p$ with $e|(2p-2)$.

\begin{remark}
\marginpar{Into intro?}
Theorem \ref{mainthm2} (and glueing as above) 
also gives an alternative approach to the construction of regular models 
\marginpar{And probably the semistability criterion}
for semistable hyperelliptic curves in odd residue characteristic \cite{M2D2}.
\end{remark}

%

%


\endsection
\section*{Appendix A. Elementary facts about lattice polygons in $\R^2$}
\let\oldthesection\thesection
\def\thesection{A}
\setcounter{equation}{0}


We start with equivalent characterisations of $\delta_F$ for 
a $v$-face $F$, and verify a statement that reflects rationality 
of \'etale cohomology (Thm. \ref{tamecoh}). 

\begin{lemma}
\label{lemdelta}
Let $F\subset\R^2$ be a convex lattice polygon,
\marginpar{lattice function?}
and $v: F\to\R$ an affine function, $\Z$-valued on the vertices of $F$. 
\begin{enumerate}
\item If $F$ has positive volume, then the following three numbers are equal:
\begin{itemize}
\item
$A = $ index of the lattice $v^{-1}(\Z)$ in $\Z^2$. 
\item 
$B = $ common denominator of $v(P)$ for $P\in\Z^2$. 
\item 
$C = $ common denominator of $v(P)$ for $P\in \bar F(\Z)$. 
\marginpar{Also, for every $v$-edge $L\subset \partial F$ and any $P\in\Z^2$ with $\LS(P)=1$,
we have $\delta_F=\lcm(\delta_L,\delta_P)$.}
\end{itemize}
\item
Fix $d\in\N_{\ge 2}$. Define two counting functions: for $n\in(\Z/d\Z)^\times$ let
$$
  N(\tfrac nd) = |\Delta(\Z)_{\frac nd+\Z}|, \qquad 
  \bar N(\tfrac nd) = |\bar\Delta(\Z)_{\frac nd+\Z}|.
$$
\begin{itemize}
\item[(i)]
$n\!\mapsto\! N(\frac nd)\!+\!N(\frac {-n}d)$, $n\!\mapsto\! \bar N(\frac nd)\!+\!\bar N(\frac {-n}d)$
are constant on $(\Z/d\Z)^\times$.
\item[(ii)]
If $F$ is a line segment or a parallelogram, 
then $N$, $\bar N$ are \hbox{constant}.
\end{itemize}
\end{enumerate}
\end{lemma}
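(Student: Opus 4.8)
\textbf{Proof plan for Lemma \ref{lemdelta}.}

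The plan is to dispatch part (1) first, since it is pure lattice geometry. The key observation is that $v\colon F\to\R$ extends uniquely to an affine function $v\colon\R^2\to\R$, say $v(i,j)=\alpha i+\beta j+\gamma$ with $\alpha,\beta,\gamma\in\Q$ (rationality follows because $v$ takes integer values at the at least three affinely independent vertices of a positive-volume $F$). Then $v^{-1}(\Z)$ is an affine sublattice of $\Z^2$, and its index $A$ equals the common denominator of the values $\{v(P)\mid P\in\Z^2\}$, which is $B$, by the standard fact that for an affine-linear map $\Z^2\to\Q/\Z$ the image is cyclic of order equal to the index of the kernel. The inequality $C\le B$ is trivial since $\bar F(\Z)\subseteq\Z^2$; for the reverse I would use that $\bar F(\Z)$ affinely spans $\R^2$ (positive volume) and that the denominators of $v$ on an affine-spanning set of integer points already force the denominator on all of $\Z^2$ — concretely, pick a vertex $Q_0$ and two vertices $Q_1,Q_2$ with $Q_1-Q_0,Q_2-Q_0$ a basis of a finite-index sublattice of $\Z^2$; writing any $P\in\Z^2$ as a $\Q$-combination shows $\den v(P)$ divides $\mathrm{lcm}$ of the relevant denominators, but a cleaner route is to note $v^{-1}(\Z)\supseteq$ the affine lattice generated by $\bar F(\Z)\cap v^{-1}(\Z)$, hence $A=[\Z^2:v^{-1}(\Z)]$ divides $C$. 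I would write this as: $B=A$ by the kernel-index argument, $A\mid C$ because $\bar F(\Z)$ spans and its $\Z$-valued points generate a sublattice of $v^{-1}(\Z)$ of index dividing $C/\!\gcd$, and $C\mid B$ trivially, giving $A=B=C$.

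For part (2)(i), the idea is to exhibit an explicit $\Z$-linear bijection realising the symmetry $n\leftrightarrow n'$ for $n,n'\in(\Z/d\Z)^\times$. Fix $n$ and pick $u$ with $un\equiv n'\pmod d$. The affine function $v$ has some denominator $\delta=\delta_F$ with $d\mid\delta$ or $d\mid$ whatever is relevant; the point is that $\Delta(\Z)_{\frac nd+\Z}$ depends only on $v\bmod\Z$ restricted to $\Delta(\Z)$. Precomposing the picture by an element of $\mathrm{GL}_2(\Z)$ that is an affine symmetry of $\Delta$ is too much to hope for in general, so instead I would argue as in the proof of Theorem \ref{tamecoh}: the formal sum $\sum_{n}N(\tfrac nd)\chi^n$ (over primitive characters $\chi$ of order dividing $d$) is, up to the paired term, the character of the inertia representation on a piece of $H^1$, which is \emph{rational}, hence invariant under $\mathrm{Gal}(\Q(\zeta_d)/\Q)$, i.e.\ under $n\mapsto un$ — but that is circular since \ref{tamecoh} quotes \ref{lemdelta}. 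So the honest route is combinatorial: show directly that $N(\tfrac nd)+N(\tfrac{-n}d)$ counts interior integer points $P$ with $\delta v(P)\equiv\pm n\cdot(\text{something})\pmod{d'}$ and that this count is governed by a quantity symmetric under units — concretely, for each residue class the contribution is controlled by the \emph{width} of $\Delta$ in the direction $\nabla v$, which does not see $n$. I expect this width/slicing argument to be the main obstacle: one must slice $\Delta$ by the level sets $v=\text{const}$, observe each slice $\{v=c\}\cap\Delta$ is a segment whose number of interior lattice points depends only on $c\bmod\Z$ through its endpoints' fractional behaviour, and then sum, checking the $n\mapsto un$ invariance of the total. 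For (2)(ii) this becomes transparent: for a line segment all slices are points or empty and $\Delta(\Z)=\emptyset$ so $N\equiv 0$; for a parallelogram, after a $\mathrm{GL}_2(\Z)$-change of coordinates $v$ is (a multiple of) one of the coordinates, so the fibres $\{v=c\}$ are all translates of a fixed segment, each contributing the same count of interior points regardless of the residue $\tfrac nd$, giving constancy of $N$ and $\bar N$ on the nose.

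The hard part will be making (2)(i) self-contained without invoking the cohomological rationality it is meant to support; I would isolate it as a clean statement about counting lattice points in a convex polygon along parallel rational hyperplanes and prove it by the Ehrhart-style quasi-polynomial argument — the counting function $c\mapsto\#(\{v=c\}\cap\Delta\cap\Z^2)$ is piecewise linear in the real variable $c$ with rational breakpoints, and summing its values over an arithmetic progression $\{c\equiv\tfrac nd\pmod 1\}$ produces a sum whose dependence on $n$ enters only through boundary terms that cancel between $n$ and $-n$. Once that lemma is in hand, (i) follows by specialising $c$ to $v(P)$ for interior (resp.\ all) integer points $P$. I would keep the exposition short, citing \cite[Ch.\,III]{HW} for the elementary Farey/continued-fraction facts already used in Remark \ref{sternbrocot} and only spelling out the slicing argument in detail.
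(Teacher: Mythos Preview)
Your treatment of part (1) is fine and matches the paper's approach in spirit: $A=B$ by the kernel--index argument, $C\mid B$ trivially, and $B\mid C$ because $\bar F(\Z)$ affinely spans. The paper phrases the last step geometrically (pick a lattice triangle inside $F$, reflect to a parallelogram, tile the plane), but the content is the same.

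For part (2), however, you are missing the key idea, and your proposed workaround is both unnecessary and not obviously completable. The paper's argument runs in the \emph{opposite} order from what you expect: it proves (2ii) first, then reduces (2i) to it. For a line segment or a parallelogram, translate a vertex to the origin so that $v$ becomes linear; then the half-open parallelogram is a fundamental domain for the sublattice $\Lambda$ spanned by its edge vectors, and $v\colon \Z^2/\Lambda\to\Q/\Z$ is a group homomorphism, so all nonzero fibres have the same size. (Your claim that ``the fibres $\{v=c\}$ are all translates of a fixed segment'' is false in general --- the parallelogram need not have edges parallel to the level sets of $v$ --- but the fundamental-domain argument replaces it.)

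Now for (2i), the trick you are missing is simply this: if $F$ is a triangle, reflect it through one edge $L$ to complete it to a parallelogram $R=F\cup L\cup F'$. Under the central symmetry $P\mapsto 2\cdot(\text{centre of }R)-P$ exchanging $F$ and $F'$, the value $v(P)$ goes to $-v(P)\bmod\Z$ (because $2v(\text{centre})=v(Q_1)+v(Q_3)\in\Z$ for opposite vertices $Q_1,Q_3$). Hence $\bar N_F(\tfrac nd)+\bar N_F(\tfrac{-n}d)=\bar N_R(\tfrac nd)+\bar N_L(\tfrac nd)$, and the right-hand side is constant in $n$ by (2ii). A general polygon is handled by triangulating and applying the same reflection to each triangle, using (2ii) on the shared edges. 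This is a two-line argument; your Ehrhart-style slicing plan, besides being vague at the crucial step (``boundary terms that cancel between $n$ and $-n$'' --- why?), is working much harder than necessary.
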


\begin{proof}
(1)
$A\!=\!B$, $C|B$: clear from definitions.
$B|C$: Replace $F$ by any lattice triangle $T\subset F$. Now 
reflect $T$, completing it to a parallelogram, and tile the plane with it.

(2\emph{i}) 
\marginpar{see etalemults.m for a check}
Suppose $F$ is a line segment. 
Translate an endpoints of $F$ to $(0,0)$, and replace $v\mapsto v\!-\!v(0,0)$. This does not affect
the statement, but now $v: V(\Z)\to\Q$ is linear, where $V$ is the line generated by $F$. 
It descends to $v: \bar F(\Z)\setminus\text{\{endpoint\}}\to\Q/\Z$, and all reduced fractions $\frac nd$ 
have preimage sizes independent of $n$.
The same argument works for a parallelogram.

(2\emph{ii}) 
If $F$ is a triangle, reflect it in one of its edges $L$, completing it to 
a parallelogram $R=F\cup F'\cup L$. The values of $v$ on $F'$ are \emph{minus} those on $F$, 
so the claim follows from (2\emph{i}) for $L$ and for $R$.
In general, break $F$ into triangles and use (2\emph{i}) on the edges.
\end{proof}

\begin{lemma}
\label{lemphisum}
Let $\Delta\subset\R^2$ be a convex lattice polygon with positive volume, 
and $\cL=\{$1-dim faces of~$\Delta\}$.
For $L\in\cL$ let $\LS: \Z^2\surjects\Z$ be the unique
affine function vanishing on $L$ and non-negative on $\Delta$.
Then for every $(i,j)\in\Z^2$,
$$
  \sum_{L\in\cL} (|\bar L(\Z)|-1)\>(\LS(i,j)-1) \>\>=\>\> 2|\Delta(\Z)|-2.
  \eqno{(*_\Delta)}
$$
\end{lemma}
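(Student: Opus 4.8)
The expression on the left of $(*_\Delta)$ is an affine function of $(i,j)$, so the plan is first to show its linear part vanishes, and then to evaluate the resulting constant. For an edge $L$, write $\ell_L=|\bar L(\Z)|-1$ for its lattice length, and let $(a_L,b_L)$ be the linear part of $\LS$; since $\LS$ is surjective onto $\Z$, the vector $(a_L,b_L)$ is the \emph{primitive} inward normal to $L$. If $L$ runs from vertex $P$ to the next vertex $Q$ counterclockwise, then $Q-P=\ell_L w_L$ with $w_L$ primitive and $w_L\perp(a_L,b_L)$, so $(a_L,b_L)$ is obtained from $w_L$ by a fixed $90^\circ$ rotation. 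Summing edge vectors around the closed polygon gives $\sum_L\ell_L w_L=\sum_L(Q-P)=0$, hence $\sum_L\ell_L(a_L,b_L)=0$ after applying the rotation. Therefore $\sum_L\ell_L\LS$ is a constant function of $(i,j)$, and so is the left side of $(*_\Delta)$.

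Next I would pin down the constant by a volume computation. Pick any point $x$ in the interior of $\Delta$ (a real point suffices, since we already know the quantity is affine and constant). Decomposing $\Delta$ into the triangles with apex $x$ and bases the edges $L$, the triangle over $L$ has Euclidean base $\ell_L|w_L|$ and height equal to the distance from $x$ to the line of $L$, namely $\LS(x)/|(a_L,b_L)|=\LS(x)/|w_L|$; its area is thus $\tfrac12\ell_L\LS(x)$. Summing over $L$ yields $\vol(\Delta)=\tfrac12\sum_L\ell_L\LS(x)$, i.e.
$$
  \sum_{L\in\cL}\ell_L\,\LS(i,j)\;=\;2\vol(\Delta)\qquad\text{for all }(i,j)\in\Z^2.
$$

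Finally I would combine this with Pick's theorem. The left side of $(*_\Delta)$ equals $\sum_L\ell_L\LS(i,j)-\sum_L\ell_L=2\vol(\Delta)-B$, where $B=\sum_L\ell_L=|\partial\Delta\cap\Z^2|$ (each edge contributes $\ell_L$ lattice points after omitting one endpoint, and going once around the boundary every vertex is omitted exactly once). Pick's theorem gives $\vol(\Delta)=|\Delta(\Z)|+\tfrac{B}{2}-1$, so $2\vol(\Delta)-B=2|\Delta(\Z)|-2$, which is $(*_\Delta)$. The only points needing care are the orientation bookkeeping for the normal vectors in the first step and the identity $B=\sum_L\ell_L$; since $\vol(\Delta)>0$ is assumed, $\Delta$ is genuinely $2$-dimensional and the triangle decomposition and Pick's formula apply without degenerate cases.
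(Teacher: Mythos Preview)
Your proof is correct and takes a genuinely different route from the paper. The paper argues by induction on $|\bar\Delta(\Z)|$: the base case is a primitive lattice triangle, where both sides of $(*_\Delta)$ vanish, and the inductive step cuts $\Delta$ along a lattice segment $L_0$ into two smaller polygons $A,B$; one checks that $(*_A)+(*_B)=(*_\Delta)-2(|L_0(\Z)|-1)$ on each side, because the contributions of $L_0$ to $A$ and $B$ use opposite affine functions $L_0^*$ and so cancel to a constant. Your argument instead identifies the left side directly as $2\vol(\Delta)-B$ via the fan/triangulation formula $\sum_L\ell_L\,L^*=2\vol(\Delta)$ and the boundary count $B=\sum_L\ell_L$, and then invokes Pick's theorem. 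Your approach is more conceptual---it exposes $(*_\Delta)$ as a repackaging of Pick---while the paper's induction is self-contained and avoids quoting Pick (though of course Pick's theorem is itself proved by exactly this kind of cutting argument). The orientation bookkeeping and the identity $B=\sum_L\ell_L$ that you flag as needing care are both fine as written.
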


\begin{proof}
Induction on $|\bar\Delta(\Z)|$. If $|\bar\Delta(\Z)|$=3, then LHS$(*_\Delta)$=RHS$(*_\Delta)$=0. 
Otherwise cut $\Delta$ into two lattice polygons $A$, $B$; say $L_0=A\cap B$.
Then $(*_A)$, $(*_B)$ hold by induction and we claim that adding them up 
gives $(*_\Delta)$ minus $2(|L_0(\Z)|-1)$ in both sides.
For the right-hand side,
$$
  2|A(\Z)|\!-\!2 \>+\> 2|B(\Z)|\!-\!2 = 2(|\Delta(\Z)|-|L_0(\Z)|) -4.
$$
For the left-hand side, the terms of $(*_A)$, $(*_B)$ add up to those of $(*_\Delta)$
except the contribution from $L_0$. But $L_0^*$ with respect to $A$ is minus that for $B$,
so the two contributions also add up to $-2$ times $|L_0(\Z)|-1$.
%
%
\end{proof}

Finally, the proof of Baker's theorem \ref{thmbaker} used that
the differentials $\omega_{ij}$ behave well under $\Aut\G_m^2$:

\begin{lemma}
\label{diffGm}
Let $C: \{f(x,y)=0\}\subset\G_{m,K}^2$ be a smooth curve.
Consider a transformation of $\G_{m,K}^2$
$$
\begin{array}{llllll}
  x=\XX^a \YY^b && \XX=x^d y^{-b} \cr
  y=\XX^c \YY^d && \YY=x^{-c} y^a \cr
\end{array} 
\qquad\text{with}\qquad
\Bigl|\begin{matrix} a & b \cr c& d\end{matrix}\Bigr|=1.
$$
Let $F(\XX,\YY)=f(\XX^a \YY^b,\XX^c \YY^d)$, and assume that $f'_y$ and $F'_\YY$ are not identically zero. Then
$$
  \frac{1}{xy} \frac{dx}{f'_y} = \frac{1}{\XX\YY} \frac{d\XX}{F'_\YY}.
$$
\end{lemma}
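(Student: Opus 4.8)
The whole statement lives in the module of Kähler differentials $\Omega^1_{K(C)/K}$, which is $1$-dimensional over $K(C)$. Since the displayed substitution is an automorphism of $\G_{m,K}^2$ (its inverse is the second pair of formulas, again of determinant $1$), it identifies $K(C)=K(x,y)/(f)$ with $K(\XX,\YY)/(F)$, so $dx$, $d\XX$, $dy$, $d\YY$ are all elements of the same $1$-dimensional space, and $f'_y\ne0$, $F'_\YY\ne0$ guarantee that $dx$ and $d\XX$ are nonzero there (so we may divide by $xyf'_y$ and by $\XX\YY F'_\YY$). First I would record the Jacobian of the monomial change of variables: since $x=\XX^a\YY^b$, $y=\XX^c\YY^d$ and $\XX,\YY$ are units,
\[
 \partial_\XX x=\tfrac{ax}{\XX},\quad \partial_\YY x=\tfrac{bx}{\YY},\quad \partial_\XX y=\tfrac{cy}{\XX},\quad \partial_\YY y=\tfrac{dy}{\YY}.
\]

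Next, apply the chain rule to $F(\XX,\YY)=f(\XX^a\YY^b,\XX^c\YY^d)$, obtaining the two identities
\[
 \XX F'_\XX = a\,xf'_x + c\,yf'_y,\qquad \YY F'_\YY = b\,xf'_x + d\,yf'_y,
\]
and then eliminate $xf'_x$: the combination $a\,\YY F'_\YY - b\,\XX F'_\XX$ collapses, by $ad-bc=1$, to exactly $yf'_y$. Meanwhile, writing $dx=\partial_\XX x\,d\XX+\partial_\YY x\,d\YY$ and using $F'_\XX\,d\XX+F'_\YY\,d\YY=0$ on $C$ to substitute $d\YY=-(F'_\XX/F'_\YY)\,d\XX$, I get
\[
 dx = x\Bigl(\tfrac{a}{\XX}-\tfrac{b}{\YY}\tfrac{F'_\XX}{F'_\YY}\Bigr)d\XX = \tfrac{x}{\XX\YY F'_\YY}\bigl(a\,\YY F'_\YY - b\,\XX F'_\XX\bigr)d\XX = \tfrac{xyf'_y}{\XX\YY F'_\YY}\,d\XX.
\]
Dividing both sides by $xyf'_y$ (legitimate since it is a nonzero element of $K(C)$) yields $\tfrac{1}{xy}\tfrac{dx}{f'_y}=\tfrac{1}{\XX\YY}\tfrac{d\XX}{F'_\YY}$, as claimed.

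There is essentially no genuine obstacle: the lemma is a two–line Jacobian computation once the setup is right, and the only point that needs a word of care is the bookkeeping that everything is happening in $\Omega^1_{K(C)/K}$ (so the relation $df=0$ and its analogue $dF=0$ may be used, and division by $f'_y$ and $F'_\YY$ is allowed). The "content", such as it is, is the determinant cancellation $a\,\YY F'_\YY - b\,\XX F'_\XX = (ad-bc)\,yf'_y = yf'_y$; a symmetric computation would of course also give $\tfrac{1}{xy}\tfrac{dy}{f'_x}=\tfrac{1}{\XX\YY}\tfrac{d\YY}{F'_\XX}$ if needed.
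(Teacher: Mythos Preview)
Your proof is correct and follows essentially the same approach as the paper: chain rule on the monomial change of variables, the relation $dF=0$ on $C$ to eliminate $d\YY$, and the determinant identity $ad-bc=1$ to collapse the combination to $yf'_y$. The only cosmetic difference is that the paper applies the chain rule to $f(x,y)=F(x^d y^{-b},x^{-c}y^a)$ (the inverse direction) and organises the calculation with logarithmic differentials $dx/x,\,dy/y$, whereas you go forward from $F(\XX,\YY)=f(\XX^a\YY^b,\XX^c\YY^d)$ and work with $dx$ directly; both routes amount to the same two-line Jacobian computation.
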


\begin{proof}
From the chain rule for $f(x,y)=F(x^d y^{-b},x^{-c} y^a)$, we find
$$
\begin{pmatrix} xf'_x \cr yf'_y\cr \end{pmatrix} =
\begin{pmatrix} d&-c\cr -b&a\cr    \end{pmatrix}
\begin{pmatrix} \XX F'_\XX \cr \YY F'_\YY\cr \end{pmatrix}.
\eqno{(*)}
$$
On the other hand, using 
$\frac{(gh)'}{gh}=\frac{g'}{g}+\frac{h'}{h}$
and the relation $(dF=)F_\XX'd\XX+F_\YY'd\YY=0$ on $C$, we have
$$
\begin{pmatrix} \frac{dx}x \\[2pt] \frac{dy}y\cr \end{pmatrix} = 
\begin{pmatrix} a&b\cr c&d\cr    \end{pmatrix}
\begin{pmatrix} \frac{d\XX}\XX \\[2pt] \frac{d\YY}\YY\cr \end{pmatrix} = \frac 1{\XX\YY}
\begin{pmatrix} a&b\cr c&d\cr    \end{pmatrix}
\begin{pmatrix} \YY F'_\YY \cr \XX F'_\XX \cr \end{pmatrix} \frac{d\XX}{F'_\YY}.
\eqno{(**)}
$$
Combining $(*)$ (2nd row) and $(**)$ (first row), the formula follows.
\end{proof}

\let\thesection\oldthesection
\endsection
\section*{Appendix B. Inertia invariants on \'etale cohomology}
\let\oldthesection\thesection
\def\thesection{B}
\setcounter{equation}{0}

\marginpar{See AppB.jpg}

Let $O_K$ be a complete DVR, with field of fractions $K$ and perfect residue field $k$ of 
characteristic $p\ge 0$. 

\begin{theorem}
\label{bw2}
Let $\cC/O_K$ be a proper regular model of a smooth projective geometrically connected 
curve $C/K$. Write
\begin{itemize}
\item
$\cA=$ N\'eron model of $\Jac C$ over $O_K$,
\item
$\cC_{\bar k}, \cA_{\bar k}=$ special fibres of $\cC$ and $\cA$ base changed to $\bar k$,
\item 
$d=$ gcd(multiplicities of components of $\cC_{\bar k}$),
\item
\marginpar{Do we need all these subscripts? $[l^n]$ should force them automatically}
$\Phi=$ component group $\cA_{\bar k}/\cA^0_{\bar k}$.
\end{itemize}
For every prime $l\ne\vchar k$ and $n\ge 1$, there is a natural $G_k$-homomorphism
$$
  \Pic(\cC_{\bar k})[l^n] \lar \Pic(C_{\bar K})[l^n]^{I_K},
$$
with kernel killed by $d$ and cokernel killed by $d|\Phi|$. In the limit,
\begin{equation}\label{eqininv}
  V_l(\cA_{\bar k}^0) \>\>\iso\>\>
  V_l \Pic(\cC_k) \>\>\iso\>\> V_l\Pic(C_K)^{I_K}
\end{equation}
as $G_k$-modules; when $l\nmid d|\Phi|$, the same is true with $\Z_l$-coefficients.
\end{theorem}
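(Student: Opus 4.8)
The statement to prove is Theorem \ref{bw2}: the comparison between $\Pic$ of the special fibre of a regular model and the inertia invariants of $\Pic$ of the curve over $\bar K$, together with its $V_l$-consequence \eqref{eqininv}. The plan is to follow the Picard-functor approach of Bosch--Liu--Raynaud / SGA 7, which relates the three objects
$$
  \cA^0_{\bar k}, \qquad \Pic^0_{\cC_{\bar k}/\bar k}, \qquad \Pic^0(C_{\bar K})^{I_K}
$$
through the theory of the relative Picard scheme $\Pic_{\cC/O_K}$ and its identification with the N\'eron model of $\Jac C$. First I would recall that, since $\cC/O_K$ is proper regular with $C$ smooth projective geometrically connected, the identity component $\Pic^0_{\cC/O_K}$ represents the relative Picard functor away from the components' multiplicity issues, and by the theorem of Raynaud (see \cite[9.5/4, 9.6/1]{BLR}) the smooth locus of $\Pic^0_{\cC/O_K}$, or rather $\Pic^0_{\cC/O_K}/(\text{torsion closure})$, is canonically $\cA$, the N\'eron model of $\Jac C$. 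The number $d=\gcd$ of multiplicities enters exactly here: the cokernel of $\Pic^0_{\cC/O_K}\to\cA$ and the failure of $\Pic$ to be separated are controlled by $d$, which is why the kernel of the map in the statement is killed by $d$.

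Next I would set up the natural $G_k$-homomorphism $\Pic(\cC_{\bar k})[l^n]\to\Pic(C_{\bar K})[l^n]^{I_K}$. Since $l\neq \vchar k$, multiplication by $l^n$ is \'etale, and one obtains the map by: restricting a line bundle on $\cC_{\bar k}$ along the closed immersion into $\cC_{\bar O_K}$ (the base change to the strict henselisation), then noting that $\Pic(\cC_{\bar O_K})\to\Pic(C_{\bar K})$ lands in the $I_K$-invariants because everything over $\bar O_K$ is defined over the fixed field of $I_K$. On torsion, proper base change / the specialisation map for the smooth proper $\cA$ gives that $\Pic(\cC_{\bar k})[l^n]$ surjects onto $\cA_{\bar k}^0[l^n]$ up to $d$-torsion, and the component group $\Phi$ accounts for the remaining discrepancy between $\cA_{\bar k}[l^n]$ and $\cA_{\bar k}^0[l^n]$; hence the cokernel is killed by $d|\Phi|$. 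Passing to the inverse limit over $n$ and tensoring with $\Q_l$ kills all the bounded-torsion kernels and cokernels, yielding the isomorphisms \eqref{eqininv}; when $l\nmid d|\Phi|$ the kernels and cokernels vanish on the nose, so the $\Z_l$-coefficient statement follows. The $G_k$-equivariance is automatic since every construction (restriction, specialisation, the identification with $\cA$) is functorial for the $G_k$-action induced by descent from $\bar k$ to $k$.

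The main obstacle I expect is the precise bookkeeping of the $d$ and $d|\Phi|$ bounds — i.e. extracting sharp annihilators for the kernel and cokernel rather than just "finite". This requires the careful analysis of the exact sequence relating $\Pic_{\cC/O_K}$, its maximal separated quotient, and the N\'eron model, together with Raynaud's description of $\Phi$ in terms of the intersection matrix of $\cC_{\bar k}$ and the multiplicities of its components; the identity $V_l\Pic(\cC_k)\iso V_l\cA^0_{\bar k}$ is then formal, but justifying that the specialisation map on $l^n$-torsion has kernel and cokernel annihilated exactly by $d$ and $d|\Phi|$ (rather than some larger multiple) is where the real work lies. Everything else — the construction of the comparison map, $G_k$-equivariance, and the limiting argument — is routine given the results of \cite{BLR} Chapter 9 and \cite{SGA7I}.
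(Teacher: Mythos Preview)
Your plan is broadly on the right track and would work, but it takes a different route from the paper and has a couple of imprecisions worth flagging.

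\textbf{Comparison with the paper.} The paper does not go through the relative Picard scheme $\Pic_{\cC/O_K}$ and Raynaud's identification with the N\'eron model. Instead it follows \cite[Prop.~2.6]{BW} and \cite[\S10.4]{Liu} directly: over the strict henselisation it writes down the explicit maps
\[
  D \xrightarrow{\lambda} \Pic X \xrightarrow{\phi} D^\vee,
  \qquad \alpha=\phi\circ\lambda \ (\text{intersection pairing}),
\]
where $D$ is the free group on components of the special fibre, and uses the resulting $3\times 3$ diagram to identify $\ker\phi[l^n]\cong\Pic X[l^n]\cong\Pic X_s[l^n]$ (the last via \cite[10/4.14--15]{Liu}) and $\ker\psi[l^n]\hookrightarrow\Pic X_\eta[l^n]$ with cokernel in $\Phi[l^n]$ (via \cite[10/4.12]{Liu}). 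The bound by $d$ on the kernel falls out because $\ker\alpha=(d^{-1}X_s)\Z$ and $X_s\in\ker\lambda$. Finally $\Pic X_\eta=(\Pic X_{\bar\eta})^{I_K}$ comes from the Hochschild--Serre sequence plus $\Br(K^{nr})=0$. This approach is more elementary and makes the annihilators $d$ and $d|\Phi|$ completely transparent; your approach via \cite[Ch.~9]{BLR} packages the same content more abstractly, but extracting the sharp bounds from Raynaud's theorem would require unwinding it back to essentially these same exact sequences.

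\textbf{Two small issues in your write-up.} First, your description of the map is garbled: you cannot ``restrict a line bundle on $\cC_{\bar k}$ along the closed immersion into $\cC_{\bar O_K}$''; the restriction goes the other way. What you actually need is that $\Pic(\cC_{O_{K^{nr}}})[l^n]\to\Pic(\cC_{\bar k})[l^n]$ is an isomorphism (this is the content of \cite[10/4.14--15]{Liu}, or proper base change), which you then invert and compose with restriction to the generic fibre. Second, the sentence ``proper base change / the specialisation map for the smooth proper $\cA$ gives that $\Pic(\cC_{\bar k})[l^n]$ surjects onto $\cA_{\bar k}^0[l^n]$ up to $d$-torsion'' conflates two different objects and directions; the specialisation isomorphism for $\cA$ compares $\cA_{\bar K}[l^n]^{I_K}$ with $\cA_{\bar k}[l^n]$, not $\Pic(\cC_{\bar k})$ with $\cA^0_{\bar k}$. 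These are fixable, but as written the construction of the map is not correct.
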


\begin{proof}
This is essentially well-known, and the proof follows \cite[Prop 2.6]{BW} and \cite[\S 10.4.2]{Liu}
closely. (The claim is proved in \cite{BW} when $d=1$.)

Write $S_0=\Spec O_K$, $S=\Spec O_{K^{\nr}}$ and $X=\cC\times_{S_0}S$.
Denote by $X_s$ and $X_\eta$ the special fibre and the generic fibre of $X$, and 
$X_{\bar\eta}=X_\eta/K^s$.

Let $\Gamma_1,...,\Gamma_n$ be components of $X_s$, with multiplicities $d_1,...,d_n$, so that
$d=\gcd_i(d_i)$. Write $D$ for the free abelian group on the $D_i$ and $D^\vee$ for its dual,
with dual basis $\Gamma_1^*,...,\Gamma_n^*$. There are natural maps

\begin{tikzcd}[sep=6.8em]
  D \rar["\lambda","Z\mapsto \protect{[O_X(Z)]}"'] & \Pic X 
    \rar["\phi","\cL\mapsto \sum(\deg\cL|_{\Gamma_i})\Gamma_i^*"'] & D^\vee
\end{tikzcd}
\quad
\begin{tikzcd}[sep=6em]
  D \rar["\alpha=\phi\circ\lambda","Z\mapsto \sum(Z\cdot\Gamma_i)\Gamma_i^*"'] & D^\vee 
\end{tikzcd}

\smallskip
\noindent
with $\ker\alpha=d^{-1}X_s\Z$, and a commutative diagram with exact rows
\begin{equation}\label{commliu}
\begin{tikzcd}[sep=1.2em]
  \color{black}\ker s \dar[black,hook] \rar[black,hook] & \color{black}\lambda(D) \dar[black,hook] \rar[black,twoheadrightarrow] 
    & \dar[hook]\color{black}\alpha(D) \dar[black,hook] \\
  \ker\phi \dar[twoheadrightarrow]{s} \rar[hook] & \Pic X 
    \dar[twoheadrightarrow]\rar{\phi} & D^\vee\dar[twoheadrightarrow] \\
  \ker\psi \rar[hook] & \Pic X_\eta \rar{\psi} & \frac{D^\vee}{\alpha(D)}. \\
\end{tikzcd}
\end{equation}
Two bottom rows are as in \cite[Proof of 10.4.17]{Liu}, and the top row comes from 
the associated kernel-cokernel exact sequence, with zero cokernels.
As proved in \cite{Liu}, $\ker s=\ker(\phi: \lambda(D)\to\alpha(D))$ is killed by $d$
because $\ker\alpha=d^{-1}X_s\Z$ and $X_s\in\ker\lambda$. The multiplication by $l^n$
map applied to the left column of \eqref{commliu} gives a kernel-cokernel exact sequence
$$
  0 \lar \ker s[l^n] \lar \ker\phi[l^n] \lar \ker\psi[l^n] \lar \frac{\ker s}{l^n\ker s}.
$$
Now $\ker\phi[l^n]\iso \Pic X[l^n]$ from the middle row of \eqref{commliu}), 
as $D^\vee$ is torsion-free, and
$\Pic X[l^n]\iso \Pic X_s[l^n]$ by \cite[10/4.14-15]{Liu}. As for $\ker\psi$, 
from the bottom row of \eqref{commliu}
$$
  0 \lar \ker\psi[l^n] \lar \Pic X_\eta[l^n] \lar \frac{D^\vee}{\alpha(D)}[l^n] = \Phi[l^n],
$$
the last equality by \cite[10/4.12]{Liu}. Putting everything together, we get
\begin{equation}\label{piciso}
  \Pic X_s[l^n] \!\overarrow{\iso}\! \Pic X[l^n] \!\overarrow{\iso}\! \ker\phi[l^n] 
    \!\overarrow{(1)}\! \ker\psi[l^n]\>{\buildrel(2)\over\longinjects}\>\Pic X_\eta[l^n]
\end{equation}
with kernel and cokernel of (1) killed by $d$ and cokernel of (2) by $|\Phi|$.

Finally, $\Pic X_\eta=(\Pic X_{\bar\eta})^{I_K}$ from the exact sequence
(for any variety) 
$$
  H^1(G,K^s[X_{\bar\eta}]^\times) \lar \Pic X_\eta\lar (\Pic X_{\bar\eta})^{I_K} \lar 
  H^2(G,K^s[X_{\bar\eta}]^\times),
$$
see \cite[1.5.0, p.386]{CTS}. As $X_\eta$ is proper and geometrically integral,
$K^s[X_{\bar\eta}]^\times=(K^s)^\times$, so the first term is 0 by Hilbert 90, 
and the last one is the Brauer group, also 0 for a separably closed field.

This proves the first claim, and passing to the limit and tensoring with $\Q_l$ gives the other two;
also, $\cA_{\eta}[l^n]^{I_K}\iso \cA_s^0[l^n]$ by \cite[Lemma 2]{ST}.
\end{proof}

\expandafter\ifx\ACKN 1
\begin{acknowledgements}
I would like to thank Vladimir Dokchitser, Jack Thorne, Michael Harrison, Bartosz Naskrecki, 
Celine Maistret, Adam Morgan, David Holmes, Qing Liu, Raymond van Bommel, Natalia Afanassieva, 
Bjorn Poonen, Stefan Wewers, Christopher Doris, Michael Stoll, Dino Lorenzini,
Pip Goodman, Simone Muselli and the referee for their help with the paper.
\end{acknowledgements}
\fi

\let\thesection\oldthesection
\endsection

\sectionsstop

\renewcommand\baselinestretch{0.8}

\renewcommand\baselinestretch{1}

\marginpar{Arnold, Brieskorn, Laufer, Namikawa-Ueno [Ogg, Iitaka, Winters]}


\begin{thebibliography}{Xa} 

\fontfamily{kurierlc}\selectfont\smaller[1]

\def\btwo{\raise2pt\hbox{\smaller[2]2}}
\def\bthree{\raise2pt\hbox{\smaller[2]3}}

\bibitem[Ark]{Ark}
J.\,R.\,Arkinstall, Minimal requirements for Minkowski's theorem in the plane I,
  Bull. Aus. Math. Soc. 22 (1980), 259--274.

\bibitem[Bak]{Bak}
H.\,F.\,Baker, Examples of applications of Newton's polygon to the theory of singular 
points of algebraic functions, Trans. Cam. Phil. Soc. 15 (1893) 403--450.

\bibitem[Bat]{Bat}
V.\,Batyrev, Variations of the mixed Hodge structure of affine hypersurfaces in algebraic
tori, Duke Math. J. 69 (1993), no. 2, 349--409.

\bibitem[Bee]{Bee}
P.\,Beelen, A generalization of Baker's theorem, Fin. Fields and Appl. 15 (2009), 558--568.

\bibitem[BP]{BP}
P.\,Beelen, R.\,Pellikaan, The Newton polygon of plane curves with many rational
points, Des. Codes Cryptogr. 21 (2000), no. 1-3, 41--67.

\bibitem[BKK]{BKK}
D.\,N.\,Bernshtein, A.\,G.\,Kushnirenko, A.\,G.\,Khovanskii,
Newton polyhedra, Uspekhi Mat. Nauk, 31 Issue 3(189) (1976), 201--202.

\bibitem[BLR]{BLR}
S.\,Bosch, W.\,L\"utkebohmert, M.\,Raynaud, N\'eron Models, 
Erg. Math., vol. 21, Springer, Berlin, 1990.

\bibitem[BW]{BW}
I.\,Bouw, S.\,Wewers, Computing $L$-functions and semistable reduction 
of superelliptic curves, Glasgow Math. J. 59(1) (2017), 77-108.

\bibitem[Cas]{Cas}
W.\,Castryck, Smooth curves in toric surfaces, Hab. Dir. Rech., Univ. de Lille-1, 2017.

\bibitem[CDV]{CDV}
W.\,Castryck, J.\,Denef, F.\,Vercauteren, 
Computing zeta functions of nondegenerate curves, Int. Math. Res. Papers vol. 1 (2006), 1--57.

\bibitem[CV]{CV}
W.\,Castryck, J.\,Voight, On nondegeneracy of curves, Algebra \& Number Theory 3 (3), 255--281.

\bibitem[CES]{CES}
B.\,Conrad, B.\,Edixhoven, W.\,Stein, $J_1(p)$ has connected fibers, Doc. Math. 8 (2003),
331--408.

\bibitem[CTS]{CTS}
J.-L.\,Colliot-Th\'el\`ene, J.-J.\,Sansuc, La descente sur les vari\'et\'es rationelles, II,
Duke Math. J. 54, no 2 (1987), 375--492.

\bibitem[CM]{CM}
C.\,Curilla, J.\,S.\,M\"uller, The minimal regular model of a Fermat curve 
of odd squarefree exponent and its dualizing sheaf, arXiv:1605.04548.

\bibitem[DM]{DM}
P.\,Deligne, D.\,Mumford, The irreducibility of the space of curves of given genus,
Publ. Math. IH\'ES, Tome 36 (1969), 75--109.

\bibitem[DL]{DL}
J.\,Denef, F.\,Loeser, Weights of exponential sums, intersection cohomology,
and Newton polyhedra, Invent. Math. 106 (1991), 275--294.

\bibitem[D\btwo r]{tamroot}
T., V. Dokchitser, 
Regulator constants and the parity conjecture, Invent. Math. 178, no. 1 (2009), 23--71.

\bibitem[D\btwo h]{hq}
T., V. Dokchitser, 
Quotients of hyperelliptic curves and \'etale cohomology, to appear in Quarterly J. Math.

\bibitem[D\btwo t]{tate}
T., V.\,Dokchitser, A remark on Tate's algorithm and Kodaira types,
Acta Arith. 160 (2013), 95--100.

\bibitem[D\btwo w]{weil}
T., V. Dokchitser, Euler factors determine local Weil representations, 
Crelle 717 (2016), 35--46.


\bibitem[FLS\bthree W]{FLSSSW}
E.\,V.\,Flynn, F.\,Lepr\'evost, E.\,Schaefer, W.\,Stein, M.\,Stoll, J.\,Wetherell, 
Empirical evidence for the Birch and Swinnerton-Dyer conjectures for modular 
Jacobians of genus 2 curves, Math. Comp. 70 (2001), 1675--1697.

\bibitem[Ful]{Ful}
W.\,Fulton, Introduction to toric varieties, Princeton University Press, Princeton, 1993.

\bibitem[GRS\btwo]{GRSS}
R.\,Greenberg, K.\,Rubin, A.\,Silverberg, M.\,Stoll, On elliptic curves with an isogeny of 
degree 7, Amer. J. Math., 136(1) (2014), 77--109.

\bibitem[Hal]{Hal}
L.\,H.\,Halle, Stable reduction of curves and tame ramification, Math. Z. 265, 
issue 3 (2010), 529--550.

\bibitem[HW]{HW}
G.\,H.\,Hardy, E.\,M.\,Wright, An introduction to the theory of numbers, 4th ed., 
Oxford Univ. Press, 1960.

\bibitem[Har]{Har}
R.\,Hartshorne, Algebraic geometry, GTM 52, Springer-Verlag, 1977.

\bibitem[K\btwo MS]{KKMS}
G.\,Kempf, F.\,Knudsen, D.\,Mumford, B.\,Saint-Donat, 
Toroidal embeddings I, LNM 339, Springer, 1973.

\bibitem[Kho]{Khov}
A.\,G.\,Khovanskii, Newton polyhedra and toroidal varieties,
Funktsional. Anal. i Prilozhen. 11:4 (1977), 56--64; Funct. Anal. Appl. 11:4 (1977), 289--296.

\bibitem[Kis]{Kis}
M.\,Kisin, Local constancy in $p$-adic families of Galois representations,
Math. Z., 230 (1999), 569--593.

\bibitem[Kra]{Kra}
A.\,Kraus, Sur le d\'efaut de semi-stabilit\'e des courbes elliptiques
\`a r\'eduction additive, Manuscripta Math. 69 (1990), no. 4, 353--385.

\bibitem[Koe]{Koe}
R.\,Koelman, The number of moduli of families of curves on toric surfaces, PhD thesis,
KU Nijmegen, 1991.

\bibitem[Kou]{Kush}
A.\,G.\,Kouchnirenko, Poly\'edres de Newton et nombres de Milnor, Inv. Math. 32 (1976), 1--31.

\bibitem[KWZ]{KWZ}
A.\,Kresch, J.\,Wetherell, and M.\,Zieve, Curves of every genus with many points, I:
Abelian and toric families, J. Algebra 250 (2002), no. 1, 353--370.

\bibitem[Liu]{Liu}
Q.\,Liu, Algebraic geometry and arithmetic curves, Oxford Univ. Press, 2002.

\bibitem[LiM]{LiuM}
Q.\,Liu, Mod\'eles minimaux des courbes de genre deux, 
J. Reine Angew. Math. 453 (1994), 137--164.

\bibitem[LoD]{LorD} 
D.\,Lorenzini, Dual graphs of degenerating curves, Math. Ann. 287 (1990), 135--150.

\bibitem[LoM]{LorM} 
D.\,Lorenzini, Models of curves and wild ramification, 
Pure and Appl. Math. Quart. 6 (2010), 41--82.

\bibitem[M\btwo D\btwo]{M2D2}
T., V.\,Dokchitser, C.\,Maistret, A.\,Morgan, 
Arithmetic of hyperelliptic curves over local fields, in preparation.

\bibitem[McC]{McC}
W.\,G.\,McCallum, The degenerate fibre of the Fermat curve, in: Number theory related to
Fermat's last theorem (Cambridge, Mass., 1981). Progr. Math. 26. Birkh\"auser Boston, Mass.,
(1982), 57--70.

\bibitem[O'C]{O'C}
D.\,J.\,O'Connor, Pragmatic Paradoxes, Mind, 57 (1948), 358--359.

\bibitem[PSS]{PSS}
B.\,Poonen, E.\,F.\,Schaefer, M.\,Stoll, 
Twists of $X(7)$ and primitive solutions to $x^2+y^3=z^7$, 
Duke Math. J. 137, no. 1 (2007), 103--158.

\bibitem[Rab]{Rab}
S.\,Rabinowitz, A census of convex lattice polygons with at most one interior lattice point,
   Ars Comb. 28 (1989), 83--96.

\bibitem[Roh]{Roh}
D.\,Rohrlich, Galois Theory, elliptic curves, and root numbers,
Compos. Math. 100 (1996), 311--349.

\bibitem[Sai]{Saito}
T.\,Saito, Vanishing cycles and geometry of curves over a DVR, Amer.
J. Math. 109(6) (1987), 1043--1085.

\bibitem[SGA7$_{\rm I}$]{SGA7I}
A.\,Grothendieck, Mod\`eles de N\'eron et monodromie, LNM 288,
Springer, 1972.

\bibitem[Sil2]{Sil2}
J.\,H.\,Silverman, Advanced Topics in the Arithmetic of Elliptic Curves,
GTM 151, Springer-Verlag, 
1994.

\bibitem[ST]{ST}
J.-P.\,Serre, J.\,Tate, Good reduction of abelian varieties,
Annals of Math. 68 (1968), 492--517.

\bibitem[Szy]{Szy}
M.\,Szydlo, Elliptic fibers over non-perfect residue fields,
J. Number Theory 104, Issue 1 (2004), 75--99.

\bibitem[TaA]{TaA}
J.\,Tate, Algorithm for determining the type of a singular fiber in
an elliptic pencil, in: Modular Functions of One Variable I,
LNM 476, B. J. Birch and W. Kuyk, eds., Springer-Verlag,
Berlin, 1975, 33--52.

\bibitem[TaN]{TaN}
J. Tate, Number theoretic background, in: Automorphic forms,
representations and L-functions, Part 2
(ed. A. Borel and W. Casselman), Proc. Symp. in Pure Math.
33 (AMS, Providence, RI, 1979) 3-26.

\bibitem[Te]{Tev}
J. Tevelev, Compactifications of subvarieties of tori, Amer. J. Math. 
129 no 4. (2007), 1087--1104.

\bibitem[Vie]{Vie} 
E.\,Viehweg, Invarianten der degenerierten Fasern in lokalen Familien von Kurven, 
Crelle 293/294 (1977) 284--308.

\bibitem[Xi]{Xiao}
G.\,Xiao, On the stable reduction of pencils of curves, Math. Z. 203(3) (1990), 379--389.

\end{thebibliography}
\end{document}